\documentclass[a4paper,10pt]{article}
\usepackage{blindtext}
\usepackage{geometry}
\usepackage[utf8]{inputenc}
\usepackage[english]{babel}
\usepackage{amscd,amssymb,amsfonts,amsmath,amsthm}
\usepackage{bbm}
\usepackage{graphicx}
\usepackage{fancybox}
\usepackage{epic,eepic}
\usepackage{amstext}
\usepackage{mathtools}
\usepackage{hyperref}
\hypersetup{
    colorlinks=true,
    citecolor=red,
    linkcolor=blue,
    pdfborder={0 0 0}
}
\usepackage{esint}
\usepackage{subcaption}
\usepackage{enumitem}
\usepackage{color}
\usepackage{tikz}
\usepackage{pgfplots}
\usepackage{verbatim}
\usepackage{placeins}
\usepackage{soul}

\usepackage[noabbrev, capitalise, nameinlink]{cleveref}

\usepackage[resetlabels,labeled]{multibib}

\newtheorem{assumption}{Assumption}[section]
\newtheorem{theorem}{Theorem}[section]

\newtheorem{remark}{Remark}[section]

\newtheorem{proposition}{Proposition}[section]

\newcommand{\R}{\mathbb R}
\newcommand{\N}{\mathbb N}
\renewcommand{\P}{\mathbb P}
\newcommand{\B}{\mathbb B}
\newcommand{\T}{\mathcal T}

\newcommand{\Q}{\mathcal Q}

\newcommand{\Vb}{V_b}
\newcommand{\VbTh}{H^1(\T_h)}
\newcommand{\VbTL}{H^1(\T_L)}

\newcommand{\eremk}{\hbox{}\hfill\rule{1ex}{1ex}}

\newcommand{\norm}[2]{\left\lVert#1\right\rVert_{#2}}
\newcommand{\seminorm}[2]{\left |#1\right |_{#2}}

\newcommand{\vertiii}[1]{{\left\vert\kern-0.25ex\left\vert\kern-0.25ex\left\vert #1  \right\vert\kern-0.25ex\right\vert\kern-0.25ex\right\vert}}

\newcommand{\spE}[2]{\left(#1, #2\right)_{L^2(E)}}

\newcommand\ds{\,\text{d}s}

\newcommand\dx{\,\text{d}x}

\newcommand\mus{{\mu_s}}
\newcommand\mut{{\mu_t}}

\newcommand{\E}{\mathbb E}

\newcommand{\dP}{\mathrm{d}\mathbb P(\omega)}
\newcommand{\mc}[1]{E_{#1}}
\newcommand{\mlmc}{E^{L}}

\newcommand{\pa}[1]{\textcolor{blue}{#1}}
\newcommand{\todo}[1]{\textcolor{black}{#1}} 

\begin{document}

\title{A Multilevel Monte Carlo Virtual Element Method for Uncertainty Quantification of Elliptic Partial Differential Equations\thanks{This research has been partially funded by the European Union (ERC, NEMESIS, project number 101115663). Views and opinions expressed are, however, those of the author(s) only and do not necessarily reflect those of the European Union or the European Research Council Executive Agency. Neither the European Union nor the granting authority can be held responsible for them. This research was also funded in part by the Austrian Science Fund (FWF) 10.55776/F65. The present research is part of the activities of “Dipartimento di Eccellenza 2023-2027”. PA and MV also acknowledge MUR–PRIN/PNRR 2022 grant n. P2022BH5CB, funded by MUR. 
FB also acknowledges INdAM - GNCS Project CUP E53C24001950001.
PFA, FB and MV are members of INdAM-GNCS. }}

\author{Paola F. Antonietti$^a$, Francesca Bonizzoni$^a$, Ilaria Perugia$^b$, Marco Verani$^a$}

\date{}

\maketitle

\begin{center}
{\small $^a$ MOX, Dipartimento di Matematica, Politecnico di Milano, Milano, Italy\\
$^b$ Faculty of Mathematics, University of Vienna, Vienna, Austria}
\end{center}
\begin{center}
\today\\
\end{center}

\begin{abstract}
We introduce a Monte Carlo Virtual Element estimator based on Virtual Element discretizations for stochastic elliptic partial differential equations with random diffusion coefficients. We prove estimates for the statistical approximation error for both the solution and suitable linear quantities of interest. A Multilevel Monte Carlo Virtual Element method is also developed and analyzed to mitigate the computational cost of the plain Monte Carlo strategy. The proposed approach exploits the flexibility of the Virtual Element method on general polytopal meshes and employs sequences of coarser spaces constructed via mesh agglomeration, providing a practical realization of the multilevel hierarchy even in complex geometries. This strategy substantially reduces the number of samples required on the finest level to achieve a prescribed accuracy. We prove convergence of the multilevel method and analyze its computational complexity, showing that it yields significant cost reductions compared to standard Monte Carlo methods for a prescribed accuracy. Extensive numerical experiments support the theoretical results and demonstrate the efficiency of the proposed method.\\[0.5cm]
\end{abstract}

\noindent {\bf Keywords}: Multilevel Monte Carlo, Virtual Element Methods, Stochastic Partial Differential Equations, Uncertainty Quantification, Polytopal Meshes, Mesh Agglomeration\\[0.5cm]
\noindent {\bf AMS subject classification}: 65N30, 65N15, 60H15, 60H35, 65C30, 65C05


\section{Introduction}

Differential models with uncertain data are common in engineering, the social sciences, and the life sciences. Indeed, models’ input data, such as coefficients or the source term, are often uncertain due to, e.g., intrinsic variability or measurement errors. A typical approach to handling such uncertainties is to model the uncertain data as random fields and analyze statistical quantities associated with solutions to the resulting stochastic PDEs. These quantities often include the expected value of the solution or functionals (quantity of interest, QoI) that depend on it. The Monte Carlo (MC) method is widely used to approximate solutions to stochastic PDEs by solving the corresponding deterministic problem repeatedly with independent realizations of the random input and computing statistical measures. However, this method can be impractical when each deterministic problem is computationally expensive, since the error decreases at a rate of $O(M^{-1/2})$, where $M$ is the number of samples. 
To overcome this limitation, Multilevel Monte Carlo (MLMC) methods have been developed. The key idea is to combine many inexpensive coarse simulations with fewer costly fine ones, achieving an improved complexity--error balance.
Our approach builds on the MLMC framework for
infinite-dimensional integration introduced by Giles in the context of stochastic differential equations~\cite{Giles2008, giles2008multilevel}. It was then extended to elliptic PDEs with random coefficients in~\cite{BarthSchwabZollinger2011, Cliffe-Giles-Scheichl-Teckentrup, CharrierScheichlTeckentrup2013, TeckentrupScheichlGilesUllmann} and more recently in~\cite{BeckLiuVonSchwerinTempone2022, SchwabStein2024}.
For a review of MLMC and 
its variants, see~\cite{Giles_2015}.
The effectiveness of MLMC depends on the availability of a sequence of coarser discretizations to approximate the corresponding deterministic problem. For example, in the Finite Element method, a sequence of coarser meshes is required for practical MLMC implementation. In complex geometries, constructing such a sequence of coarser (simplicial or hexahedral) meshes can be challenging. On the other hand, mesh agglomeration, which merges groups of fine-mesh elements into larger cells of arbitrary polygonal or polyhedral shape, provides a robust, automatic way to generate necessary coarser problems \cite{ANTONIETTI2024,antonietti2025magnet, FEDER2025,Antonietti2026}. This has driven the development of discretization techniques capable of handling general polytopal meshes. Several polytopal methods that have been developed in the last decades are based on nonconforming approximation spaces, such as Discontinuous Galerkin (DG) methods~\cite{DiPietro-Ern,CangianiDongGeorgoulisHouston_2017} or Hybrid High-Order (HHO) methods~\cite{Cicuttin-Ern-Pignet,DiPietro-Droniou}. In contrast, the Virtual Element (VE) method relies on conforming approximation spaces. In
this paper, we focus on the 
VE
method that has been proposed in the seminal papers \cite{beirao2013basic,BBM_2013,beirao2014hitchhiker} and further successfully developed for a wide set of problems in computational sciences; see, e.g., the monographs~\cite{BBM_acta_2023} and~\cite{Antonietti2022VEM}, and the references therein. \\

In this work, we consider a stochastic elliptic PDE with random diffusion coefficients and aim to approximate 
both the expected value of the solution and that of a linear quantity of interest. We introduce and analyse a novel multilevel Monte Carlo Virtual Element (MLMC-VE) method for this purpose. VE methods allow, simultaneously, the natural construction of multilevel discretizations on general polytopal meshes required in the multilevel framework, and guarantee stability and approximation properties across levels. 
 Given the virtual nature of the basis functions, the design and analysis 
of MLMC-VE is far from being a straightforward extension of the FEM case, as it requires careful use of the projectors of the virtual functions in the polynomial space.

We first derive novel VE a priori error estimates for linear quantities of interest of the solution to deterministic elliptic problems. Building on this result, we present Monte Carlo Virtual Element (MC-VE) estimators for the expectation of linear quantities of interest of the solution to stochastic elliptic problems with a random diffusion coefficient. We prove error bounds for the MC-VE estimator, showing that
as expected, the MC convergence rate is $1/2$, so to halve the error, one should quadruple the sample size $M$. Given that, for each sample, the MC-VE method requires the solution of a VE problem, the overall computational complexity of the MC-VE method might become very large even for not too small error tolerances, as in the classical Monte Carlo setting.
Next, to reduce the computational complexity of the MC-VE method, we propose a multilevel Monte Carlo Virtual Element (MLMC-VE) that uses a hierarchy of VE discretizations. The MLMC-VE estimator significantly reduces the sample count on the finest meshes, thereby improving efficiency. Exploiting the linearity of the expectation, the method computes the difference between expectations of the solutions (or their QoI) at two consecutive levels of accuracy. The advantage lies in reducing problem variance, thereby decreasing the number of samples required at finer (and more expensive) levels. We prove convergence, discuss computational complexity, and show that it significantly decreases the computational cost to achieve a given accuracy compared to standard Monte Carlo methods.
Numerical experiments confirm the 
theoretical error estimates and show the
computational benefits of our approach. 
To the best of our knowledge, this is one of the first results that incorporate stochastic modeling techniques into the VE framework to address uncertainty and to develop stochastic virtual element methods, in which deterministic VE spaces are coupled with stochastic approximation spaces to represent random variables and stochastic fields.
In this direction, we also mention the recent paper~\cite{zheng_stochastic_2024}, which proposes stochastic VE methods for two- and three-dimensional linear elasticity problems.\\

The rest of the paper is organized as follows. In Section~\ref{sec:problem}, we introduce the model problem, namely an elliptic PDE with stochastic diffusion coefficient, together with the functional setting. In Section~\ref{sec:VEM}, we provide a brief overview of the conforming Virtual Element discretization of the corresponding deterministic model problem, and in
Section~\ref{sec:qoi_apriori}, we derive novel \emph{a priori} error estimates for a linear QoI. In Section~\ref{sec:mcvem}, we introduce the MC-VE method and derive estimates for the statistical approximation error for both the solution and the linear quantity of interest. The extension to the multilevel setting is addressed in Section~\ref{sec:MLMC-VE}, where we introduce and establish error estimates for the MLMC-VE estimators, for both the solution and the quantity of interest. Extensive numerical experiments are presented and discussed in Section~\ref{sec:numerics}. Finally, in Section~\ref{sec:conclusions}, we draw some conclusions and discuss possible future extensions.\\

\section{Problem setting}\label{sec:problem} 
In this section, we introduce the model problem considered in this work, namely, an elliptic partial differential equation with a stochastic diffusion coefficient. \\ 

Let $(\Omega, \mathcal F,\mathbb P)$ be a probability space, where $\Omega$ denotes the set of outcomes, $\mathcal F$ the $\sigma$-algebra of events, and $\mathbb P\colon\Omega\rightarrow[0,1]$ a probability measure. Moreover, let $D\subset \R^2$ be an open, bounded, convex polygonal domain with 
boundary $\partial D$. 
In this work, we focus on the following elliptic PDE with stochastic diffusion coefficient: given~$f\in L^2(D)$ and $a\colon\Omega\times D\rightarrow\R$, find $u\colon\Omega\times D\rightarrow\R$ such that, almost surely in $\Omega$,
\begin{gather}
\left\{\begin{array}{ll}
     -\nabla\cdot (a\nabla u)=f & \text{in } D, \\
     u=0& \text{on } \partial D. 
\end{array}\right.
    \label{eq:strong_pde}
\end{gather}

Given the realization $a(\omega,\cdot)$, the weak formulation of \eqref{eq:strong_pde} reads:  find $u(\omega,\cdot)\in V= H^1_0(D)$ such that, for all $v\in V$,
\begin{gather}
    \int_{D} a(\omega,\cdot)\nabla u(\omega,\cdot)\cdot \nabla v\, \dx
    =\int_{D} f\, v\, \dx,
    \label{eq:weak_pde}
\end{gather}
which is well-posed by the Lax-Milgram lemma, provided that $a(\omega,\cdot)$ is uniformly bounded below and above in $D$ by positive constants.
We make the following assumption.
\begin{assumption}
\label{ass:a}
   For almost all $\omega\in\Omega$, the realizations $a(\omega,\cdot)$ of the diffusion coefficient belong to $L^\infty(D)$ and satisfy
   \begin{gather*}
        0<a_{min}(\omega)\leq a(\omega,x)\leq a_{max}(\omega)<\infty\quad\mathrm{a.e.\ in\ }D,    
   \end{gather*}
   where 
   \[
   a_{min}(\omega)= \mathrm{ess\,inf}_{x\in D}a(\omega,x),\quad
   a_{max}(\omega)= \mathrm{ess\,sup}_{x\in D}a(\omega,x).
   \]
\end{assumption}
Note, in particular, that a random field $a(\omega,x)$ satisfying Assumption~\ref{ass:a} may not be uniformly bounded over $\Omega$.
Under \pa{A}ssumption~\ref{ass:a}, repeated application of the Lax-Milgram lemma yields the almost surely well-posedness of problem~\eqref{eq:strong_pde}, namely, for almost every~$\omega\in\Omega$, problem~\eqref{eq:weak_pde} admits a unique solution $u(\omega,\cdot)\in V$ and there exists a positive constant $C$ independent of $\omega$ such that
\[
\|u(\omega,\cdot)\|_{V}
\leq C a_{min}(\omega)^{-1}\|f\|_{L^2(D)}. 
\]
\begin{assumption}
\label{ass:a2}
  For any $p\geq 1$, $a_{min}^{-1}\in L^p(\Omega)$ and $a\in L^p(\Omega, C^0(\overline{D}))$.
\end{assumption}
Under \pa{A}ssumption~\ref{ass:a2}, it can be proved that $u\in L^p(\Omega,V)$ for any $p\geq 1$ and, for a constant $C>0$,
\begin{gather}
\label{eq:bound_u}
\|u\|_{L^p(\Omega,V)}\leq C \|a_{min}^{-1}\|_{L^p(\Omega)}\|f\|_{L^2(D)},
\end{gather}
where the Bochner norm is defined by
\[
\norm{\cdot}{L^p(\Omega,V)}= 
\left(\int_\Omega | \cdot|_{H^1(D)}^p \right)^{1/p}.
\]
Note that Assumption~\ref{ass:a2} holds, for example, for random fields~$a(\omega,x)$ that are uniformly bounded in $\Omega\times D$, as well as for log-normal random fields $a(\omega,x)=\mathrm{e}^{Y(\omega,x)}$ with $Y\colon\Omega\times\mathbb{R}$ mean-zero Gaussian random field fulfilling, for some $L,s>0$ and for all $x_1,\, x_2\in \overline{D}$,
\[
\E[|Y(\omega,x_1)-Y(\omega,x_2)|^2]\leq L \|x_1-x_2\|_2^s.
\]

\begin{remark}
For simplicity, we have assumed that the forcing term is deterministic. Nevertheless, the results mentioned above, as well as the forthcoming analysis, extend to the more general case where $f=f(\omega,x)$ is a sufficiently regular random field (namely, $f\in L^p(\Omega,L^2(D))$ for any $p>1$).
More general boundary conditions, other than homogeneous Dirichlet, can be handled with minor modifications. 
Moreover, the proposed methodology naturally extends to the three-dimensional case $D\subset\R^3$.
\eremk
\end{remark}


 The present work aims to approximate the expected value of the solution $u$ to problem~\eqref{eq:strong_pde}, i.e., $$\E[u]=\int_\Omega u(\omega,\cdot)\,\dP\in V$$
and the expected value of a quantity of interest $\mathcal Q\colon V\rightarrow\R$ of $u$, i.e., $$\E[\mathcal Q(u)]=\int_\Omega \mathcal Q(u(\omega,\cdot))\,\dP\in \R,$$ where we assume that $\Q:V\to \R$ is a linear operator defined, for a given $q\in L^2(D)$, as 
\begin{equation}\label{eq:q}
\Q(v)=\int_D q v.
\end{equation}

\section{The Virtual Element Method}\label{sec:VEM}

In this section, we provide a brief overview of the conforming virtual element~(VE) method and the main theoretical results that form the basis for the design and study of the Monte Carlo and Multilevel Monte Carlo VE discretizations introduced in this paper. Here and in the following, we will make use of the notation $\lesssim$ to denote a bound that holds up to a positive multiplicative constant $C$ that may vary at different occurrences, but it is independent of the mesh size, the polynomial degree, 
and the model's parameters.\\

We consider 
the following elliptic PDE with a \emph{deterministic} diffusion coefficient: given $f\colon D\rightarrow\R$ and $\alpha\colon D\rightarrow\R$, find $u\colon D\rightarrow\R$ such that
\begin{gather}
\left\{\begin{array}{ll}
     {-\nabla\cdot (\alpha\nabla u)=f} & {\text{in } D,}\\
     {u=0} & {\text{on } \partial D,}
\end{array}\right.\label{eq:strong_pde_det}
\end{gather} 
where we assume $f\in L^2(D)$ and  $\alpha$ uniformly bounded from above and below in $D$ by positive
constants being
\begin{gather}
    \label{eq:alpha_bounds}
   \alpha_{min}= \mathrm{ess\,inf}_{x\in D}\alpha(x),\quad
   \alpha_{max}= \mathrm{ess\,sup}_{x\in D} \alpha(x).
\end{gather}
The weak formulation of \eqref{eq:strong_pde_det}
reads as follows: find $u \in H^1_0(D)$ such that 
\begin{equation}\label{eq:cont_pbl}
    A(u,v)=F(v) \qquad\forall v\in H^1_0(D),
\end{equation}
where 
\begin{eqnarray}
    &&A(u,v):H^1_0(D) \times H^1_0(D) \to \R, \qquad A(u,v)=\int_D \alpha\, \nabla u \cdot \nabla v, \label{A:def}\\
    && F(v):H^1_0(D)\to \R, \qquad F(v)=\int_D fv.\label{F:def}
\end{eqnarray}

The remainder of this section is devoted to introduce the VE spaces, the bilinear forms, and the corresponding discrete problem.
\subsection{Virtual Element spaces}
\label{sec:VE_space}
Let $\T_h$ be a decomposition of $D$ 
into $n_P$ non-overlapping (open) polygons $E_\ell$ with flat faces, i.e., $\overline{D} 
=\cup_{\ell=1}^{n_P} \bar E_\ell$ with $E_\ell\cap E_{\ell'}=\emptyset$ for $\ell\neq\ell'$. Let $h_E=\operatorname{diam}(E)$ denote the diameter of the element $E$, and $h=\max_{E\in\T_h}h_E$ denote the meshsize. We assume that there exist constants $\gamma, c>0$ independent of $h$ and $E$ such that
\begin{itemize}
\item each element $E\in\T_h$ is star-shaped with respect to a ball of radius $\gamma\, h_E$, 
\item the distance between any two vertices of $E$ is larger than $c\, h_E$. 
\end{itemize}
We denote by~$\VbTh$ the broken $H^1$ space on $\T_h$: 
$$\VbTh=\{v\in L^2(D): v\vert_E\in H^1(E)\quad \forall E\in \T_h\}$$ 
and by~$\seminorm{\cdot}{\VbTh}^2=\sum_{E\in\T_h}\seminorm{\cdot}{H^1(E)}^2$ the broken $H^1$ seminorm.
Let $p\in\N$ be fixed.
For any 
$E\in\T_h$, we introduce the following notation:
\begin{enumerate}[label=(\roman*)]
\item
$\P_p(E)$ is the set of polynomials on $E$ of total degree less than or equal to~$p$;
\item
$\B(\partial E)=\{v\in C^0(\partial E)\ \textrm{s.t. }v|_e\in\P_p(e)\ \textrm{for all edges }e\subset\partial E\}$;
\item 
$\Pi_{p}^{\nabla,E}\colon H^1(E)\rightarrow \P_p(E)$ is the energy projection operator, which is defined, up to a constant, by
\begin{equation}
	\label{eq:PiNabla}
    \int_E \nabla(w-\Pi_{p}^{\nabla,E}w)\cdot \nabla q_p\dx=0
    \quad\forall\, q_p\in\P_p(E).
\end{equation}
To fix the constant and complete the definition 
of $\Pi_{p}^{\nabla,E}w$, we further require 
\begin{gather*}
{
\left\{\begin{array}{ll}
\displaystyle{\frac{1}{|\partial E|}\int_{\partial E} (w-\Pi_{p}^{\nabla,E}w)\, \ds =0}, & \text{ if }p=1,\\
\displaystyle{\frac{1}{|E|}\int_E (w-\Pi_{p}^{\nabla,E}w)\, \dx =0}, & \text{ if }p>1;
\end{array}\right.}
\end{gather*}
\item
$\Pi_p^{0,E}\colon L^2(E)\rightarrow \P_p(E)$ is the $L^2$-orthogonal projection operator defined by
\begin{equation}
	\label{eq:Pi0}
	\spE{q_p}{w-\Pi_{p}^{0,E}w}=0\quad\forall\, q_p\in\P_p(E).
\end{equation}
\end{enumerate}
Since~$\Pi_{p}^{\nabla,E}$ and~$\Pi_{p}^{0,E}$ provide the best approximation in~$\P_p(E)$ with respect to the~$H^1(E)$ seminorm and in the~$L^2(E)$ norm, respectively, from~\cite[Lemma~4.2]{BCMR:2016}, it follows that,
for all $u\in H^{s+1}(E)$, $s\ge 1$,
letting~$\mus=\min\{s,p\}$, 
\begin{align}
\label{eq:proj_approx_nabla_E}
    \seminorm{u-\Pi_{p}^{\nabla,E} u}{H^1(E)}
    &\lesssim 
    \frac{h_E^{\mus}}{p^s} 
    \norm{u}{H^{s+1}(E)},\\
    \label{eq:proj_approx_E}
    \norm{u-\Pi_{p}^{0,E} u}{L^2(E)}
    &\lesssim 
    \frac{h_E^{\mus+1}}{p^{s+1}}
    \norm{u}{H^{s+1}(E)}.
\end{align}
Moreover, the global operators $\Pi^\nabla_{h,p}$, $\Pi^0_{h,p}$ are defined as $\Pi^{(\cdot)}_{h,p}\vert_E=\Pi^{(\cdot),E}_{h,p}$ with $(\cdot)=\nabla,0$ and, for all $u\in H^{s+1}(D)$, $s\ge 1$, 
fulfill
\begin{align}
\label{eq:proj_approx_nabla}
    \seminorm{u-\Pi_{h,p}^{\nabla} u}{\VbTh}
    &\lesssim 
    \frac{h^{\mus}}{p^s} 
    \norm{u}{H^{s+1}(D)},
    \\
    \label{eq:proj_approx}
    \norm{u-\Pi_{h,p}^{0} u}{L^2(D)}
    &\lesssim 
    \frac{h^{\mus+1}}{p^{s+1}}
    \norm{u}{H^{s+1}(D)}.
\end{align}
Additionally, for all $u\in H^1(D)$, we have
\begin{equation}\label{eq:L2H1}
\norm{u-\Pi_{h,p}^{0} u}{L^2(D)}\lesssim \frac{h}{p} \seminorm{u}{H^{1}(D)}.
\end{equation}
Finally, from~\eqref{eq:PiNabla}, we have the following local and global stability properties:
\begin{gather}
    \label{eq:proj_stability_nabla}
    \seminorm{\Pi_{p}^{\nabla,E} u}{H^1(E)}
    \le \seminorm{u}{H^{1}(E)}, \qquad 
    \seminorm{\Pi_{h,p}^{\nabla} u}{\VbTh}
    \le  \seminorm{u}{H^{1}(D)}.
\end{gather}

We can now introduce the local enhanced VE space of order 
$p\geq 1$:
\begin{equation}
	\label{eq:VEM_space_local}
	W^E_{p}=\left\{ w\in V^E_{p}\ \textrm{s.t. }\spE{w-\Pi_{p}^{\nabla,E}w}{q}=0\ \textrm{for all }q\in\P_p(E)/\P_{p-2}(E)\right\},
\end{equation}
where $V^E_{p}$ denotes the local augmented VE space
\[
V^E_{p}=\left\{w\in H^1(E)\ \textrm{s.t. }w\in\B_p(\partial E)\ \textrm{and }\Delta w\in\P_p(E)\right\},
\]
and $\P_p(E)/\P_{p-2}(E)$ denotes the set of polynomials of total degree $p$ on $E$ that are $L^2$-orthogonal to all polynomials of total degree $p-2$ on $E$ (with the convention $\P_{-1}= \emptyset$).  Note, in particular, that $\P_p(E)\subset W^E_{h,p}(E)$.
The space $W^E_{h,p}$ is equipped with the following set of (local) degrees of freedom (DOFs):
\begin{itemize}
\item
nodal values at all $n_E$ vertices of the polygon $E$;
\item
nodal values at the $p-1$ {internal} Gauss-Lobatto quadrature points of each edge $e\in\partial E$;
\item 
(for $p\geq 2$) moments up to order $p-2$ in $E$, namely, for $w\in W^E_{p}$,
\[
\spE{w}{q_{p-2}}\quad\textrm{for all }q_{p-2}\in\P_{p-2}.
\]
\end{itemize}
We have that $\dim(W^E_{p})=n_E p + \frac{p(p-1)}{2}$. It is important to note that both the energy and the $L^2$-orthogonal projection operators are computable solely in terms of the DOFs defined above.
The global enhanced VE space of degree $p$ is defined by
\begin{equation}
	\label{eq:VEM_space_global}
	W_{h,p}=\left\{v\in H^1_0(D)\ \textrm{s.t. }v|_E\in W^E_{p}\ \textrm{for all } E\in\T_h \right\}.
\end{equation}
It is equipped with the following set of (global) DOFs:
\begin{itemize}
	\item
	nodal values at all $n_V$ vertices of $\T_h$;
	\item
	nodal values at the $p-1$ {internal} Gauss-Lobatto quadrature points of each of the $n_e$ edges of $\T_h$;
	\item (for $p\geq 2$) moments up to order $p-2$ in each of the $n_P$ polygons of $\T_h$, namely, for $w\in W_{h,p}$,
	\[
	\spE{w}{q_{p-2}}\quad\textrm{for all }q_{p-2}\in \P_{p-2},\ \textrm{for all } E\in \T_h,
	\]
\end{itemize}
and it has dimension {$N_{h,p}=\dim(W_{h,p})=n_V + (p-1)n_e+n_P\frac{p(p-1)}{2}$}. 
It was shown in~\cite[Corollary~2.3]{CGMMV:2020} that, for any 
$w\in H^{s+1}(D)$, $s\ge 1$, there exists a VE function~$w_I\in W_{h,p}$ such that
\begin{equation}
	\label{eq:inter-estimate}
    \seminorm{w-w_I}{H^1(D)}
	\lesssim \frac{h^{\mus}}{p^{s-1}}\norm{w}{H^{s+1}(D)},
\end{equation}
where 
we recall the definition~$\mus=\min\{s,p\}$. 
\begin{remark}
We point out that, although the above result \eqref{eq:inter-estimate} is expected to be valid with~$p^s$ instead of~$p^{s-1}$, in the following, we use~\eqref{eq:inter-estimate}, which is the current state of the art.
\eremk
\end{remark}

\subsection{Virtual Element bilinear forms}

Let $A^E(u,v)=\int_E \alpha(x) \nabla u \cdot \nabla v \dx$.
Given an arbitrary pair of VE functions $v_{h,p},\, w_{h,p}\in W^E_{p}$, the quantity $A^E(v_{h,p},w_{h,p})=\int_E \alpha\, \nabla v_{h,p} \cdot \nabla w_{h,p}$ is not computable in terms of the VE DOFs. Thus, we introduce the computable approximation $A^E_h\colon W^E_{p}\times W^E_{p}\rightarrow\R$, given by
\begin{gather}
	\label{eq:VEM_bilinear_forms}
	\begin{aligned}
		A^E_h(v_{h,p},w_{h,p})&= A^E(\Pi_p^{\nabla,E}v_{h,p},\Pi_p^{\nabla,E}w_{h,p})
			+  S^E\left((Id-\Pi_p^{\nabla,E})v_h,(Id-\Pi_p^{\nabla,E})w_h \right),
	\end{aligned}
\end{gather}
 where~$S^E\colon W^E_{p}\times W^E_{p}\rightarrow\R$ denotes a computable, symmetric, stabilizing bilinear form satisfying, for all $w_h\in W^E_{p}$ with $\Pi_p^{\nabla,E}w_{h,p}=0$,
\begin{gather}
	\begin{aligned} \label{eq:equiv}
		s_\star(p) A^E(w_{h,p},w_{h,p})\lesssim 
        S^E(w_{h,p},w_{h,p}) \lesssim s^\star(p)A^E(w_{h,p},w_{h,p})
	\end{aligned}
\end{gather} 
for some positive constants~$s_\star(p),s^\star(p)$, with~$s^\star(p)\ge 1$, which may depend only on~$p$.

\begin{remark}
In our numerical experiments, we 
define 
the stabilizing form $S^E$ 
as follows:
\begin{gather*}
	\begin{aligned}
S^E (v_{h,p},w_{h,p})&=
	\alpha_E\sum_{r=1}^{N_E} \textrm{DOF}_r\Big( (\textrm{Id}-\Pi_p^{\nabla,E})v_{h,p} \Big)
	\textrm{DOF}_r\Big( (\textrm{Id}-\Pi_p^{\nabla,E})w_{h,p} \Big),\\
	\end{aligned}
\end{gather*}
where~$\alpha_E=\frac{1}{\vert E\vert }\int_E \alpha \dx$,$N_E=\dim(W^E_{h,p})$, and $\{\textrm{DOF}_r\}_{r=1}^{N_E}$ denotes the set of local DOFs introduced in Section~\ref{sec:VE_space}. For~$\alpha\in W^{1,\infty}(D)$ and sufficiently fine meshes, $S^E$ satisfies~\eqref{eq:equiv}.
\eremk
\end{remark}

The global VE bilinear form $A_h\colon W_{h,p}\times W_{h,p}\rightarrow\R$ is then defined, for all $v_{h,p},\, w_{h,p}\in W_{h,p}$, as
\begin{gather*}
	A_h(v_{h,p},w_{h,p})=\sum_{E\in\T_h} A^E_h(v_{h,p},w_{h,p}).
\end{gather*}
The global VE bilinear form is continuous and coercive: for all~$v_{h,p},\, w_{h,p}\in W_{h,p}$, 
\begin{gather*}
	\label{eq:cont_VEM_forms}
	\begin{aligned}
	A_h(v_{h,p},w_{h,p})&\lesssim s^\star(p)\alpha_{max}\seminorm{v_{h,p}}{H^1(D)}\seminorm{w_{h,p}}{H^1(D)},\\
    A_h(v_{h,p},v_{h,p})&\gtrsim s_\star(p)\alpha_{min}\seminorm{v_{h,p}}{H^1(D)}^2.
	\end{aligned}
\end{gather*}

\subsection{The discrete problem}\label{sec:discreteproblem}

The VE discretization of problem~\eqref{eq:strong_pde_det} reads as follows: find $u_{h,p}\in W_{h,p}$ such that 
\begin{equation}\label{pb: vem}
    A_h(u_{h,p},v_{h,p})=F_h(v_{h,p}) \qquad \forall v_{h,p}\in W_{h,p},
\end{equation}
where 
$$
F_h(v_{h,p})
=\sum_{E\in \T_h}\int_E \Big(\Pi^{0,E}_p f\Big)\, v_{h,p} 
=\sum_{E\in \T_h}\int_E  \Big(\Pi^{0,E}_p f\Big)\, \Big(\Pi^{0,E}_p v_{h,p}\Big). 
$$  
From the stability of the~$L^2$ projection in the~$L^2$ norm and the Poincar\'e inequality, for all~$v_{h,p}\in W_{h,p}$, we have
\[
|F_h(v_{h,p})|\lesssim \norm{f}{L^2(D)}\seminorm{v_{h,p}}{H^1(D)},
\]
which, together with the coercivity of~$A_h$, implies
\begin{equation}\label{eq:H1}
\seminorm{u_{h,p}}{H^1(D)}\lesssim s_\star(p)^{-1}\alpha_{min}^{-1}\norm{f}{L^2(D)},
\end{equation}
Then, the discrete problem~\eqref{pb: vem} is well-posed. Moreover, if $u\in H^{s+1}(D)$, $s\ge 1$, 
the following error estimates has been proven, e.g., in~\cite[Theorem~4.1, Remark~4.4, and Remark~4.5]{BCMR:2016}: 
\begin{equation}\label{vem:estimate}
\begin{split}
    |u-u_{h,p}|_{H^1(D)} & \lesssim \frac{s^\star(p)}{s_\star(p)}\frac{\alpha_{max}}{\alpha_{min}}
    \frac{h^{\mus}}{p^s}
    \|u\|_{H^{s+1}(D)},\\
    \|u-u_{h,p}\|_{L^2(D)} &\lesssim \frac{s^\star(p)}{s_\star(p)}\frac{\alpha_{max}}{\alpha_{min}}
    \frac{h^{\mus+1}}{p^{s+1}}
    \|u\|_{H^{s+1}(D)},
    \end{split}
\end{equation}
with~$\mus=\min\{s,p\}.$

\section{A priori Virtual Element error estimates for a linear quantity of interest}
\label{sec:qoi_apriori}

In this section, we present novel \emph{a priori} VE error estimates for a linear QoI of the solution in the deterministic setting.\\

Recalling that $V=H^1_0(D)$, and $\Q:V\to \R$ is the linear operator defined in~\eqref{eq:q}, we are interested in estimating $\Q (u)-\Q(u_{h,p})$, 
where $u\in V$ is the solution of~\eqref{eq:cont_pbl},
and $u_{h,p}\in W_{h,p}$ is the VE approximation to $u$, i.e., it satisfies~\eqref{pb: vem}.
For~$v_{h,p}\in W_{h,p}$, let us define 
    \begin{equation}\label{QoI:discrete}
    \Q_h(v_{h,p})=\int_D (\Pi^0_{p,h} q) \, v_{h,p},  
    \end{equation}
 which, unlike~$\Q(u_{h,p})$, is a computable quantity, since it holds
  \begin{equation}
    \label{eq:Qh_identity}
     \Q_h(v_{h,p}) = \int_D  (\Pi^0_{p,h} q)\, (\Pi^0_{p,h} v_{h,p})
     = \Q(\Pi^0_{p,h} v_{h,p}).  
  \end{equation}
The following result is an extension of $L^2$-error estimates for virtual elements, cf.  \cite[Section 3.4]{Brenner:2017}.

\begin{theorem}
\label{thm:qoi_apriori}
Let $z$ be the solution of the auxiliary 
problem 
\begin{equation}\label{pb:adjoint}
    -\nabla\cdot(\alpha \nabla z)=q\quad \textrm{in~}D, \qquad z=0 \quad\textrm{on~}\partial D.
\end{equation}
Assume that 
$f\in H^{s-1}(D)$, $q\in H^{t-1}(D)$, 
$u\in H^{s+1}(D)$, $z\in H^{t+1}(D)$, $s,t\ge 1$, 
and
\begin{gather}
    \label{eq:bounds_uz}
    \norm{z}{H^{t+1}(D)}\lesssim\alpha_{min}^{-1}\norm{q}{H^{t-1}(D)}, \quad
    \norm{u}{H^{s+1}(D)}\lesssim\alpha_{min}^{-1}\norm{f}{H^{s-1}(D)}.    
    \end{gather}
Then, setting~$\mu=\min\{s,t,p\}$, we have 
\begin{align}
    |\Q(u-u_{h,p})|&\lesssim 
\frac{s^\star(p)}{s_\star(p)}\left(\frac{\alpha_{max}}{\alpha_{min}}\right)^2 \alpha_{min}^{-1}
\frac{h^{2\mu}}{p^{2\,\min\{s,t-1\}}}
\left(\norm{q}{H^{t-1}(D)}^2+\norm{f}{H^{s-1}(D)}^2\right),
      \label{Qoi:error:1}\\
    |\Q(u)-\Q_h (u_{h,p})|&\lesssim
        \frac{s^\star(p)}{s_\star(p)}\left(\frac{\alpha_{max}}{\alpha_{min}}\right)^2 \alpha_{min}^{-1}
\frac{h^{2\mu}}{p^{2\,\min\{s,t-1\}}}
\left(\norm{q}{H^{t-1}(D)}^2+\norm{f}{H^{s-1}(D)}^2\right).
        \label{Qoi:error:2}
    \end{align}
\end{theorem}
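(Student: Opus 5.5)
The plan is a duality (Aubin--Nitsche type) argument in the style of \cite[Section 3.4]{Brenner:2017}, using the dual solution $z$ of \eqref{pb:adjoint}. Since $\Q(u-u_{h,p})=\int_D q\,(u-u_{h,p}) = A(u-u_{h,p},z)$, subtracting any discrete test function is allowed. I will take $z_I\in W_{h,p}$ to be the interpolant of $z$ given by \eqref{inter-estimate}. By \eqref{eq:cont_pbl} and \eqref{pb: vem} we have $A(u,z_I)=F(z_I)$ and $A_h(u_{h,p},z_I)=F_h(z_I)$. This yields the splitting
\begin{equation*}
\Q(u-u_{h,p}) = A(u-u_{h,p},z-z_I) + \big[F(z_I)-F_h(z_I)\big] + \big[A_h(u_{h,p},z_I)-A(u_{h,p},z_I)\big].
\end{equation*}

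The three terms are bounded as follows.
\begin{enumerate}
\item[(i)] For the first term, continuity of $A$, the $H^1$ estimate \eqref{vem:estimate} and \eqref{eq:inter-estimate} give a bound $\lesssim \alpha_{max}\frac{s^\star}{s_\star}\frac{\alpha_{max}}{\alpha_{min}}\frac{h^{\mu_s}}{p^s}\frac{h^{\mu_t}}{p^{t-1}}\|u\|_{H^{s+1}}\|z\|_{H^{t+1}}$.
\item[(ii)] For the second term, write $F(z_I)-F_h(z_I)=\int_D (f-\Pi^0_{h,p}f)(z_I-\Pi^0_{h,p}z_I)$. Bound $\|f-\Pi^0 f\|_{L^2}\lesssim (h/p)^{\min\{s-1,p+1\}}\|f\|_{H^{s-1}}$, and bound $\|z_I-\Pi^0 z_I\|$ by inserting $z$ and using \eqref{eq:L2H1} together with \eqref{eq:proj_approx}.
\item[(iii)] The third term is the VE consistency error. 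Insert $\Pi^\nabla_{h,p}u$ and $\Pi^\nabla_{h,p}z$ elementwise and use polynomial consistency $A^E_h(q_p,v)=A^E(q_p,v)$. For variable $\alpha$ this identity holds only up to an $\alpha$-perturbation, so I would treat the coefficient as in \cite{BCMR:2016}, with the error absorbed into the constants. This reduces the term to
\begin{equation*}
\sum_E \Big[A_h^E(u_{h,p}-\Pi^\nabla u,\, z_I-\Pi^\nabla z)-A^E(u_{h,p}-\Pi^\nabla u,\, z_I-\Pi^\nabla z)\Big].
\end{equation*}
This is bounded by $s^\star\alpha_{max}$ times the product of broken seminorms. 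Each factor is controlled through the triangle inequality by \eqref{vem:estimate}, \eqref{eq:proj_approx_nabla} and \eqref{eq:inter-estimate}.
\end{enumerate}

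Each term is thus a product of an $h^{\mu_s}$-type factor and an $h^{\mu_t}$-type factor, and both are at least $h^{\mu}$. The $p$-powers are at worst $p^{-s}p^{-(t-1)}\le p^{-2\min\{s,t-1\}}$. Next I substitute \eqref{eq:bounds_uz} for $\|u\|_{H^{s+1}}$ and $\|z\|_{H^{t+1}}$. Collecting $\alpha$-factors (one $\alpha_{max}$ from continuity, one $\frac{\alpha_{max}}{\alpha_{min}}$ from \eqref{vem:estimate}, and $\alpha_{min}^{-2}$ from the regularity bounds) leaves a factor of the stated form $\left(\frac{\alpha_{max}}{\alpha_{min}}\right)^2\alpha_{min}^{-1}$, up to harmless rearrangement using $\alpha_{max}\ge\alpha_{min}$. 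Finally, $ab\le a^2+b^2$ converts $\|f\|\,\|q\|$ into the sum of squares in \eqref{Qoi:error:1}.

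For \eqref{Qoi:error:2}, write $\Q(u)-\Q_h(u_{h,p})=\Q(u-u_{h,p})+\int_D (q-\Pi^0 q)(u_{h,p}-\Pi^0 u_{h,p})$, which follows from \eqref{eq:Qh_identity} and $L^2$-orthogonality. For the extra term, insert $u$: we have $\|u_{h,p}-\Pi^0u_{h,p}\|\le\|u-\Pi^0u\|+\|u-u_{h,p}\|$, and both are $\lesssim \frac{s^\star}{s_\star}\frac{\alpha_{max}}{\alpha_{min}}(h/p)^{\mu_s+1}\|u\|_{H^{s+1}}$ by \eqref{eq:proj_approx} and \eqref{vem:estimate}. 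Multiply by $\|q-\Pi^0q\|\lesssim (h/p)^{\min\{t-1,p+1\}}\|q\|_{H^{t-1}}$ and conclude as before.

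I expect the main obstacle to be the bookkeeping of exponents in step (ii) and in the extra $\Q_h$ term. There the data only lie in $H^{s-1}$ and $H^{t-1}$, so each projection factor loses one power of $h$, which must be compensated by the extra $h$ gained from the $L^2$ projection of the other factor. Verifying that every combination still yields at least $h^{2\mu}$ and at most the $p$-loss $p^{-2\min\{s,t-1\}}$ is tedious but routine. The handling of variable $\alpha$ in the consistency term is the other delicate point, and I would quote \cite{BCMR:2016} for it.
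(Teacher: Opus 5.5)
This is essentially the paper's proof. It uses the same duality argument with $z_I$ from \eqref{eq:inter-estimate}, the same data-oscillation term $F(z_I)-F_h(z_I)$, and the same handling of $\Q(u)-\Q_h(u_{h,p})$. Once $F(z_I)-F_h(z_I)$ is split off, the paper's term $A(z_I,u-u_{h,p})$ and your consistency term both reduce to $\sum_E (S^E-A^E)\big((I-\Pi^{\nabla,E}_p)u_{h,p},(I-\Pi^{\nabla,E}_p)z_I\big)$. You bound this by products, whereas the paper uses \eqref{eq:new1}--\eqref{eq:new2} and Young's inequality.

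The one step that does not work as you describe it is absorbing the variable-$\alpha$ defect into the constants. For example, with $\bar\alpha_E$ the mean of $\alpha$ on $E$, $A_h^E(\Pi^{\nabla,E}_p u,z_I)-A^E(\Pi^{\nabla,E}_p u,z_I)=\int_E(\alpha-\bar\alpha_E)\nabla\Pi^{\nabla,E}_p u\cdot\nabla(\Pi^{\nabla,E}_p z_I-z_I)$. This gains only one power of $h$ from the coefficient, so such terms are in general $O(h^{1+\mu})$ and fall short of $h^{2\mu}$ once $\mu\ge 2$.

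The paper does not resolve this either. It tacitly uses $A^E(z_I,\Pi^{\nabla,E}_p u_{h,p})=A^E(\Pi^{\nabla,E}_p z_I,\Pi^{\nabla,E}_p u_{h,p})$, which amounts to assuming $\alpha$ is constant on each element. Under that assumption your argument goes through exactly as the paper's does.
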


\begin{proof}
    Let us set~$\mus=\min\{s,p\}$ and~$\mut=\min\{t,p\}$. We denote by $z_I$ the VE function in~$W_{h,p}$ corresponding to~$z\in H^{t+1}(D)$ as in~\eqref{eq:inter-estimate},
    for which there holds 
    $|z-z_I|_{H^1(D)}\lesssim \frac{h^{\mut}}{p^{t-1}}\norm{z}{H^{t+1}(D)}$.
    Using the auxiliary 
    problem \eqref{pb:adjoint}, we have
    \begin{eqnarray}
        |\Q(u-u_{h,p})|=|A(z,u-u_{h,p})|
        &\le& |A(z-z_I,u-u_{h,p})|+|A(z_I,u-u_{h,p})|\\&=:&(I)+(II).
    \end{eqnarray}
    By the continuity of $A$, \eqref{vem:estimate}, \eqref{eq:inter-estimate}, and~\eqref{eq:bounds_uz},
    there holds 
    \begin{gather*}
        \begin{aligned}
        (I)&\leq \alpha_{max}|u-u_{h,p}|_{H^1(D)}|z-z_I|_{H^1(D)}
        \lesssim \frac{s^\star(p)}{s_\star(p)}\frac{\alpha_{max}^2}{\alpha_{min}}
        \frac{h^{\mus+\mut}}{p^{s+t-1}} \norm{u}{H^{s+1}(D)} \norm{z}{H^
        {t+1}(D)}\\
        &\lesssim \frac{s^\star(p)}{s_\star(p)}\left(\frac{\alpha_{max}}{\alpha_{min}}\right)^2\alpha_{min}^{-1}
   \, \frac{h^{\mus+\mut}}{p^{s+t-1}} 
        \norm{f}{H^{s-1}(D)} \norm{q}{H^{t-1}(D)}. 
        \end{aligned}
    \end{gather*}
    For the second term, we write
$$
(II)\le \Big|\sum_E A^E(z_I-\Pi^{\nabla,E}_{p}z_I,u-u_{h,p})\Big|+\Big|\sum_E A^E(\Pi^{\nabla,E}_{p}z_I,u-u_{h,p})\Big|=: (III) + (IV),
$$
and estimate~$(III)$ and~$(IV)$ separately. Before doing so, we prove that
\begin{align}
\label{eq:new1}
\sum_E \seminorm{z_I-\Pi^{\nabla,E}_{p}z_I}{H^1(E)}^2 & \lesssim
\frac{h^{2\mut}}{p^{2 t-2}}\norm{z}{H^{t+1}(D)}^2,\\
\label{eq:new2}
\sum_E \seminorm{u_{h,p}-\Pi^{\nabla,E}_{p}u_{h,p}}{H^1(E)}^2 & \lesssim
\frac{h^{2\mus}}{p^{2s}}\norm{u}{H^{s+1}(D)}^2
\end{align}
To prove~\eqref{eq:new1}, we use the triangle inequality, the stability of~$\Pi^{\nabla,E}_{p}$ in~\eqref{eq:proj_stability_nabla}, and the estimates~\eqref{eq:inter-estimate} and~\eqref{eq:proj_approx_nabla}:
   \begin{gather*}
        \begin{aligned}
\sum_E \seminorm{z_I-\Pi^{\nabla,E}_{p}z_I}{H^1(E)}^2 & \le \sum_E \left(\seminorm{z_I-z}{H^1(E)}+\seminorm{z-\Pi^{\nabla,E}_{p}z}{H^1(E)}+\seminorm{\Pi^{\nabla,E}_{p}(z-z_I)}{H^1(E)}\right)^2\\
&\lesssim \sum_E \left(\seminorm{z_I-z}{H^1(E)}+ \seminorm{z-\Pi^{\nabla,E}_{p}z}{H^1(E)}\right)^2\\
&\lesssim \left(\seminorm{z_I-z}{H^1(D)}^2+ \seminorm{z-\Pi^{\nabla,E}_{p}z}{H^1(\T_h)}^2\right)\\
&\lesssim \frac{h^{2\mut}}{p^{2 t-2}}\norm{z}{H^{t+1}(D)}^2.
       \end{aligned}
    \end{gather*}  
The proof of~\eqref{eq:new2} follows the same lines, replacing~$z_I$ by~$u_{h,p}$, $z$ by~$u$, and using~\eqref{vem:estimate} instead of~\eqref{eq:inter-estimate}.    
We now estimate~$(III)$.
Using the continuity of~$A^E$, the discrete Cauchy-Schwarz inequality, and the estimates~\eqref{eq:new1} and~\eqref{vem:estimate}, we obtain
\begin{align*}
(III) & \le \alpha_{max}\sum_E  \seminorm{z_I-\Pi^{\nabla,E}_{p}z_I}{H^1(E)}\seminorm{u-u_{h,p}}{H^1(E)}\\
&\le \alpha_{max}\Big(\sum_E  \seminorm{z_I-\Pi^{\nabla,E}_{p}z_I}{H^1(E)}^2\Big)^{1/2} \seminorm{u-u_{h,p}}{H^1(D)}\\
&\lesssim \frac{s^\star(p)}{s_\star(p)}\frac{\alpha_{max}^2}{\alpha_{min}}
\frac{h^{\mus+\mut}}{p^{s+t-1}}\norm{z}{H^{t+1}(D)}
\norm{u}{H^{s+1}(D)}\\
&\lesssim \frac{s^\star(p)}{s_\star(p)}\left(\frac{\alpha_{max}}{\alpha_{min}}\right)^2 \alpha_{min}^{-1}
\frac{h^{\mus+\mut}}{p^{s+t-1}}\norm{q}{H^{t-1}(D)}
\norm{f}{H^{s-1}(D)},
\end{align*}
where, in the last step, we used~\eqref{eq:bounds_uz}.
In order to estimate~$(IV)$, let us preliminary observe that there holds
\begin{equation*} 
\begin{aligned}
         A^E(z_I,\Pi^{\nabla,E}_{p}u_{h,p})&=A^E(\Pi^{\nabla,E}_{p}z_I,\Pi^{\nabla,E}_{p} u_{h,p})
         \\
         &=A_h^E(z_I,u_{h,p}) - 
         S^E((I-\Pi^{\nabla,E}_{p})z_I,(I-\Pi^{\nabla,E}_{p})u_{h,p}),\\
    \end{aligned}
\end{equation*}
    where we employed the orthogonality property of $\Pi^{\nabla,E}_{p}$ and the definition of~$A^E_h$. 
Hence, we have
\begin{equation*} 
\begin{aligned}
&(IV)=\Big|\sum_E A^E(z_I,\Pi^{\nabla,E}_{p}(u-u_{h,p}))\Big|\\
&\qquad =\Big|\sum_E \left(A^E(z_I,\Pi^{\nabla,E}_{p}u)-A_h^E(z_I,u_{h,p}) + 
S^E((I-\Pi^{\nabla,E}_{p})z_I,(I-\Pi^{\nabla,E}_{p})u_{h,p})\right)\Big| 
\\
&\qquad =\Big|\sum_E \left(A^E(z_I,\Pi^{\nabla,E}_{p}u-u)+
S^E((I-\Pi^{\nabla,E}_{p})z_I,(I-\Pi^{\nabla,E}_{p})u_{h,p})\right)\\
& \qquad\qquad +F(z_I)-F_h(z_I)\Big|\\
&\qquad \le \Big|\sum_E A^E(z_I - \Pi^{\nabla,E}_{p} z_I ,\Pi^{\nabla,E}_{p}u - u)\Big|+ \Big|\sum_E 
S^E((I-\Pi^{\nabla,E}_{p})z_I,(I-\Pi^{\nabla,E}_{p})u_{h,p})\Big| \\
& \qquad\qquad+|F(z_I)-F_h(z_I)|.
    \end{aligned}
\end{equation*}
Since 
  \begin{gather*}
        \begin{aligned}
A^E(z_I - \Pi^{\nabla,E}_{p} z_I ,\Pi^{\nabla,E}_{p}u - u)
&\le \alpha_{max}\seminorm{z_I - \Pi^{\nabla,E}_{p} z_I}{H^1(E)}\seminorm{\Pi^{\nabla,E}_{p}u - u}{H^1(E)}\\
&\lesssim \alpha_{max} \seminorm{z_I - \Pi^{\nabla,E}_{p} z_I}{H^1(E)}^2
+\alpha_{max} \seminorm{\Pi^{\nabla,E}_{p}u - u}{H^1(E)}^2,
    \end{aligned}
    \end{gather*} 
using~\eqref{eq:new1} and the estimate in~\eqref{eq:proj_approx_nabla}, we obtain 
  \begin{gather*}
        \begin{aligned}
\Big|\sum_E A^E(z_I &- \Pi^{\nabla,E}_{p} z_I ,\Pi^{\nabla,E}_{p}u - u)\Big|\\
\lesssim &\alpha_{max}\frac{h^{2\mut}}{p^{2t-2}}\norm{z}{H^{t+1}(D)}^2+\alpha_{max} \frac{h^{2\mus}}{p^{2 s}}\norm{u}{H^{s+1}(D)}^2\\
\lesssim &\frac{\alpha_{max}}{\alpha_{min}}\alpha_{min}^{-1}\frac{h^{2\,\min\{\mus,\mut\}}}{p^{2\,\min\{s,t-1\}}}
\left(\norm{q}{H^{t-1}(D)}^2+\norm{f}{H^{s-1}(D)}^2\right),
    \end{aligned}
    \end{gather*}  
    where, in the last step, we used again~\eqref{eq:bounds_uz}.
 Furthermore, the bilinearity and symmetry of~$S^E$, together with the equivalence~\eqref{eq:equiv}, imply
  \begin{gather*}
        \begin{aligned}
&S^E((I-\Pi^{\nabla,E}_{p})z_I,(I-\Pi^{\nabla,E}_{p})u_{h,p})\\ 
&=\frac{1}{2}S^E\left((I-\Pi^{\nabla,E}_{p})z_I+(I-\Pi^{\nabla,E}_{p})u_{h,p},(I-\Pi^{\nabla,E}_{p})z_I+(I-\Pi^{\nabla,E}_{p})u_{h,p}\right)\\
&\quad -\frac{1}{2}S^E((I-\Pi^{\nabla,E}_{p})z_I,(I-\Pi^{\nabla,E}_{p})z_I)
 -\frac{1}{2}S^E((I-\Pi^{\nabla,E}_{p})u_{h,p},(I-\Pi^{\nabla,E}_{p})u_{h,p})\\
&\lesssim s^\star(p)A^E\left((I-\Pi^{\nabla,E}_{p})z_I+(I-\Pi^{\nabla,E}_{p})u_{h,p},(I-\Pi^{\nabla,E}_{p})z_I+(I-\Pi^{\nabla,E}_{p})u_{h,p}\right)\\
 &\le s^\star(p)\alpha_{max}\seminorm{(I-\Pi^{\nabla,E}_{p})z_I+(I-\Pi^{\nabla,E}_{p})u_{h,p}}{H^1(E)}^2\\
 &\lesssim  s^\star(p)\alpha_{max} \seminorm{(I-\Pi^{\nabla,E}_{p})z_I}{H^1(E)}^2
 +s^\star(p)\alpha_{max} \seminorm{(I-\Pi^{\nabla,E}_{p})u_{h,p}}{H^1(E)}^2.
      \end{aligned}
    \end{gather*} 
Therefore, using~\eqref{eq:new1} and~\eqref{eq:new2}, we obtain
 \begin{gather*}
        \begin{aligned}
\Big|\sum_E  & S^E((I-\Pi^{\nabla,E}_{p})z_I,(I-\Pi^{\nabla,E}_{p})u_{h,p})\Big|\\
\lesssim \, &s^\star(p)\alpha_{max}\frac{h^{2\,\min\{\mus,\mut\}}}{p^{2\,\min\{s,t-1\}}}\left(\norm{z}{H^{t+1}(D)}^2+\norm{u}{H^{s+1}(D)}^2\right)\\
\lesssim \, &s^\star(p)\frac{\alpha_{max}}{\alpha_{min}}\alpha_{min}^{-1}\frac{h^{2\,\min\{\mus,\mut\}}}{p^{2\,\min\{s,t-1\}}}\left(\norm{q}{H^{t-1}(D)}^2+\norm{f}{H^{s-1}(D)}^2\right),
      \end{aligned}
    \end{gather*}     
where, in the last step, we used again~\eqref{eq:bounds_uz}.
Finally, for $|F(z_I)-F_h(z_I)|$, we have  
    \begin{eqnarray}
    |F(z_I)-F_h(z_I)|&=&|(F-F_h)(z_I-z)+(F-F_h)(z)|  \nonumber\\
    &=&\Big|\sum_{E}\int_E (f-\Pi^{0,E}_{p}f)(z_I-z) + \sum_{E} \int_E (f-\Pi^{0,E}_{p}f)(z-\Pi^{0,E}_{p}z)\Big|\nonumber \\
&=&\Big|\sum_{E}\int_E (f-\Pi^{0,E}_{p}f)(I-\Pi^{0,E}_{p})(z_I-z) + \sum_{E} 
\int_E(f-\Pi^{0,E}_{p}f)(z-\Pi^{0,E}_{p}z)\Big|\nonumber\\
    &\le&
    \|f-\Pi^{0}_{h,p}f\|_{L^2(D)}
   ( 
   \| (I-\Pi^{0}_{h,p})(z_I-z)\|_{L^2(D)}
    +
    \|z-\Pi^{0}_{h,p}z\|_{L^2(D)})
    \nonumber\\
    &\lesssim&
    {\|f-\Pi^{0}_{h,p}f\|_{L^2(D)}\left(\frac{h}{{p}}
    \seminorm{z-z_I}{H^1(D)}
    +\|z-\Pi^{0}_{p}z\|_{L^2(D)}\right)}\nonumber\\
    &\lesssim& {\frac{h^{\min\{s,p+2\}+\mut}}{p^{s+t-1}}\norm{f}{H^{s-1}(D)}\norm{z}{H^{t+1}(D)}}\\
    &\lesssim& \alpha_{min}^{-1}\frac{h^{\min\{s,p+2\}+\mut}}{p^{s+t-1}}\norm{f}{H^{s-1}(D)}\norm{q}{H^{t-1}(D)},
    \nonumber
\end{eqnarray}
where we also employed~\eqref{eq:L2H1} and the error estimate $\|f-\Pi^{0,E}_{p}f\|_{L^2(D)}\lesssim \frac{h^{\min\{s-1,p+1\}}}{p^{s-1}}\norm{f}{H^{s-1}(D)}$, which holds as $f\in H^{s-1}(D)$.
Collecting the above results, we obtain, for the term~$(IV)$,
\begin{gather*}
\begin{aligned}
(IV)&\lesssim s^\star(p)\frac{\alpha_{max}}{\alpha_{min}}\alpha_{min}^{-1}\frac{h^{2\,\min\{\mus,\mut\}}}{p^{2\,\min\{s,t-1\}}}\left(\norm{q}{H^{t-1}(D)}^2+\norm{f}{H^{s-1}(D)}^2\right).
\end{aligned}
\end{gather*}
This, together with the estimate obtained for~$(III)$, gives
\[
(II)\lesssim \frac{s^\star(p)}{s_\star(p)}\left(\frac{\alpha_{max}}{\alpha_{min}}\right)^2 \alpha_{min}^{-1}
\frac{h^{2\,\min\{\mus,\mut\}}}{p^{2\,\min\{s,t-1\}}}
\left(\norm{q}{H^{t-1}(D)}^2+\norm{f}{H^{s-1}(D)}^2\right),
\]
which, combined with the estimate obtained for~$(I)$, yields
\[
|\Q(u-u_{h,p})|\lesssim 
\frac{s^\star(p)}{s_\star(p)}\left(\frac{\alpha_{max}}{\alpha_{min}}\right)^2 \alpha_{min}^{-1}
\frac{h^{2\,\min\{\mus,\mut\}}}{p^{2\,\min\{s,t-1\}}}
\left(\norm{q}{H^{t-1}(D)}^2+\norm{f}{H^{s-1}(D)}^2\right).
\]
Then, estimate~\eqref{Qoi:error:1} follows, owing to~$\min\{\mus,\mut\}=\min\{s,t,p\}$.

\medskip

To obtain \eqref{Qoi:error:2},
    we observe that there holds
\begin{eqnarray*}
        |\Q(u)-\Q_h(u_{h,p})| &=& |\Q(u)-\Q_h(u) + \Q_h(u)-\Q_h(u_{h,p})|\nonumber\\
        &=& \Big|\int_D (q - \Pi^0_{p,h} q) u + \int_D \Pi^0_{p,h} q(u-u_{h,p})\Big| \nonumber\\
        &=& \Big|\int_D (q - \Pi^0_{p,h} q)(u- \Pi^0_{p,h}u)+\int_D (\Pi^0_{p,h}q-q)(u-u_{h,p})+\Q(u-u_{h,p})\Big| \nonumber\\
        &\le& \|q-\Pi^0_{p,h} q\|_{L^2(D)}\left(\|u-\Pi^0_{p,h} u\|_{L^2(D)}+\|u-u_{h,p}\|_{L^2(D)}\right)+\Q(u-u_{h,p}).
    \end{eqnarray*}
Then, using the $L^2$ projection error estimates (notice that  $q\in H^{t-1}(D)$ implies 
$$\|q-\Pi^{0,E}_{p}q\|_{L^2(D)}\lesssim \frac{h^{\min\{t-1,p+1\}}}{{p^{t-1}}} \norm{q}{H^{t-1}(D)}$$ and the estimate in~\eqref{vem:estimate}, we have
\begin{gather*}
    \begin{aligned}
        |\Q(u)-\Q_h(u_h)|
        & \lesssim \frac{\alpha^*(p)}{s_\star(p)}\frac{\alpha_{max}}{\alpha_{min}}\frac{h^{\mus+\min\{t,p+2\}}}{p^{s+t}} \norm{q}{H^{t-1}(D)}\norm{u}{H^{s+1}(D)} + \Q(u-u_{h,p})\\
        & \lesssim \frac{\alpha^*(p)}{s_\star(p)}\frac{\alpha_{max}}{\alpha_{min}}\alpha_{min}^{-1}\frac{h^{\mus+\min\{t,p+2\}}}{p^{s+t}} \norm{q}{H^{t-1}(D)} \norm{f}{H^{s-1}(D)} + \Q(u-u_{h,p}).
    \end{aligned}
\end{gather*}
This, together with~\eqref{Qoi:error:1} gives~\eqref{Qoi:error:2}.
   \end{proof} 

\begin{remark}\label{rem:classical}
The presence of~$p^{2\min\{s,t-1\}}$ in the denominators in estimates~\eqref{Qoi:error:1} and~\eqref{Qoi:error:2} of Theorem~\ref{thm:qoi_apriori} is a consequence of the use, in the proof, of the best approximant~$z_I$ in the ``enhanced'' VE space~$W_{h,p}$,
which was proven in to satisfy~\eqref{eq:inter-estimate}. 
However, a closer inspection of the proof of~Theorem~\ref{thm:qoi_apriori} shows that only the degrees of freedom of~$z_I$ are actually used, together with the consistency of method~\eqref{pb: vem} when tested with the function~$z_I$. Therefore, if the right-hand side~$F_h(v_{h,p})$ of method~\eqref{pb: vem} were defined as
\[
F_h(v_{h,p})
=\sum_{E\in \T_h}\int_E \Big(\Pi^{0,E}_{p-2} f\Big)\, v_{h,p},
\]
that is employing~$\Pi^{0,E}_{p-2}$ instead of~$\Pi^{0,E}_{p}$, one could use the best approximant in the ``classical'' VE space (see~\cite{BCMR:2016} for its definition), say~$z_I^c$, instead of~$z_I$, without modifying the argument. In fact, since only the degrees of freedom of the approximant enter the analysis, there is no difference between choosing~$z_I$ or~$z_I^c$. 
We stress that modifying the definition of the right-hand side~$F_h$ is necessary for this argument. Indeed, with the original definition of~$F_h$, the required consistency property would not longer hold when testing with~$z_I^c$. Consequently, the estimate of term~$(IV)$ in the proof of Theorem~\ref{thm:qoi_apriori} could not be reproduced after replacing~$z_I$ by~$z_I^c$.  
The advantage of this alternative approach is that, for~$z_I^c$, the analogue of estimate~\eqref{eq:inter-estimate} would involve~$p^{\min\{s,t\}}$ in the denominator, instead of~$p^{\min\{s,t-1\}}$; see~\cite[Lemma 4.3]{BCMR:2016},
resulting in the presence of~$p^{2\min\{s,t\}}$ instead of~$p^{2\min\{s,t-1\}}$ in the denominators in estimates~\eqref{Qoi:error:1} and~\eqref{Qoi:error:2}.
\eremk
\end{remark}

\section{The MC-VE method}
\label{sec:mcvem}

In this section, we propose 
a method for approximating the expected value of the solution~$u$ of the elliptic problem with stochastic coefficient~\eqref{eq:strong_pde}, as well as the expected value of $\Q(u)$.\\

A widely used strategy to discretize integrals in probability is the Monte Carlo method, which relies on the following operator: 
\begin{equation}
    \mc{M}[w]=\frac{1}{M}\sum_{i=1}^M w^{(i)},
\end{equation}
where $w^{(i)}= w(\omega_i)$, $i=1,\ldots, M$, are $M$ independent identically distributed (i.i.d.) samples of the stochastic function $w(\omega)$. {Possible examples are $w^{(i)}=u(\omega_i,\cdot)$ and $w^{(i)}=\Q(u(\omega_i,\cdot))$, where $u(\omega_i,\cdot)$ is the unique solution of \eqref{eq:weak_pde} with $\omega=\omega_i$.}
We recall from \cite{BarthSchwabZollinger2011} that the following holds (note that $\E[w]$ is deterministic, whereas $\mc{M}[w]$ is a random variable because its value depends on the considered samples):
\begin{equation}
    \label{eq:mc_error_u}
    \|\E[w]-\mc{M}[w]\|_{L^2(\Omega,V)} 
    \leq M^{-1/2}
    \|w\|_{L^2(\Omega,V)} 
    \quad\forall w\in L^2(\Omega,V). 
\end{equation}
As the aim of the present work is to approximate the expected value of the
solution~$u$ to problem~\eqref{eq:strong_pde} as well as its quantity of interest $\Q(u)$, we combine MC with the VE method of order~$p$ defined over a polytopal mesh of meshsize~$h$. As a result, we get the following MC-VE estimators for $\E[u]$ and $\E[\Q(u)]$: 
\begin{align}
    \label{eq:mcvem_estimator_u}
    \mc{M}[\Pi^\nabla_{h,p} u_{h,p}]&= \frac{1}{M}\sum_{i=1}^M \Pi_{h,p}^\nabla u_{h,p}^{(i)} \in L^2(\Omega,\VbTh),\\
    \label{eq:mcvem_estimator_q}
    \mc{M}[\mathcal{Q}(\Pi^0_{h,p}u_{h,p})]&{= \frac{1}{M}\sum_{i=1}^M \mathcal{Q}(\Pi_{h,p}^0 u_{h,p}^{(i)}) \in L^2(\Omega)},
\end{align}
where $u_{h,p}^{(i)}= u_{h,p}(\omega_i,\cdot)$, $i=1,\ldots, M$, are $M$ i.i.d.~samples of the stochastic VE solution $u_{h,p}(\omega,\cdot)$, obtained by solving $M$ VE problems of the form \eqref{pb: vem}, each corresponding to a~sample~$a(\omega_i,\cdot)$ 
of the stochastic diffusion coefficient. 
We define the Bochner seminorm
\[
\norm{\cdot}{L^2(\Omega,\VbTh)}= 
\left(\int_\Omega | \cdot|_{H^1(\T_h)}^2 \right)^{1/2},
\]
which is a norm for functions with zero mean on each element of~$\T_h$. In~$L^2(\Omega,V)$, $\norm{\cdot}{L^2(\Omega,\VbTh)}=\norm{\cdot}{L^2(\Omega,V)}$.
Since in all the cases when it is used below it is actually a norm, we have defined it with the $\norm{\cdot}{}$ symbol.

\begin{remark}
Although the natural choice in~\eqref{eq:mcvem_estimator_u} would be $E_M[u_{h,p}]$ (and $E_M[\Q(u_{h,p}]$ in \eqref{eq:mcvem_estimator_q}), this is not practical, since the virtual function $u_{h,p}$ is not known in closed form. For this reason, we use the projection $\Pi^\nabla_{h,p} u_{h,p}$, which is computable from the DOFs.
\eremk
\end{remark}

In the following proposition, we derive estimates for the statistical approximation error for both the solution and the quantity of interest. For the latter, we use the analogue of the estimate derived in Theorem~\ref{thm:qoi_apriori} for a deterministic diffusion coefficient. In the case of problem~\eqref{eq:weak_pde}, this estimate becomes, with~$\mu=\min\{s,t,p\}$,
\begin{equation}
\label{eq:BQ}
\begin{split}
|\Q(u)-&\Q_h (u_{h,p})|    \\
&\lesssim \todo{\frac{s^\star(p)}{s_\star(p)}\left(\frac{a_{max}(\omega)}{a_{min}(\omega)}\right)^2 a_{min}(\omega)^{-1}}
\frac{h^{2\,\mu}}{p^{\todo{2\,\min\{s,t-1\}}}}
\left(\norm{q}{H^{t-1}(D)}^2+\norm{f}{H^{s-1}(D)}^2\right).
\end{split}
\end{equation} 

\begin{proposition}
    \label{prop:mcvem}
    Assume that $u\in L^2(\Omega,H^{s+1}(D))$, $s\ge 1$.
    Then, the following error bound on $\E[u]$ holds, with~$\mus=\min\{s,p\}$:
    \begin{gather}
    \label{eq:mcvem_error_u_2}
    \begin{aligned}
        &\|\E[u]-\mc{M}[\Pi^\nabla_{h,p}u_{h,p}]\|_{L^2(\Omega,\VbTh)} 
        \\&\lesssim 
        \todo{\frac{s^\star(p)}{s_\star(p)}}\max\left\{\norm{\frac{a_{max}}{a_{min}}}{L^2(\Omega)},
        \norm{a_{min}^{-1}}{L^2(\Omega)}\right\}
    \Bigg(\frac{h^{\todo{\mus}}}{p^s} + M^{-1/2}\Bigg)
        \left(\norm{u}{L^2(\Omega,H^{s+1}(D))}+\norm{f}{L^2(D)}\right),
    \end{aligned}
    \end{gather}
    where~$\norm{\cdot}{L^2(\Omega,H^{s+1}(D))}$ is the standard Bochner norm.
    Moreover, under the assumptions of Theorem \ref{thm:qoi_apriori}, the following error bound on $\E[\Q(u)]$ holds, with~$\mu=\min\{s,t,p\}$: 
    \begin{equation}
    \label{eq:mcvem_error_q}
    \begin{split}
        &\|\E[\mathcal Q(u)]-\mc{M}[\mathcal{Q}(\Pi^0_{h,p}u_{h,p})]\|_{L^2(\Omega)} \\
        &\lesssim \todo{\frac{s^\star(p)}{s_\star(p)}}
\norm{\Big(\frac{a_{max}}{a_{min}}\Big)^{\todo{2}}}{L^2(\Omega)}\norm{a_{min}^{-1}}{L^2(\Omega)}\Bigg(\frac{h^{2\todo{\mu}}}{p^{\todo{2\,\min\{s,t-1\}}}} + M^{-1/2}\Bigg)\left(\norm{q}{H^{t-1}(D)}^2+\norm{f}{H^{\todo{s}-1}(D)}^2\right).
        \end{split}
    \end{equation}
\end{proposition}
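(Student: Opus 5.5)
The proof will use the standard bias--variance splitting of Monte Carlo analysis. For the solution, insert $\E[\Pi^\nabla_{h,p}u_{h,p}]$ and use the triangle inequality:
\[
\|\E[u]-\mc{M}[\Pi^\nabla_{h,p}u_{h,p}]\|_{L^2(\Omega,\VbTh)}\le \|\E[u-\Pi^\nabla_{h,p}u_{h,p}]\|_{L^2(\Omega,\VbTh)}+\|\E[\Pi^\nabla_{h,p}u_{h,p}]-\mc{M}[\Pi^\nabla_{h,p}u_{h,p}]\|_{L^2(\Omega,\VbTh)}.
\]
The first (deterministic) term is bounded by Jensen's inequality by $\|u-\Pi^\nabla_{h,p}u_{h,p}\|_{L^2(\Omega,\VbTh)}$.

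\textbf{Discretization term.} Pointwise in $\omega$, we split $u-\Pi^\nabla_{h,p}u_{h,p}=(u-\Pi^\nabla_{h,p}u)+\Pi^\nabla_{h,p}(u-u_{h,p})$. The stability~\eqref{eq:proj_stability_nabla} together with~\eqref{eq:proj_approx_nabla} and~\eqref{vem:estimate} (applied with $\alpha=a(\omega,\cdot)$) gives
\[
|u-\Pi^\nabla_{h,p}u_{h,p}|_{\VbTh}\lesssim \frac{s^\star(p)}{s_\star(p)}\frac{a_{max}(\omega)}{a_{min}(\omega)}\frac{h^{\mus}}{p^s}\|u(\omega)\|_{H^{s+1}(D)}.
\]
Squaring and integrating over $\Omega$ requires control of a product of two random quantities. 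The plan is to apply Hölder's inequality and use the $L^p$-integrability granted by Assumption~\ref{ass:a2}. Under the bound as literally stated, with $\|a_{max}/a_{min}\|_{L^2}$ times $\|u\|_{L^2(\Omega,H^{s+1})}$, this is where care is needed. The strict version would be $\|a_{max}/a_{min}\|_{L^4}\|u\|_{L^4(\Omega,H^{s+1})}$. I would either accept this looser norm, absorbed in the implicit constant/interpretation, or follow the paper's convention of pulling the coefficient ratio out in the $L^2(\Omega)$ norm. I expect this step to be the main obstacle: reconciling the product of random factors with the exact norms displayed.

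\textbf{Sampling term.} Apply~\eqref{eq:mc_error_u} to $w=\Pi^\nabla_{h,p}u_{h,p}$; its proof uses only independence and the Hilbert structure, so it transfers verbatim to the broken seminorm. This gives a bound of $M^{-1/2}\|\Pi^\nabla_{h,p}u_{h,p}\|_{L^2(\Omega,\VbTh)}\le M^{-1/2}\|u_{h,p}\|_{L^2(\Omega,V)}$. By~\eqref{eq:H1}, this is at most $M^{-1/2}s_\star(p)^{-1}\|a_{min}^{-1}\|_{L^2(\Omega)}\|f\|_{L^2(D)}$, using $s^\star\ge1$. Summing the two contributions and taking the maximum of the coefficient norms yields~\eqref{eq:mcvem_error_u_2}.

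\textbf{Quantity of interest.} The same split is used, now in $L^2(\Omega)$. By~\eqref{eq:Qh_identity}, $\Q(\Pi^0_{h,p}u_{h,p})=\Q_h(u_{h,p})$, so the bias term is $\E[\Q(u)-\Q_h(u_{h,p})]$. This is bounded by the $L^1(\Omega)$ (hence $L^2(\Omega)$) norm of the pathwise estimate~\eqref{eq:BQ}, which produces the factor $\|(a_{max}/a_{min})^2a_{min}^{-1}\|$. That factor is split by Cauchy--Schwarz (Hölder) into the product of norms shown. The sampling term is $M^{-1/2}\|\Q_h(u_{h,p})\|_{L^2(\Omega)}$, and
\[
|\Q_h(u_{h,p})|\le\|q\|_{L^2}\|u_{h,p}\|_{L^2}\lesssim s_\star(p)^{-1}a_{min}^{-1}\|q\|_{L^2}\|f\|_{L^2},
\]
by Poincaré and~\eqref{eq:H1}. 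We then use $\|q\|\|f\|\le\|q\|^2+\|f\|^2$, $t,s\ge1$, and $a_{max}/a_{min}\ge1$ to dominate this by the stated prefactor. Combining the two contributions gives~\eqref{eq:mcvem_error_q}.
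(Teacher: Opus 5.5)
Your structure is exactly the paper's: the triangle inequality through $\E[\Pi^\nabla_{h,p}u_{h,p}]$, pathwise VE bounds for the bias, and \eqref{eq:mc_error_u} with \eqref{eq:H1} for the sampling error, repeated for the QoI. The one genuine gap is the step you flag yourself in the discretization term. It comes from using Jensen to pass to $\|u-\Pi^\nabla_{h,p}u_{h,p}\|_{L^2(\Omega,\VbTh)}$. From there you only get $\|a_{max}/a_{min}\|_{L^4(\Omega)}\|u\|_{L^4(\Omega,H^{s+1}(D))}$, and neither proposed patch works. An $L^4(\Omega)$ norm cannot be absorbed into the hidden constant, because it is not controlled by the $L^2(\Omega)$ norm. ``Following the paper's convention'' is not an argument.

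The fix is to avoid going up to $L^2(\Omega)$ at all. The quantity $\E[u]-\E[\Pi^\nabla_{h,p}u_{h,p}]$ is deterministic, so its $L^2(\Omega,\VbTh)$ norm is simply $|\E[u-\Pi^\nabla_{h,p}u_{h,p}]|_{\VbTh}\le \E\big[|u-\Pi^\nabla_{h,p}u_{h,p}|_{\VbTh}\big]$, which is an $L^1(\Omega)$ quantity. Insert your pathwise bound and apply Cauchy--Schwarz in $\Omega$:
\[
\E\Big[\frac{a_{max}}{a_{min}}\,\|u\|_{H^{s+1}(D)}\Big]\le\Big\|\frac{a_{max}}{a_{min}}\Big\|_{L^2(\Omega)}\,\|u\|_{L^2(\Omega,H^{s+1}(D))}.
\]
This gives exactly the norms in \eqref{eq:mcvem_error_u_2}, and it is what the paper does.

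It is the same mechanism you already use correctly for the QoI bias, namely $\E[(a_{max}/a_{min})^2a_{min}^{-1}]\le\|(a_{max}/a_{min})^2\|_{L^2(\Omega)}\|a_{min}^{-1}\|_{L^2(\Omega)}$. There too, drop the parenthetical ``hence $L^2(\Omega)$''. The product of $L^2(\Omega)$ norms in \eqref{eq:mcvem_error_q} comes from Cauchy--Schwarz applied to the expectation, i.e.\ the $L^1(\Omega)$ norm. Taking the $L^2(\Omega)$ norm of the product would again force $L^4(\Omega)$ norms. The rest matches the paper and is fine: the sampling terms, the transfer of \eqref{eq:mc_error_u} to the broken seminorm, and the uses of $s^\star(p)\ge 1$, $a_{max}/a_{min}\ge 1$ and $\|q\|\,\|f\|\le\|q\|^2+\|f\|^2$.
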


\begin{proof}
The proof of \eqref{eq:mcvem_error_u_2} follows the same steps as the proof 
of~\cite[Theorem~4.3]{BarthSchwabZollinger2011}. 
We include the proof here for completeness. By the triangle inequality, we get
\begin{gather*}
\begin{aligned}
    &\|\E[u]-\mc{M}[\Pi^\nabla_{h,p}u_{h,p}]\|_{L^2(\Omega,\VbTh)}\\
    &\leq 
    \underbrace{\|\E[u]-\E[\Pi^\nabla_{h,p}u_{h,p}]\|_{L^2(\Omega,\VbTh)}}_{(I)}
    + \underbrace{\|\E[\Pi^\nabla_{h,p}u_{h,p}]-\mc{M}[\Pi^\nabla_{h,p}u_{h,p}]\|_{L^2(\Omega,\VbTh)}}_{(II)}.
\end{aligned}
\end{gather*}
We estimate $(I)$ by using the approximation and stability properties \eqref{eq:proj_approx_nabla} and~\eqref{eq:proj_stability_nabla} of $\Pi^\nabla_{h,p}$, as well as the error estimate \eqref{vem:estimate} of the VE method: 
\begin{gather*}
\begin{aligned}
    \|\E[u]-\E[\Pi^\nabla_{h,p}u_{h,p}]\|_{L^2(\Omega,\VbTh)}
    &= \|\E[u-\Pi^\nabla_{h,p}u_{h,p}]\|_{L^2(\Omega,\VbTh)}\\
    &\leq \E[\seminorm{u-\Pi^\nabla_{h,p}u_{h,p}}{\VbTh}]\\
    &\leq \E[\seminorm{u-\Pi^\nabla_{h,p}u}{\VbTh}] +\E[\seminorm{\Pi^\nabla_{h,p}(u-u_{h,p})}{\VbTh}]\\
    &\lesssim \todo{\frac{s^\star(p)}{s_\star(p)}}\norm{\frac{a_{max}}{a_{min}}}{L^2(\Omega)}\frac{h^{\todo{\mus}}}{{p^s}}\norm{u}{L^2(\Omega,H^{s+1}(D))}.
\end{aligned}
\end{gather*}
For $(II)$, we use the MC error estimate~\eqref{eq:mc_error_u} and the stability estimates~\eqref{eq:proj_stability_nabla} and~\eqref{eq:H1} obtaining
\begin{gather*}
    \begin{aligned}
(II)&\le M^{-1/2}\norm{\Pi^\nabla_{h,p}u_{h,p}}{L^2(\Omega,\VbTh)}
\lesssim M^{-1/2}\norm{u_{h,p}}{L^2(\Omega,\VbTh)}\\
&\lesssim \todo{s_\star(p)^{-1}}\norm{ a_{min}^{-1}}{L^2(\Omega)} M^{-1/2}\norm{f}{L^2(D)},
      \end{aligned}
\end{gather*}
from which~\eqref{eq:mcvem_error_u_2} follows, \todo{taking into account that~$s_\star(p)^{-1}\le \frac{s^\star(p)}{s_\star(p)}$.}

The error bound \eqref{eq:mcvem_error_q} follows similarly, by adding and subtracting the term $\E[\mathcal Q(\Pi^0_{h,p} u_{h,p})]$ into $\|\E[\mathcal Q(u)]-\mc{M}[\mathcal Q(\Pi^0_{h,p} u)]\|_{L^2(\Omega)}$:
\begin{gather*}
\begin{aligned}
    &\|\E[\Q(u)]-\mc{M}[\Q(\Pi^0_{h,p}u_{h,p})]\|_{L^2(\Omega)}\\
    &\leq 
    \underbrace{\|\E[\Q(u)]-\E[\Q(\Pi^0_{h,p}u_{h,p})]\|_{L^2(\Omega)}}_{(I)}
    + \underbrace{\|\E[\Q(\Pi^0_{h,p}u_{h,p})]-\mc{M}[\Q(\Pi^0_{h,p}u_{h,p})]\|_{L^2(\Omega)}}_{(II)}.
\end{aligned}
\end{gather*}
Thanks to the linearity of the operator $\E[\cdot]$, and using $\Q(\Pi^0_{h,p}u_{h,p})=\Q_h(u_{h,p})$ (see \eqref{eq:Qh_identity}), there holds:
\begin{gather*}
    \begin{aligned}
        (I)&
        =\| \E[\Q(u)-\Q_h(u_{h,p})] \|_{L^2(\Omega)}= \E[\todo{|}\Q(u)-\Q_h(u_{h,p})\todo{|}]\\
        &\lesssim \todo{\frac{s^\star(p)}{s_\star(p)}}\E\Big[\Big(\frac{a_{max}}{a_{min}}\Big)^{\todo{2}} a_{min}^{-1}\Big]
        \frac{h^{2\todo{\mu}}}{p^{\todo{2\,\todo{\min\{s,t-1\}}}}}  \left(\norm{q}{H^{\todo{t}-1}(D)}^2+\norm{f}{H^{\todo{s}-1}(D)}^2\right), 
    \end{aligned}
\end{gather*}
where, in the last step, we applied \eqref{eq:BQ}.
From~\eqref{eq:mc_error_u}, the definition of~$\Q$, the Cauchy-Schwarz inequality, and the $L^2(D)$ stability of~$\Pi^0_{h,p}$, we have
\begin{gather*}
    \begin{aligned}
(II)\lesssim M^{-1/2}\norm{\Q(\Pi^0_{h,p}u_{h,p})}{L^2(\Omega)}
\leq M^{-1/2} \norm{q}{L^2(D)} \norm{u_{h,p}}{L^2(\Omega,L^2(D))}.
    \end{aligned}
\end{gather*}
Using the Poincar\'e inequality and the stability property in~\eqref{eq:H1}, we obtain
\begin{gather*}
    \begin{aligned}
(II)\lesssim \todo{s_\star(p)^{-1}}\norm{a_{min}^{-1}}{L^2(\Omega)} M^{-1/2} \norm{q}{L^2(D)}\norm{f}{L^2(D)}.
   \end{aligned}
\end{gather*}
The above estimates of $(I)$ and~$(II)$ give \eqref{eq:mcvem_error_q}, taking onto account that 
\begin{gather*}
    \begin{aligned}
\max\left\{\todo{\frac{s^\star(p)}{s_\star(p)}}\E\Big[\Big(\frac{a_{max}}{a_{min}}\Big)^{\todo{2}} a_{min}^{-1}\Big], \todo{s_\star(p)^{-1}}\norm{a_{min}^{-1}}{L^2(\Omega)}\right\}
\\
\le \todo{\frac{s^\star(p)}{s_\star(p)}}
\norm{\Big(\frac{a_{max}}{a_{min}}\Big)^{\todo{2}}}{L^2(\Omega)}\norm{a_{min}^{-1}}{L^2(\Omega)}.
   \end{aligned}
\end{gather*}
\end{proof}

\begin{remark}
The regularity assumption in Proposition~\ref{prop:mcvem} is satisfied provided that the diffusion coefficient~$a$ is sufficiently smooth. For instance, if~$a(\omega,\cdot)\in W^{1,\infty}(D)$ satisfies 
$\|a(\omega,\cdot)\|_{W^{1,\infty}(D)}\leq C$ uniformly over $\omega\in\Omega$,
the elliptic regularity theory ensures that $\|u(\omega,\cdot)\|_{H^2(D)}$ is bounded uniformly over $\omega\in\Omega$, and thus fulfills $u\in L^2(\Omega,H^2(D))$. This holds, for example, if~$a(\omega,x)$ is given by the following Karhunen--Lo\`eve-type expansion:
\begin{gather*}
    a(\omega,x)=\bar a(x) + \sum_{j=1}^J \beta_j \varphi_j(x) Y_j(\omega),
\end{gather*}
where $\bar a\in W^{1,\infty}(D)$ is uniformly positive, $\{Y_j\}_{j=1}^J$ is a set of independent uniformly distributed random variables $Y_j\sim\mathcal{U}([-1,1])$, $\{\varphi_j\}_{j=1}^J\subset W^{1,\infty}(D)$, 
and~$\{\beta_j\}_{j=1}^J$ is a set of non-negative coefficients such that $\beta_j\|\varphi_j\|_{W^{1,\infty}(D)}\leq C\, j^{-n}$ for some $n>1$ and constant $C>0$ independent of $j$. 

For diffusion coefficients with $\omega$-dependent lower and upper bounds $a_{min}(\omega)$, $a_{max}(\omega)$, proving that $u\in L^2(\Omega,H^2(D))$ requires a more delicate analysis, since $\|u(\omega,\cdot)\|_{H^2(D)}$ has an $\omega$-dependent upper bound. In~\cite[Theorem 2.1]{TeckentrupScheichlGilesUllmann}, it is proved that, if $a\in L^p(\Omega,C^t(\bar D))$ and $f\in H^{t-1}(D)$ for some $0<t\leq 1$ and any $p<\infty$, then $u\in L^p(\Omega,H^{2}(D))$. For instance, this applies when the diffusion coefficient in problem~\eqref{eq:strong_pde} is lognormal, namely, $a(\omega,x)=\exp(Y(\omega,x))$, with~$Y$ being a Gaussian random field whose mean is H\"older continuous and whose covariance function is Lipschitz continuous.
\eremk
\end{remark}

\begin{remark}
\label{rem:mc_Mopt}
The choice~$M={\mathcal O}(p^{2s}\,h^{-2\todo{\mus}})$ balances the MC error (${\mathcal O}(M^{-1/2})$) with the VE discretization error~(${\mathcal O}(h^{\todo{\mus}}\,p^{-s})$) in~\eqref{eq:mcvem_error_u_2}, resulting in the overall error of~${\mathcal O}(h^{\todo{\mus}}\,p^{-s}\todo{\frac{s^\star(p)}{s_\star(p)}})$. This is the optimal choice of~$M$, given \todo{the smoothness of~$u$,} the mesh size~$h$ and the 
\todo{order}~$p$, to achieve a prescribed tolerance. {Analogously, the choice $M={\mathcal O}(p^{\todo{4\,\todo{\min\{s,t-1\}}}}\,h^{-4\todo{\mu}})$ balances the MC error (${\mathcal O}(M^{-1/2})$) with the VE discretization error~(${\mathcal O}(h^{2\todo{\mu}}\,p^{-{\todo{2\,\todo{\min\{s,t-1\}}}}}\todo{\frac{s^\star(p)}{s_\star(p)}})$) in~\eqref{eq:mcvem_error_q}.}
\eremk
\end{remark}

{\begin{remark}
One could alternatively define a
MC-VE estimator for $\E[u]$ employing the $L^2$ projector: 
\begin{align}
\label{EM:new_def}
    \mc{M}[\Pi^0_{h,p} u_{h,p}]&= \frac{1}{M}\sum_{i=1}^M \Pi_{h,p}^0 u_{h,p}^{(i)} \in L^2(\Omega,L^2(D)),
\end{align}
for which the following bound can be proved along the same steps as in the proof of Proposition~\ref{prop:mcvem}:
\begin{gather}
\label{eq:bound_Eu_Pi0}
\|\E[u]-\mc{M}[\Pi^0_{h,p}u_{h,p}]\|_{L^2(\Omega,L^2(D))}\lesssim\Bigg(\frac{h^{\todo{\mus}+1}}{{p^{s+1}}} + M^{-1/2}\Bigg).
\end{gather}
This estimator provides no control in the~$L^2(\Omega,H^1(\T_h))$ norm.
The use of definition~\eqref{EM:new_def} is particularly helpful in applications where 
multiple functionals $\Q$ of the solution of the form~\eqref{eq:q}
are of interest. Indeed, noting that $\mc{M}[\Q(\Pi^0_{h,p}u_{h,p})]=\Q(\mc{M}[\Pi^0_{h,p}u_{h,p}])$, one can compute the MC-VE estimator~$\mc{M}[\Pi^0_{h,p}u_{h,p}]$ for $\E[u]$, and then the MC-VE estimator for $\E[\Q(u)]$ can be obtained by simply applying the functional $\Q$ to~$\mc{M}[\Pi^0_{h,p}u_{h,p}]$, for all occurrences of~$\Q$, allowing for substantial computational savings. 

For the error analysis, we observe that the following holds 
\[
\begin{aligned}
\|\E[\mathcal Q(u)]-\mc{M}[\mathcal{Q}(\Pi^0_{h,p}u_{h,p})]\|_{L^2(\Omega)}
&= \|\Q\big(\E[u]-\mc{M}[\Pi^0_{h,p}u_{h,p}]\big)\|_{L^2(\Omega)}\\
&\leq \|q\|_{L^2(D)}\|\E[u]-\mc{M}[\Pi^0_{h,p}u_{h,p}]\|_{L^2(\Omega,L^2(D))},   
\end{aligned}
\]
where the equality follows from Fubini's theorem and the linearity of $\Q$, and the inequality follows from the Cauchy-Schwarz inequality. 
Hence, by using~\eqref{eq:bound_Eu_Pi0}, we could directly derive an upper bound of the error~$\|\E[\mathcal Q(u)]-\mc{M}[\mathcal{Q}(\Pi^0_{h,p}u_{h,p})]\|_{L^2(\Omega)}$. 
This bound, however, would not be sharp; cf.~\eqref{eq:mcvem_error_q}.
\eremk
\end{remark}}

To alleviate the computational burden of the MC-VE method without compromising accuracy, we study the multi-level $h$-version of the MC-VE method (MLMC-VE) in the following section. This approach considers multiple spatial resolution levels: the estimator is first computed on a coarse mesh 
using the MC-VE method,  
and subsequent correction terms are added to improve accuracy.

\section{The MLMC-VE method}
\label{sec:MLMC-VE}
The MLMC-VE method we introduce in this section relies on a 
sequence of nested polygonal meshes $\{\T_\ell\}_{\ell\ge 0}$ discretizing the domain $D$: 
\begin{gather}
\label{eq:nested_meshes}
\T_0 \subseteq\T_1\subseteq\cdots \subseteq \T_\ell\subseteq\cdots\subseteq\T_L\subseteq\cdots,
\end{gather}
where each $\T_\ell$ is a refinement of $\T_{\ell-1}$, and $\T_\ell$ has meshsize $h_\ell$. Let~$V_\ell$ denote the VE space of degree $p$ (fixed and independent of~$\ell$) on the mesh $\T_\ell$. The sequence of meshes~\eqref{eq:nested_meshes} induces the sequence $\{V_\ell\}_{\ell\ge 0}$ of conforming VE spaces, which are not nested, although their polynomial subspaces are. This structure guarantees improved approximation properties as~$\ell\rightarrow\infty$, almost surely in $\Omega$:
\begin{align*}
    \| u(\omega,\cdot)-{u_{h_{\ell+1},p}}(\omega,\cdot)\|_V
    &\leq \| u(\omega,\cdot)-{u_{h_{\ell},p}}(\omega,\cdot)\|_V,\\
    \| u(\omega,\cdot)-{u_{h_{\ell},p}}(\omega,\cdot)\|_V&
    \xrightarrow[]{\ell\rightarrow\infty}0,
\end{align*}
where $u(\omega,\cdot)$ denotes the solution of~\eqref{eq:weak_pde}, and ${u_{h_{\ell+1},p}}(\omega,\cdot)\in V_\ell$ denotes its VE approximation, i.e., the solution of \eqref{pb: vem}.
Since $p$ is fixed, to simplify the notation we set 
$\Pi^\nabla_\ell=
\Pi^\nabla_{h_\ell,p}$ {and $u_{h_\ell,p}= u_\ell$}.
Given the finite sequence $\{\T_\ell\}_{\ell=0}^L$ of the first $L+1$ nested meshes from~\eqref{eq:nested_meshes}, and the corresponding sequence $\{V_\ell\}_{\ell=0}^L$ of VE spaces, we define the MLMC-VE estimator for~$\E[u]$ 
as
\begin{align}
    \label{eq:mlmc_vem_u}
    \mlmc[u]&= \sum_{\ell=1}^L \mc{M_\ell}[w_\ell] 
    = \sum_{\ell=1}^L \frac{1}{M_\ell} \sum_{i=1}^{M_\ell }w^{(i)}_\ell \in L^2(\Omega,\VbTL), 
\end{align}
where
$$
w_\ell=\Pi^\nabla_\ell u_\ell
    -\Pi^\nabla_{\ell-1} u_{\ell-1}, \qquad
w^{(i)}_\ell=\Pi^\nabla_\ell u^{(i)}_\ell
    -\Pi^\nabla_{\ell-1} u^{(i)}_{\ell-1},$$
and $u_{0}^{(i)}= 0$. Note that the MLMC-VE estimator~\eqref{eq:mlmc_vem_u} consists of the sum over the~$L$ levels of the MC-VE estimators of the increments $w_\ell$, using a level-dependent number $M_\ell$ of samples. 

\begin{remark}
\label{rem:mlmc:1}
Since $u_\ell^{(i)}$ and $u_{\ell-1}^{(i)}$ are VE functions belonging to two different, non-nested VE spaces $V_\ell$ and $V_{\ell-1}$, respectively, an MLMC-VE estimator of the type
\[
\mlmc[u] = \sum_{\ell=1}^L 
\mc{M_\ell}
[u_\ell-u_{\ell-1}],
\]
would not be computable. This is why, in our definition~\eqref{eq:mlmc_vem_u}, we insert the projection operators $\Pi^\nabla_\ell$ and $\Pi^\nabla_{\ell-1}$, which makes the estimator computable.
\eremk
\end{remark}
{Similarly to \eqref{eq:mlmc_vem_u}, we define the MLMC-VE estimator for~$\E[\Q(u)]$ as
\begin{gather}
    \label{eq:mlmc_vem_QoI}
    \mlmc[\mathcal{Q}(u)]= \sum_{\ell=1}^L \mc{M_\ell}[\mathcal{Q}(\nu_\ell)] 
    = \sum_{\ell=1}^L \frac{1}{M_\ell} \sum_{i=1}^{M_\ell }\mathcal{Q}(\nu^{(i)}_\ell) \in L^2(\Omega),
\end{gather}
where
$$
\nu_\ell=\Pi^0_\ell u_\ell
    -\Pi^0_{\ell-1} u_{\ell-1}, \qquad
\nu^{(i)}_\ell=\Pi^0_\ell u^{(i)}_\ell
    -\Pi^0_{\ell-1} u^{(i)}_{\ell-1}.
$$
}

In the next theorem, we establish error estimates for the MLMC-VE estimators~\eqref{eq:mlmc_vem_u} and~\eqref{eq:mlmc_vem_QoI}.

\begin{theorem} [Error estimate for the MLMC-VE estimators]
    \label{thm:mlmc_u_conv}
    Assume that $u\in \todo{L^2(\Omega,H^{s+1}(D))}$\todo{, $s\ge 1$}. 
    Then, we have\todo{, with~$\mus=\min\{s,p\}$} 
    \begin{gather}
        \label{eq:mlmc_u_conv}
        \begin{aligned}
        &\|\E[u]-\mlmc[u]\|_{L^2(\Omega,{\VbTL})}\\
        &\lesssim \todo{\frac{s^\star(p)}{s_\star(p)}}\norm{\frac{a_{max}}{a_{min}}}{L^2(\Omega)}{p^{-s}} \Bigg(h_L^{\todo{\mus}} + M_1^{-1/2}h_1^{\todo{\mus}}+
        \sum_{\ell=2}^L M_\ell^{-1/2} (h_\ell^{\todo{\mus}}+2h_{\ell-1}^{\todo{\mus}})\Bigg)\|u\|_{L^{2}(\Omega,H^{s+1}(D))}.
        \end{aligned}
    \end{gather}
    {Moreover, under the assumptions of Theorem \ref{thm:qoi_apriori}, the following error bound holds\todo{, with~$\mu=\min\{s,t,p\}$}:
    \begin{gather}
        \label{eq:mlmc_qoi_conv}
        \begin{aligned}
        &\|\E[\mathcal{Q}(u)]-\mlmc[\mathcal{Q}(u)]\|_{L^2(\Omega)}\\
        &\lesssim \todo{\frac{s^\star(p)}{s_\star(p)}}\norm{\Big(\frac{a_{max}}{a_{min}}\Big)^{\todo{2}} a_{min}^{-1}}{L^2(\Omega)} p^{-\todo{2\,\todo{\min\{s,t-1\}}}} 
        \Bigg(h_L^{2\todo{\mu}} + M_1^{-1/2} h_1^{2\todo{\mu}}+
        \sum_{\ell=2}^L M_\ell^{-1/2} (h_\ell^{2\todo{\mu}}+h_{\ell-1}^{2\todo{\mu}})\Bigg)\\
        &\qquad
        \left(\norm{q}{H^{\todo{t}-1}(D)}^2+\norm{f}{H^{\todo{s}-1}(D)}^2\right).
        \end{aligned}
    \end{gather}}
\end{theorem}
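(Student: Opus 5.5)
The proof follows the standard MLMC argument of \cite{BarthSchwabZollinger2011}, adapted to the projected increments. By linearity of the expectation and the telescoping structure (recall $u_0^{(i)}=0$), we have $\E[\Pi^\nabla_L u_L]=\sum_{\ell=1}^L\E[w_\ell]$. The triangle inequality then gives
\[
\|\E[u]-\mlmc[u]\|_{L^2(\Omega,\VbTL)}\le \|\E[u]-\E[\Pi^\nabla_L u_L]\|_{L^2(\Omega,\VbTL)}+\sum_{\ell=1}^L\|\E[w_\ell]-\mc{M_\ell}[w_\ell]\|_{L^2(\Omega,\VbTL)} .
\]
Broken seminorms on coarser meshes are well defined on $\T_L$ by nestedness: every element of $\T_{\ell-1}$ is a union of elements of $\T_L$, so $\Pi^\nabla_{\ell-1}u_{\ell-1}$ is piecewise smooth on $\T_L$ and its broken seminorm on $\T_L$ equals that on $\T_{\ell-1}$.

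The first term is exactly term $(I)$ in the proof of Proposition~\ref{prop:mcvem} on the finest mesh. It is bounded by $\frac{s^\star(p)}{s_\star(p)}\norm{a_{max}/a_{min}}{L^2(\Omega)}h_L^{\mus}p^{-s}\|u\|_{L^2(\Omega,H^{s+1}(D))}$.

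For each level we apply the Monte Carlo estimate \eqref{eq:mc_error_u}, which gives the bound $M_\ell^{-1/2}\|w_\ell\|_{L^2(\Omega,\VbTL)}$. The main step is to bound the increments by discretization errors rather than by stability. Pointwise in $\omega$, the triangle inequality gives
\[
|w_\ell|_{\VbTL}\le |u-\Pi^\nabla_\ell u_\ell|_{H^1(\T_\ell)}+|u-\Pi^\nabla_{\ell-1}u_{\ell-1}|_{H^1(\T_{\ell-1})}.
\]
Each term is bounded as in Proposition~\ref{prop:mcvem}: split $u-\Pi^\nabla_\ell u_\ell=(u-\Pi^\nabla_\ell u)+\Pi^\nabla_\ell(u-u_\ell)$, then use \eqref{eq:proj_approx_nabla}, the stability \eqref{eq:proj_stability_nabla}, and \eqref{vem:estimate} with $\alpha=a(\omega,\cdot)$. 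This yields $\frac{s^\star(p)}{s_\star(p)}\frac{a_{max}(\omega)}{a_{min}(\omega)}h_\ell^{\mus}p^{-s}\|u(\omega)\|_{H^{s+1}(D)}$. For $\ell=1$ only the first term appears, since $u_0=0$. This is why the statement has $h_1^{\mu_s}$ for $\ell=1$, whereas for $\ell\ge2$ it has $h_\ell^{\mus}+h_{\ell-1}^{\mus}$; the factor $2$ in the statement is harmless slack.

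The obstacle is taking the $L^2(\Omega)$ norm of the product $\frac{a_{max}}{a_{min}}\|u\|_{H^{s+1}}$ so as to obtain the stated product of norms. Strictly, Hölder's inequality needs $L^4\times L^4$, or one of the factors in $L^\infty(\Omega)$. I would handle this by using Assumption~\ref{ass:a2}, which gives all moments of the coefficient ratio, with $u$ in a slightly higher Bochner space, or else by following the paper's convention in Proposition~\ref{prop:mcvem}, where the same step is written with the product of $L^2$ norms, absorbed into $\lesssim$. Summing the level contributions gives \eqref{eq:mlmc_u_conv}.

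The QoI bound \eqref{eq:mlmc_qoi_conv} follows identically. Telescoping gives $\sum_\ell\E[\Q(\nu_\ell)]=\E[\Q_{h_L}(u_L)]$ by \eqref{eq:Qh_identity}, so the bias term is $\E[|\Q(u)-\Q_{h_L}(u_L)|]$, which is bounded by \eqref{eq:BQ}. For the variance terms, write
\[
|\Q(\nu_\ell)|\le|\Q(u)-\Q_{h_\ell}(u_\ell)|+|\Q(u)-\Q_{h_{\ell-1}}(u_{\ell-1})|,
\]
where the second term is absent when $\ell=1$. Each term is bounded by \eqref{eq:BQ} with $h_\ell^{2\mu}$, respectively $h_{\ell-1}^{2\mu}$. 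Taking $L^2(\Omega)$ norms produces the factor $\norm{(a_{max}/a_{min})^2a_{min}^{-1}}{L^2(\Omega)}$ directly, with no product issue this time, since the data norms are deterministic.
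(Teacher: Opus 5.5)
Your architecture matches the paper's proof: split off the bias $\E[u]-\E[\Pi^\nabla_Lu_L]$, telescope using $u_0=0$, apply \eqref{eq:mc_error_u} level by level, and bound each increment by discretization errors on two consecutive meshes. Splitting $w_\ell=(\Pi^\nabla_\ell u_\ell-u)+(u-\Pi^\nabla_{\ell-1}u_{\ell-1})$ directly is a harmless shortcut of the paper's route through $w_\ell=\Pi^\nabla_\ell(u_\ell-\Pi^\nabla_{\ell-1}u_{\ell-1})$ and the stability of $\Pi^\nabla_\ell$, which is where its factor $2$ comes from.

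The step that fails is $\ell=1$. Since $u_0=0$, we have $w_1=\Pi^\nabla_1u_1$, and the second term of your triangle inequality is $|u-\Pi^\nabla_0u_0|=|u|_{H^1(D)}$. It does not disappear: it is the whole solution. Indeed $\bigl|\,|w_1|_{\VbTL}-|u|_{H^1(D)}\bigr|\le|u-\Pi^\nabla_1u_1|_{H^1(\T_1)}$, so $|w_1|_{\VbTL}$ stays of size $|u|_{H^1(D)}$ as $h_1\to0$. Hence no bound $\lesssim h_1^{\mus}$ with a mesh-independent constant can hold. This is forced by the very telescoping you used in your first step, which makes the first level carry the whole coarse solution. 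The same mistake recurs for the QoI, where $\Q(\nu_1)=\Q_{h_1}(u_1)\approx\Q(u)$, not a discretization error.

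What you can actually prove at $\ell=1$ is the stability bound $\|w_1\|_{L^2(\Omega,\VbTL)}\le\|u_1\|_{L^2(\Omega,V)}\lesssim s_\star(p)^{-1}\norm{a_{min}^{-1}}{L^2(\Omega)}\norm{f}{L^2(D)}$, as in term (II) of Proposition~\ref{prop:mcvem}. Analogously, $\|\Q(\nu_1)\|_{L^2(\Omega)}\lesssim s_\star(p)^{-1}\norm{a_{min}^{-1}}{L^2(\Omega)}\norm{q}{L^2(D)}\norm{f}{L^2(D)}$. So the $\ell=1$ contribution is $M_1^{-1/2}$ times an $h$-independent quantity. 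The factors $h_1^{\mus}$ and $h_1^{2\mu}$ in the statement can only be obtained by regarding the coarsest mesh as fixed and absorbing $h_1$- and $p$-dependent constants. The paper's proof glosses over this too: it sums $h_\ell^{\mus}+2h_{\ell-1}^{\mus}$ over all $\ell\ge1$, although with $u_0=0$ the term $\Pi^\nabla_0(u-u_0)=\Pi^\nabla_0u$ in its chain is not small. Still, your justification that the term is absent is false and must be replaced by the argument and caveat above.

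Your worry about the product of norms is legitimate, and here you are more careful than the paper. The paper's bound on $\|w_\ell\|_{L^2(\Omega,\VbTL)}$ passes without comment from the pathwise estimate $\lesssim\frac{a_{max}}{a_{min}}\|u\|_{H^{s+1}(D)}$ to $\norm{a_{max}/a_{min}}{L^2(\Omega)}\|u\|_{L^2(\Omega,H^{s+1}(D))}$. But $\|XY\|_{L^2(\Omega)}\le\|X\|_{L^2(\Omega)}\|Y\|_{L^2(\Omega)}$ is false in general.

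However, the analogous step in Proposition~\ref{prop:mcvem}, and in your bias term, is correct. There it is applied to an expectation, i.e.\ an $L^1(\Omega)$ norm, where Cauchy--Schwarz suffices. So there is no convention to appeal to, and absorbing the discrepancy into $\lesssim$ is not a proof. A rigorous version of the sampling terms needs a stronger pairing. One option is $\norm{a_{max}/a_{min}}{L^{2r'}(\Omega)}\|u\|_{L^{2r}(\Omega,H^{s+1}(D))}$ with $1/r+1/r'=1$; Assumption~\ref{ass:a2} gives all moments of $a_{max}/a_{min}$, so any $r>1$ works if $u$ has the extra moments. The other is to assume $a_{max}/a_{min}\in L^\infty(\Omega)$. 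Either way the norms in \eqref{eq:mlmc_u_conv} change. You are right that the QoI sampling terms are free of this issue, since the data norms there are deterministic.
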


\begin{proof}
{We start proving \eqref{eq:mlmc_u_conv}.} By the triangular inequality, we get
\begin{gather*}
\begin{aligned}
&\|\E[u]-\mlmc[u]\|_{L^2(\Omega,{\VbTL})}\\
&\qquad \leq \underbrace{\|\E[u]-\E[\Pi^\nabla_Lu_L]\|_{L^2(\Omega,{\VbTL})}}_{(I)}
+ \underbrace{\|\E[\Pi^\nabla_Lu_L]-\mlmc[u]\|_{L^2(\Omega,{\VbTL})}}_{(II)}.
\end{aligned}
\end{gather*}
For~$(I)$, proceeding as in the proof of Proposition~\ref{prop:mcvem}, we obtain
\[
(I)\lesssim \todo{\frac{s^\star(p)}{s_\star(p)}}\norm{\frac{a_{max}}{a_{min}}}{L^2(\Omega)} \frac{h_L^{\todo{\mus}}}{p^s}  \|u\|_{L^2(\Omega,H^{{s}+1}(D))}.
\]
We focus now on the second term $(II)$. Expressing $\Pi^\nabla_L u_L$ as the telescopic sum
\[
\Pi^\nabla_L u_L = \sum_{\ell=1}^L {(\Pi^\nabla_\ell u_\ell -\Pi^\nabla_{\ell-1} u_{\ell-1})=\sum_{\ell=1}^L w_\ell},
\]
the linearity of the expectation, together with \eqref{eq:mlmc_vem_u} and~\eqref{eq:mc_error_u}, yields
\begin{align*}
\nonumber
(II) =
\Bigg\|\sum_{\ell=1}^L (\E[w_\ell]-\mc{M_\ell}[w_\ell])\Bigg\|_{L^2(\Omega,{\VbTL})}
&\leq \sum_{\ell=1}^L \|\E[w_\ell]-\mc{M_\ell}[w_\ell]\|_{L^2(\Omega,{\VbTL})}\\
\label{eq:IIa}
&\leq \sum_{\ell=1}^L M_\ell^{-1/2}\|w_\ell\|_{L^2(\Omega,{\VbTL})},
\end{align*}
where the last inequality follows from~\eqref{eq:mc_error_u}.
We estimate $\|w_\ell\|_{L^2(\Omega,{\VbTL})}$ as follows:
\begin{align*}
    \nonumber
    \|w_\ell\|_{L^2(\Omega,{\VbTL})}
    & = 
    \|\Pi^\nabla_\ell (u_\ell -\Pi^\nabla_{\ell-1} u_{\ell-1})\|_{L^2(\Omega,{\VbTL})}\\
    & \lesssim\left\|
    u_\ell -\Pi^\nabla_{\ell-1} u_{\ell-1}\right\|_{L^2(\Omega,{\VbTL})}\\
& \lesssim\left\|
    u_\ell -u\right\|_{L^2(\Omega,{\VbTL})}+\left\|
    u -\Pi^\nabla_{\ell-1} u\right\|_{L^2(\Omega,{\VbTL})}\\
    &\qquad +\left\|
    \Pi^\nabla_{\ell-1} (u -u_{\ell-1})\right\|_{L^2(\Omega,{\VbTL})}\\
    &\lesssim \todo{\frac{s^\star(p)}{s_\star(p)}}\norm{\frac{a_{max}}{a_{min}}}{L^2(\Omega)}\frac{(h_\ell^{\mus}+2 h_{\ell-1}^{\mus})}{{{p^s}}} \|u\|_{L^{2}(\Omega,H^{s+1}(D))},
\end{align*}
where we used the identity $ \Pi^\nabla_{\ell} (\Pi^\nabla_{\ell-1} u_{\ell-1})= \Pi^\nabla_{\ell-1} u_{\ell-1}$, 
which is a consequence of the fact that the meshes are nested, cf. \eqref{eq:nested_meshes}, 
the triangle inequality, the stability and approximation properties of the projection operators, as well as the error estimate~\eqref{vem:estimate}. Then, we obtain
\[
(II)\lesssim \todo{\frac{s^\star(p)}{s_\star(p)}}\norm{\frac{a_{max}}{a_{min}}}{L^2(\Omega)}\sum_{\ell=1}^L (M_\ell)^{-1/2} \frac{(h_\ell^{\mus}+2 h_{\ell-1}^{\mus})}{{{p^s}}} \|u\|_{L^{2}(\Omega,H^{s+1}(D))},
\]
which concludes the proof of~\eqref{eq:mlmc_u_conv}.

{The proof of~\eqref{eq:mlmc_qoi_conv} works similarly. First, we sum and subtract $\E[\mathcal{Q}(u)]$ and use the triangular inequality:
\begin{gather*}
\begin{aligned}
&\|\E[\mathcal{Q}(u)]-\mlmc[\mathcal{Q}(u)]\|_{L^2(\Omega,V)}\\
&\quad\leq \underbrace{\|\E[\mathcal{Q}(u)]-\E[\mathcal{Q}(\Pi^0_L u_L)]\|_{L^2(\Omega)}}_{(I)}
+ \underbrace{\|\E[\mathcal{Q}(\Pi^0_Lu_L)]-\mlmc[\mathcal{Q}(u)]\|_{L^2(\Omega)}}_{(II)}.
\end{aligned}
\end{gather*}
The term $(I)$ is the same as in the proof of Proposition~\ref{prop:mcvem}, hence there holds
\[
(I) \lesssim \todo{\frac{s^\star(p)}{s_\star(p)}}\E\Big[\Big(\frac{a_{max}}{a_{min}}\Big)^{\todo{2}} a_{min}^{-1}\Big] \frac{h_L^{2\todo{\mu}}}{p^{\todo{2\,\todo{\min\{s,t-1\}}}}}  \left(\norm{q}{H^{\todo{t}-1}(D)}^2+\norm{f}{H^{\todo{s}-1}(D)}^2\right).
\]
To estimate $(II)$, we express $\Pi^0_L u_L$ as telescopic sum 
\[
\Pi^0_L u_L = \sum_{\ell=1}^L (\Pi^0_\ell u_\ell -\Pi^0_{\ell-1} u_{\ell-1})=\sum_{\ell=1}^L \nu_\ell,
\]
and we use the linearity of the expectation, the linearity of $\mathcal{Q}$, together with~\eqref{eq:mlmc_vem_QoI} and~\eqref{eq:mc_error_u}:
\[
(II)\leq \sum_{\ell=1}^L M_\ell^{-1/2}\|\mathcal{Q}(\nu_\ell)\|_{L^2(\Omega)}.
\]
Finally, we estimate $\|\mathcal{Q}(\nu_\ell)\|_{L^2(\Omega)}$ as follows:
\begin{align}
    \nonumber
    \|\mathcal{Q}(\nu_\ell)\|_{L^2(\Omega)}
    &=\|\mathcal{Q}(\Pi^0_\ell u_\ell -\Pi^0_{\ell-1} u_{\ell-1})\|_{L^2(\Omega)} 
    \\\nonumber
    &\leq
    \|\mathcal{Q}(\Pi^0_\ell u_\ell - u)\|_{L^2(\Omega)}+
    \|\mathcal{Q}(u -\Pi^0_{\ell-1} u_{\ell-1})\|_{L^2(\Omega)}\\\nonumber
    &\lesssim
    \todo{\frac{s^\star(p)}{s_\star(p)}}\norm{\Big(\frac{a_{max}}{a_{min}}\Big)^{\todo{2}} a_{min}^{-1}}{L^2(\Omega)}\frac{h_\ell^{2\todo{\mu}}+h_{\ell-1}^{2\todo{\mu}}}{{p^{\todo{2\,\todo{\min\{s,t-1\}}}}}}\left(\norm{q}{H^{\todo{t}-1}(D)}^2+\norm{f}{H^{\todo{s}-1}(D)}^2\right),
\end{align}
where we used~\eqref{eq:BQ}. Then, we obtain
\[
(II)\lesssim \todo{\frac{s^\star(p)}{s_\star(p)}}\norm{\Big(\frac{a_{max}}{a_{min}}\Big)^{\todo{2}} a_{min}^{-1}}{L^2(\Omega)}
\sum_{\ell=1}^L (M_\ell)^{-1/2} \frac{h_\ell^{2\todo{\mu}}+h_{\ell-1}^{2\todo{\mu}}}{{p^{\todo{2\,\todo{\min\{s,t-1\}}}}}}\left(\norm{q}{H^{\todo{t}-1}(D)}^2+\norm{f}{H^{\todo{s}-1}(D)}^2\right),
\]
which concludes the proof of~\eqref{eq:mlmc_qoi_conv}.
}
\end{proof}
{ 
\begin{remark}
Employing the MC-VE estimator defined in  \eqref{EM:new_def} in the construction of $E^L[u]$ (cf. \eqref{eq:mlmc_vem_u}), one could easily adapt the above proof and obtain an estimate for the error $\|\E[u]-\mlmc[u]\|_{L^2(\Omega,L^2(D))}$.
\eremk
\end{remark}}

{The previous theorem gives the error estimate for the MLMC-VE method for any distribution of number of samples per level $\{M_\ell\}_{\ell=1}^L$. The following result, being a multilevel version of Remark~\ref{rem:mc_Mopt}, } specifies how to choose the 
number of samples $M_\ell$ 
at each mesh level~$\ell$ to achieve an overall error of order~$\mathcal{O}(h_L^{\todo{\mus}}\, p^{-s}\frac{s^\star(p)}{s_\star(p)})$ for the MLMC-VE estimator.

\begin{theorem}
\label{thm:mlmc_Mopt}
Consider the same assumptions as in Theorem~\ref{thm:mlmc_u_conv}. Moreover, assume, for simplicity, that $h_{\ell-1}=2 h_\ell$ for all $\ell=1,\ldots,L$. Then, the MLMC-VE estimator $\mlmc[u]$ with 
\begin{equation}
\label{eq:Mopt-h}
M_\ell \sim 
\bigg(h_\ell^{\todo{\mus}}\,h_L^{-{\todo{\mus}}}\ell^{(1+\varepsilon)}\bigg)^2
\end{equation}
samples (with $\varepsilon>0$ arbitrarily small) on the mesh level $\ell$ admits the error bound
\begin{align}
    \label{eq:mlmc_u_Mopt}
    \|\E[u]-\mlmc[u]\|_{L^2(\Omega,{\VbTL})}
    & \lesssim \frac{s^\star(p)}{s_\star(p)}\frac{h_L^{\todo{\mus}}}{{{p^s}}} \|u\|_{L^2(\Omega,H^{s+1}(D))}.
\end{align}
Similarly, for the MLMC-VE estimator $\mlmc[\Q(u)]$, with 
\[
M_\ell=\bigg(h_\ell^{{\todo{2\mu}}}\,h_L^{-{\todo{2\mu}}}\ell^{(1+\varepsilon)}\bigg)^2
\]
samples on the mesh level $\ell$, the following bound holds:
\begin{align*}
    \|\E[\Q(u)]-\mlmc[\Q(u)]\|_{L^2(\Omega)}
   & \lesssim \todo{\frac{s^\star(p)}{s_\star(p)}} 
        \frac{h_L^{2\todo{\mu}}}{p^{\todo{2\,\todo{\min\{s,t-1\}}}} } \left(\norm{q}{H^{\todo{t}-1}(D)}^2+\norm{f}{H^{\todo{s}-1}(D)}^2\right).
\end{align*}
\end{theorem}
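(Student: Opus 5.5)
The plan is to substitute the prescribed sample numbers into the general bound \eqref{eq:mlmc_u_conv} of Theorem~\ref{thm:mlmc_u_conv} and show that the bracketed level sum is $\lesssim h_L^{\mus}$. All constants in front (the factor $\frac{s^\star(p)}{s_\star(p)}\norm{a_{max}/a_{min}}{L^2(\Omega)}p^{-s}$ and $\|u\|_{L^2(\Omega,H^{s+1}(D))}$) are untouched. The $\omega$-dependent norm is treated as absorbed into the hidden constant, which is how the statement \eqref{eq:mlmc_u_Mopt} reads.

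\textbf{Step 1.} Use the geometric relation $h_{\ell-1}=2h_\ell$. It gives $h_{\ell-1}^{\mus}=2^{\mus}h_\ell^{\mus}$, so $h_\ell^{\mus}+2h_{\ell-1}^{\mus}=(1+2^{\mus+1})h_\ell^{\mus}\lesssim h_\ell^{\mus}$. Here $\mus\le p$ and $p$ is fixed, so this factor is a constant. Hence the bracket in \eqref{eq:mlmc_u_conv} is bounded by a constant times
\[
h_L^{\mus}+\sum_{\ell=1}^L M_\ell^{-1/2}h_\ell^{\mus}.
\]

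\textbf{Step 2.} Insert \eqref{eq:Mopt-h}, namely $M_\ell^{-1/2}\sim h_\ell^{-\mus}h_L^{\mus}\ell^{-(1+\varepsilon)}$. Each summand then becomes $h_L^{\mus}\ell^{-(1+\varepsilon)}$. The sum is therefore bounded by $h_L^{\mus}\sum_{\ell\ge1}\ell^{-(1+\varepsilon)}=\zeta(1+\varepsilon)\,h_L^{\mus}$, uniformly in $L$; the extra factor $\ell^{1+\varepsilon}$ is there exactly to make this series converge. Adding the first term $h_L^{\mus}$ gives \eqref{eq:mlmc_u_Mopt}.

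One must also check that $M_\ell$ is admissible, i.e. at least one: $h_\ell\ge h_L$ gives $M_\ell\gtrsim \ell^{2(1+\varepsilon)}\ge1$. Rounding to integers changes $M_\ell^{-1/2}$ only by a bounded factor.

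\textbf{Step 3.} Repeat the argument for the QoI using \eqref{eq:mlmc_qoi_conv}. First, $h_\ell^{2\mu}+h_{\ell-1}^{2\mu}=(1+4^{\mu})h_\ell^{2\mu}$. Next, $M_\ell^{-1/2}=h_\ell^{-2\mu}h_L^{2\mu}\ell^{-(1+\varepsilon)}$, so the sum is again $\lesssim \zeta(1+\varepsilon)h_L^{2\mu}$. The prefactor $\norm{(a_{max}/a_{min})^2a_{min}^{-1}}{L^2(\Omega)}$ is finite by Assumption~\ref{ass:a2} via Hölder, and it goes into the hidden constant.

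There is no real obstacle; the argument is routine. The only point needing care is the bookkeeping of constants. They depend on $\mus$ (through $2^{\mus}$) and on $\varepsilon$ (through $\zeta(1+\varepsilon)$, which blows up as $\varepsilon\to0$), and the write-up should state this dependence explicitly.
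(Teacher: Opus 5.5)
Your proposal is correct and follows the paper's proof essentially step for step. Like the paper, you use $h_{\ell-1}=2h_\ell$ to collapse $h_\ell^{\mus}+2h_{\ell-1}^{\mus}$ into $(1+2^{\mus+1})h_\ell^{\mus}$, insert $M_\ell^{-1/2}\sim h_L^{\mus}h_\ell^{-\mus}\ell^{-(1+\varepsilon)}$, bound the resulting sum by $\zeta(1+\varepsilon)\,h_L^{\mus}$, and plug this into \eqref{eq:mlmc_u_conv}; your extra remarks (finiteness of the $\omega$-dependent prefactors via Assumption~\ref{ass:a2}, $M_\ell\ge 1$, rounding, and the dependence of the hidden constant on $\mus$ and $\varepsilon$) just make explicit what the paper leaves implicit.
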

\begin{proof}
With $h_{\ell-1}=2 h_\ell$ and $M_\ell$ as in~\eqref{eq:Mopt-h},
we get
\begin{gather*}
\begin{aligned}
    & {M_1^{-1/2}h_1^{\todo{\mus}}+
        \sum_{\ell=2}^L M_\ell^{-1/2} (h_\ell^{\todo{\mus}}+2h_{\ell-1}^{\todo{\mus}})
    = M_1^{-1/2}h_1^{\todo{\mus}}+\sum_{\ell=2}^L M_\ell^{-1/2} (h_\ell^{\todo{\mus}}+2\, 2^{\todo{\mus}} h_{\ell}^{\todo{\mus}})}\\
    &{= M_1^{-1/2}h_1^{\todo{\mus}} + (1+2^{{\todo{\mus}}+1})\sum_{\ell=2}^L M_\ell^{-1/2} h_{\ell}^{\todo{\mus}}
    \sim h_1^{-{\todo{\mus}}}h_L^{\todo{\mus}} h_1^{\todo{\mus}} + (1+2^{{\todo{\mus}}+1})\sum_{\ell=2}^L
    \frac{h_L^{\todo{\mus}}}{h_\ell^{\todo{\mus}}}\, \ell^{-(1+\varepsilon)}h_\ell^{\todo{\mus}}}\\
    &{< (1+2^{{\todo{\mus}}+1})\, h_L^{\todo{\mus}} \sum_{\ell=1}^L \ell^{-(1+\varepsilon)}
    = C(\varepsilon)(1+2^{{\todo{\mus}}+1})\, h_L^{\todo{\mus}},}
\end{aligned}    
\end{gather*}
where $C(\varepsilon)$ is a positive constant depending on $\varepsilon$ (more precisely, $C(\varepsilon)=\zeta(1+\varepsilon)$, with $\zeta$ the Riemann zeta function). Inserting this into~\eqref{eq:mlmc_u_conv} gives~\eqref{eq:mlmc_u_Mopt}. The result for the QoI can be proved with a similar argument.
\end{proof}

A practical choice for the sample sizes, which avoids requiring knowledge of the solution’s regularity, is discussed in Section~\ref{sec:implementation_choices} below.
\begin{remark}
For the MLMC-VE estimator, we write
\begin{gather}
\label{eq:bias-variance}
\begin{aligned}
    \norm{\E[u]-\mlmc[u]}{L^2(\Omega,H^1(\T_L))}^2
    &= \norm{\E[u-\Pi^\nabla_L u_L]}{L^2(\Omega,H^1(\T_L))}^2
    + \mathbb{V}[\mlmc{[u]}]\\
    &= \norm{\E[u-\todo{\Pi^\nabla_L}u_L]}{L^2(\Omega,H^1(\T_L))}^2
    + \sum_{\ell=1}^L\frac{\mathbb{V}[\Pi^\nabla_\ell u_{\ell}-\Pi^\nabla_{\ell-1} u_{\ell-1}]}{M_\ell},
\end{aligned}
\end{gather}
where $\mathbb{V}[\mlmc{[u]}]\coloneqq \norm{\mlmc{[u]}-\E[\mlmc{[u]}]}{L^2(\Omega,H^1(\T_L))}^2$. The first identity follows from~$\E[\mlmc{[u]}]=\E[\Pi^\nabla_L u_L]$, which implies
 $$\mathbb{V}[\mlmc{[u]}] 
=\norm{\mlmc{[u]}-\E[\Pi^\nabla_L u_L]}{L^2(\Omega,H^1(\T_L))}^2,$$ while the second identity follows from
$$\mathbb{V}[\mc{M_\ell}[\Pi^\nabla_\ell u_{\ell}-\Pi^\nabla_{\ell-1} u_{\ell-1}]]=\todo{\frac{1}{M_\ell}}\mathbb{V}[\Pi^\nabla_\ell u_{\ell}-\Pi^\nabla_{\ell-1} u_{\ell-1}].$$
The error split in~\eqref{eq:bias-variance} is known as bias--variance decomposition. The first term is the bias component of the error, which is due to the VE discretization error at the finest level~$L$.
The second term is the variance component of the error, which is due to the sampling error.
Assume that one wants to define the estimator $\mlmc[u]$ by selecting the finest level~$L$ and the number of samples~$M_\ell$ per level such that 
\[
\norm{\E[u]-\mlmc[u]}{L^2(\Omega,H^1(\T_L))}\le \varepsilon,
\]
for a prescribed tolerance~$\varepsilon>0$. To achieve this, the tolerance is typically distributed between the bias and the variance: 
\begin{equation*}
\varepsilon^2= \varepsilon^2_{\mathrm{bias}} + \varepsilon^2_{\mathrm{var}},  
\end{equation*}
where, for $\theta\in(0,1)$, $\varepsilon^2_{\mathrm{bias}}=\theta\varepsilon^2$ controls the space discretization error and $\varepsilon_{\mathrm{var}}^2=(1-\theta)\varepsilon^2$ cotrols the sampling error.
An extensive discussion on the optimal choice of the splitting parameter can be found in~\cite{HajiAli-Nobile-vonSchwerin-Tempone}. The most common choice is $\theta=1/2$, yielding an equal splitting of the total tolerance between the bias and variance components. Following the bias--variance decomposition, the finest level~$L$ is chosen so that the discretization error is below~$\varepsilon_{\mathrm{bias}}$, and the number of samples~$M_\ell$ at each level~$\ell$ is selected so as to minimize the variance contribution to the total error for a given computational cost. This leads to 
$$M_\ell \propto 
\sqrt{\frac{\mathbb{V}[\Pi^\nabla_\ell u_{\ell}-\Pi^\nabla_{\ell-1} u_{\ell-1}]}{\mathcal C_\ell}},$$ 
where $\mathcal{C}_\ell$ denotes the cost of computing a single realization of $\Pi^\nabla_\ell u_{\ell}-\Pi^\nabla_{\ell-1} u_{\ell-1}$ (see, e.g.,~\cite{Cliffe-Giles-Scheichl-Teckentrup}).
\eremk
\end{remark}

\section{Numerical results}
\label{sec:numerics}

In this section, we first verify the convergence estimate of the VE error for a linear QoI in the deterministic case, as established in Section~\ref{sec:qoi_apriori}. We then proceed to validate 
\subsection{Verification of the estimates of the VE error for the QoI}
\label{sec:verification_test_qoi}

Let us consider problem~\eqref{eq:strong_pde_det} on $D=(0,1)\times(0,1)\subset\R^2$ with~$\alpha=1$ and~$f(x_1,x_2)=32\pi^2u_{ex}(x_1,x_2)$, whose analytical solution is $u_{ex}(x_1,x_2)=\sin(4\pi x_1)\sin(4\pi x_2)$. We use the sequence of non-nested meshes represented in Figure~\ref{fig:MeshesVoronoi}. Taking $q(x)=1$ in the definition of the quantity of interest~\eqref{eq:q}, 
we get $\Q(u_{ex})=0$. We compute the approximation $\Q_h(u_h)$ using the VE method of order 
$p=1,2,3$. The resulting errors represented in Figure~\ref{fig:qoi1_convergence} demonstrate that $|\Q(u_{ex})-\Q_h(u_h)|=\mathcal{O}(h^{2p})$, as proved in Theorem~\ref{thm:qoi_apriori}. 
\begin{figure}[!htbp]
\centering
   \begin{subfigure}[b]{0.33\textwidth} 
          \includegraphics[width=\textwidth]{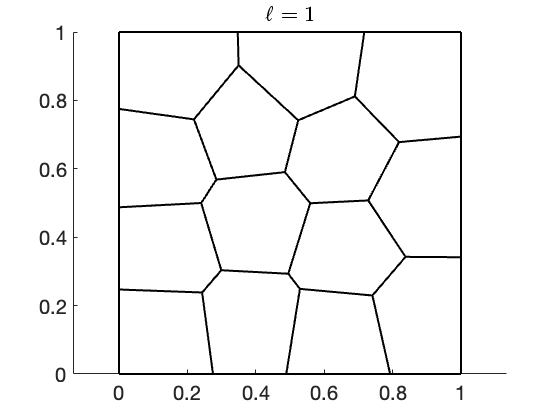}
    \end{subfigure}%
    \begin{subfigure}[b]{0.33\textwidth} 
          \includegraphics[width=\textwidth]{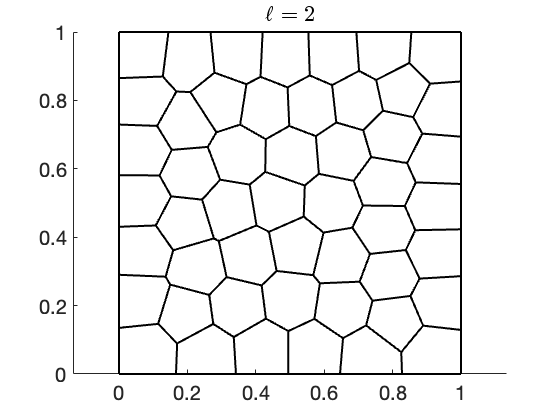}
    \end{subfigure}%
    \begin{subfigure}[b]{0.33\textwidth} 
          \includegraphics[width=\textwidth]{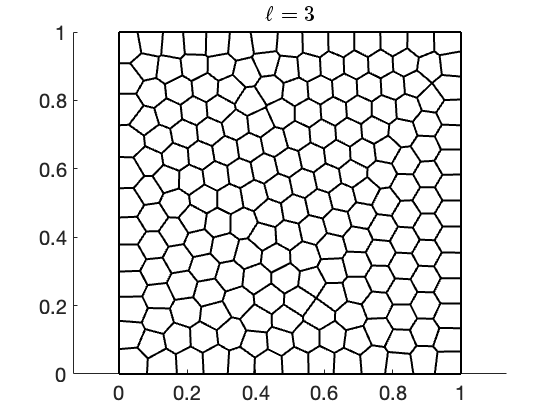}
    \end{subfigure}%
    \\
    \begin{subfigure}[b]{0.33\textwidth} 
          \includegraphics[width=\textwidth]{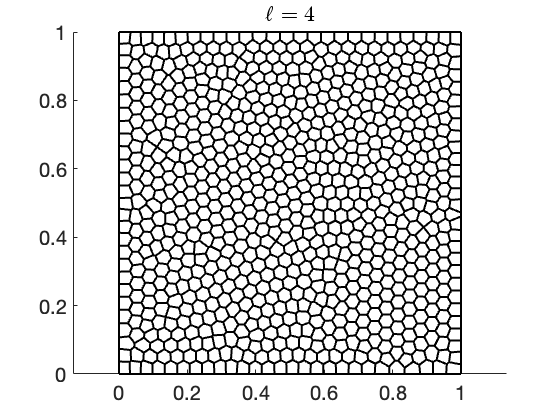}
    \end{subfigure}%
    \begin{subfigure}[b]{0.33\textwidth} 
          \includegraphics[width=\textwidth]{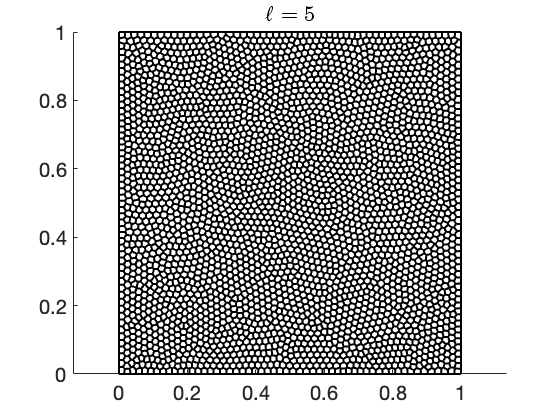}
    \end{subfigure}%
    \begin{subfigure}[b]{0.34\textwidth} 
          \includegraphics[width=\textwidth]{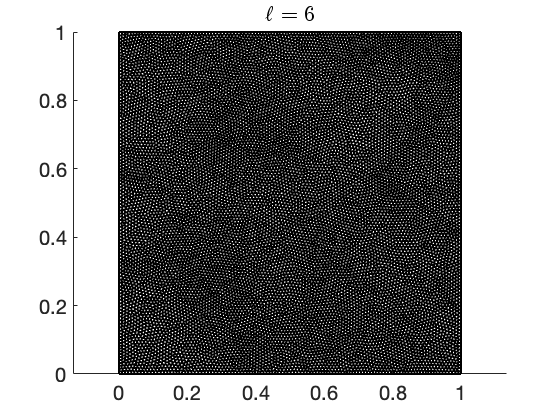}
    \end{subfigure}%
    \caption{Sequence of non-nested Voronoi meshes used in the numerical tests of Section \ref{sec:verification_test_qoi}.}
    \label{fig:MeshesVoronoi}
\end{figure}
\begin{figure}[!htbp]
    \centering
    \begin{subfigure}[b]{0.46\textwidth} 
          \resizebox{\textwidth}{!}{\pgfplotsset{
  log x ticks with fixed point/.style={
      xticklabel={
        \pgfkeys{/pgf/fpu=true}
        \pgfmathparse{exp(\tick)}%
        \pgfmathprintnumber[fixed  zerofill, precision=2]{\pgfmathresult}
        \pgfkeys{/pgf/fpu=false}
      }
  }
}

\begin{tikzpicture}

\begin{axis}[%
width=3.875in,
height=2.36in,
at={(2.6in,1.099in)},
scale only axis,
xmode=log,
xmin=0.005,
xmax=0.5,
xminorticks=true,
xlabel = {$h$ [-]},
ylabel = {$|\Q(u_{ex})-\Q_h(u_h)|$},
ymode=log,
ymin=1e-13,
ymax=5e-1,
yminorticks=true,
axis background/.style={fill=white},
title style={font=\bfseries},
xmajorgrids,
ymajorgrids,
legend columns=3,
legend style={
    at={(0.98,0.1)},
    anchor=east}
]

\addplot [color=red, mark=square,mark size=2.9pt, line width=2.0pt]
  table[row sep=crcr]{%
4.1654e-01   2.4984e-02\\
2.1521e-01   5.0135e-03\\
1.0952e-01   1.2817e-03\\
5.2280e-02   6.0326e-04\\
2.7932e-02   8.1266e-05\\
1.3089e-02   4.4624e-06\\};
\addlegendentry{$p=1$}

\addplot [color=blue, mark=o,mark size=2.9pt, line width=2.0pt]
  table[row sep=crcr]{%
4.1654e-01   5.6507e-03\\
2.1521e-01   2.7160e-04\\
1.0952e-01   6.8396e-06\\
5.2280e-02   8.7367e-07\\
2.7932e-02   2.1745e-08\\
1.3089e-02   1.8206e-09\\};
\addlegendentry{$p=2$}

\addplot [color=magenta, mark=triangle,mark size=2.9pt, line width=2.0pt]
  table[row sep=crcr]{%
4.1654e-01   7.8437e-04\\
2.1521e-01   5.5233e-05\\
1.0952e-01   5.7093e-07\\
5.2280e-02   1.0607e-08\\
2.7932e-02   1.9411e-10\\
1.3089e-02   8.1165e-12\\};
\addlegendentry{$p=3$}

\addplot [color=red, mark=square,mark size=2.9pt, mark options=solid, style = dashed,line width=2.0pt]
  table[row sep=crcr]{%
4.1654e-01   8.6752e-03\\
2.1521e-01   2.3158e-03\\
1.0952e-01   5.9975e-04\\
5.2280e-02   1.3666e-04\\
2.7932e-02   3.9011e-05\\
1.3089e-02   8.5664e-06\\};
\addlegendentry{$h^2$}

\addplot [color=blue, mark=o,mark size=2.9pt, mark options=solid, style = dashed,line width=2.0pt]
  table[row sep=crcr]{%
4.1654e-01   2.4083e-03\\
2.1521e-01   1.7161e-04\\
1.0952e-01   1.1510e-05\\
5.2280e-02   5.9762e-07\\
2.7932e-02   4.8699e-08\\
1.3089e-02   2.3483e-09\\};
\addlegendentry{$h^4$}

\addplot [color=magenta, mark=triangle,mark size=2.9pt, mark options=solid, style = dashed,line width=2.0pt]
  table[row sep=crcr]{%
4.1654e-01   5.2230e-03\\
2.1521e-01   9.9356e-05\\
1.0952e-01   1.7258e-06\\
5.2280e-02   2.0418e-08\\
2.7932e-02   4.7495e-10\\
1.3089e-02   5.0290e-12\\};
\addlegendentry{$h^6$}

\end{axis}
\end{tikzpicture}%
}
    \end{subfigure}%
    \caption{VE error for the QoI, $|\Q(u_{ex})-\Q_h(u_h)|$, for 
    $p=1,2,3$.}
    \label{fig:qoi1_convergence}
\end{figure}
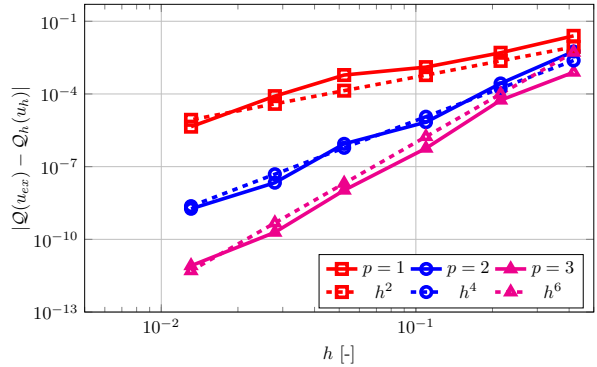

\subsection{Practical selection of MC and MLMC sample sizes}
\label{sec:implementation_choices}

We discuss the practical selection of MC and MLMC sample sizes, taking into account the comparison between the MC-VE and MLMC-VE methods.
Since the smoothness of the solution~$u$ may not be known a priori, we adopt the following choice of the number~$M$ of MC samples:
\begin{gather}
\label{eq:M_practical}
M=\left\{\begin{array}{ll}
     p^{2p}h^{-2p}, & \textrm{for }\mc{M}[u], \\
     p^{4p}h^{-4p}, & \textrm{for }\mc{M}[\Q(u)]. \\
\end{array}\right.
\end{gather}
Under the assumptions in the previous sections, this choice ensures the error bounds
\begin{gather*}
    \|\E[u]-\mc{M}[\Pi^\nabla_{h,p}u_{h,p}]\|_{L^2(\Omega,\VbTh)} 
        =\mathcal{O}(h^{\mu_s}),\\
     \|\E[\mathcal Q(u)]-\mc{M}[\mathcal{Q}(\Pi^0_{h,p}u_{h,p})]\|_{L^2(\Omega)} 
    =\mathcal{O}(h^{2\mu}),     
\end{gather*}
where we recall the defintions~$\mu_s=\min\{s,p\}$ and~$\mu=\min\{s,t,p\}$.

Similarly, the practical choice we adopt for the number of MLMC samples~$M_\ell$ at the level~$\ell$ is 
\begin{gather}
\label{eq:Mell_practical}
M_\ell=\left\{\begin{array}{ll}
     \bigg(h_\ell^{p}\,h_L^{-p}\ell^{(1+\varepsilon)}\bigg)^2, & \textrm{for }\mc{M}[u], \\
     \bigg(h_\ell^{2p}\,h_L^{-2p}\ell^{(1+\varepsilon)}\bigg)^2, & \textrm{for }\mc{M}[\Q(u)]. \\
\end{array}\right.
\end{gather}
ensuring the error bounds
\begin{gather*}
    \|\E[u]-\mlmc[u]\|_{L^2(\Omega,{\VbTL})}
    =\mathcal{O}(h_L^{\mu_s}),\\
    \|\E[\mathcal{Q}(u)]-\mlmc[\mathcal{Q}(u)]\|_{L^2(\Omega)}
        =\mathcal{O}(h_L^{2\mu}).
\end{gather*}
If the solution has high regularity, then the order of convergence $\mu_s$ of the MC-VE and MLMC-VE errors on $\E[u]$ is equal to $p$, and the order of convergence $2\mu$ of the MC-VE and MLMC-VE errors on $\E[\Q(u)]$ is equal to $2p$, provided that the function~$q$ defining the QoI is sufficiently smooth. However, if the solution is expected to have low regularity (and~$q$ has low regularity, in case of~$\E[\Q(u)]$), the use of high 
orders~$p$ is typically not beneficial. Therefore, we limit the following discussion to the case~$p\leq s$ (and~$p\le t$, in case of~$\E[\Q(u)]$).
Given the practical choice~\eqref{eq:Mell_practical}, we estimate the computational complexity of the algorithm for the computation of the MLMC-VE estimator of $\E[u]$ as follows.
\begin{theorem}
\label{thm:h-mlmc-cost}
Let $N_\ell$ denote the number of degrees of freedom of the VE space of degree $p$ on the mesh $\mathcal{T}_\ell$, and assume that $N_\ell=\mathcal O(2^{2\ell} p^2)$ (this holds, for example, for Cartesian meshes).
Moreover, assume that, at each level~$\ell$, the VE equations for each sample $u_\ell^{(i)}$ are solved employing a multigrid method, and that~$M_\ell$ is chosen according to~\eqref{eq:Mell_practical}.
Then, the complexity ${\mathbb{C}}_{MLMC}(L)$ for computing the MLMC-VE estimator $\mlmc[u]$ ensuring an accuracy of order~$\mathcal{O}(h_L^{p})$
satisfies 
\begin{gather}
\label{eq:h-mlmc-cost}
    \mathbb{C}_{MLMC}(L)\lesssim\left\{\begin{array}{cc}
        N_L L^{3+2\varepsilon}, & \text{if } p=1, \\
        N_L^p p^{2(1-p)}, & \text{if }p\geq 2.
    \end{array}\right.
\end{gather}
\end{theorem}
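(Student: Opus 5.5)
The plan is to bound the total cost by summing, over levels, the number of samples times the cost per sample:
\[
\mathbb{C}_{MLMC}(L)\lesssim \sum_{\ell=1}^L M_\ell\,(N_\ell+N_{\ell-1}) \lesssim \sum_{\ell=1}^L M_\ell N_\ell .
\]
The cost per sample is linear in $N_\ell$ because a multigrid solver computes each $u_\ell^{(i)}$ in $\mathcal O(N_\ell)$ operations. The projections $\Pi^\nabla_\ell$ cost $\mathcal O(N_\ell)$ as well, since they are local and $p$ is fixed. The level-$(\ell-1)$ solve costs $N_{\ell-1}\le N_\ell$, so it is absorbed.

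Next I insert the practical choice \eqref{eq:Mell_practical}. With $h_{\ell-1}=2h_\ell$ we have $h_\ell/h_L=2^{L-\ell}$, hence
\[
M_\ell \sim 2^{2p(L-\ell)}\,\ell^{2+2\varepsilon},
\qquad
M_\ell N_\ell\sim p^2\, 2^{2pL}\, 2^{(2-2p)\ell}\,\ell^{2+2\varepsilon}.
\]
The argument now splits into two cases according to the sign of $2-2p$.

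For $p=1$ the exponential factor disappears, and
\[
\sum_{\ell\le L} \ell^{2+2\varepsilon}\lesssim L^{3+2\varepsilon}.
\]
Since $2^{2L}p^2\sim N_L$, this gives $\mathbb{C}_{MLMC}(L)\lesssim N_L L^{3+2\varepsilon}$.

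For $p\ge2$ the sum $\sum_\ell 2^{(2-2p)\ell}\ell^{2+2\varepsilon}$ is a convergent series bounded by a constant independent of $L$. It is dominated by the coarsest levels, so the cost is $\lesssim p^2 2^{2pL}$. I then express this in terms of $N_L$: from $2^{2L}\sim N_L p^{-2}$ we get $2^{2pL}\sim N_L^p p^{-2p}$. Hence $\mathbb{C}_{MLMC}(L)\lesssim p^2 N_L^p p^{-2p}=N_L^p p^{2(1-p)}$, which is the stated bound.

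The main obstacle is bookkeeping rather than analysis. First, one must make sure the constant in the geometric sum for $p\ge2$ is uniform in $p$. It is, because the ratio is at most $2^{-2}$ and the polynomial factor $\ell^{2+2\varepsilon}$ is harmless. Second, one must check the multigrid cost assumption for VE discretizations, which I would simply take as a hypothesis, as the statement does. Third, I would note that $M_\ell\ge1$ (rounding up to integers), because rounding up to integers adds at most $\sum_\ell N_\ell\lesssim N_L$, which is dominated by the bounds above. Accuracy $\mathcal O(h_L^p)$ follows from Theorem~\ref{thm:mlmc_Mopt} with $\mu_s=p$ (since $p\le s$).
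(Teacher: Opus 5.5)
Your proposal is correct and follows the paper's proof essentially step for step. Both bound the cost by $\sum_{\ell=1}^L M_\ell N_\ell$, insert $M_\ell\sim 2^{2p(L-\ell)}\ell^{2+2\varepsilon}$ and $N_\ell\sim 2^{2\ell}p^2$, and split the sum into the case $p=1$, where the polynomial sum gives $L^{3+2\varepsilon}$, and the case $p\ge 2$, where the geometric sum is dominated by the coarsest level and yields $p^2 2^{2pL}\sim N_L^p p^{2(1-p)}$ (the paper writes this as $N_L\cdot\mathcal{O}(2^{2L(p-1)})$, the same computation). Your extra bookkeeping---absorbing the level-$(\ell-1)$ solve, rounding $M_\ell$ up to integers, and checking that the geometric-sum constant is uniform in $p$---is sound and slightly more careful than the paper, which omits these points.
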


\begin{proof}
Choosing $M_\ell$ as in~\eqref{eq:Mell_practical}, using~$h_\ell\sim 2^{-\ell}$ and assuming that the complexity of the multigrid algorithm at level $\ell$ is proportional to $N_\ell$, then there holds
\begin{gather}
    \begin{aligned}
        {\mathbb{C}_{MLMC}(L)}
        &\lesssim\sum_{\ell=1}^L M_\ell N_\ell
        \sim \sum_{\ell=1}^L \bigg( h_\ell^p h_L^{-p} \ell^{(1+\varepsilon)} \bigg)^2 N_\ell
        \sim \sum_{\ell=1}^L \bigg( 2^{p(L-\ell)} \ell^{(1+\varepsilon)} \bigg)^2 2^{2\ell} p^2 \\ &
        = p^2 2^{2L}\sum_{\ell=1}^L 2^{2p(L-\ell)} 2^{2(\ell-L)}\ell^{2(1+\varepsilon)}
        \label{eq:sum_l}
        \sim N_L\sum_{\ell=1}^L 2^{(\ell-L)(2-2p)}\ell^{2(1+\varepsilon)}
    \end{aligned}
\end{gather}
We now estimate the sum in \eqref{eq:sum_l}.
\begin{itemize}
    \item For $p=1$, there holds
    \[
    \sum_{\ell=1}^L 2^{(\ell-L)(2-2p)}\ell^{2(1+\varepsilon)}
    = \sum_{\ell=1}^L \ell^{2(1+\varepsilon)} 
    = \sum_{\ell'=0}^{L-1} (L-\ell')^{2(1+\varepsilon)} 
    = \mathcal{O}(L^{3+2\varepsilon}).
    \]
    \item For $p\geq 2$, then $2-2p$ is negative and we call it $-k$ with $k>0$. Then, there holds
    \begin{gather*}
        \begin{aligned}
            &\sum_{\ell=1}^L 2^{(\ell-L)(2-2p)}\ell^{2(1+\varepsilon)}
            = \sum_{\ell=1}^L 2^{k(L-\ell)}\ell^{2(1+\varepsilon)}
            = \sum_{\ell'=0}^{L-1} (2^{k})^{\ell'} (L-\ell')^{2(1+\varepsilon)}\\
            &= \mathcal{O}(2^{kL})
            =\mathcal{O}(2^{2L(p-1)})
            = \mathcal{O}((N_L p^{-2})^{(p-1)}).
        \end{aligned}
    \end{gather*}
\end{itemize}
Hence, \eqref{eq:h-mlmc-cost} follows.
\end{proof}

From Theorem~\ref{thm:h-mlmc-cost}, the computational advantage of the MLMC-VE method with respect to the MC-VE method on the finest mesh~$\mathcal{T}_L$ is evident. Indeed, choosing~$M$ as in~\eqref{eq:M_practical} and using $N_L=\mathcal O(2^{2L} p^2)$, the complexity~$\mathbb{C}_{MC}(L)$ for computing the MC-VE estimator of $\E[u]$ to ensure an accuracy of order~$\mathcal{O}(h_L^{p})$ 
is estimated as
\begin{eqnarray}
    {\mathbb{C}_{MC}}(L)=M N_L
    = \mathcal{O}(p^{2p} h_L^{-2p}) N_L
    = \mathcal{O}(N_L^{p+1})
    \nonumber
\end{eqnarray}
Comparing~\eqref{eq:h-mlmc-cost} in Theorem~\ref{thm:h-mlmc-cost} with the above result indicates that the MLMC-VE method achieves the same accuracy of MC-VE with a substantial reduction of the complexity in the considered case $p\leq s$.

For the QoI, following similar steps as above, we can show that the practical choice of the number of samples~$M_\ell$ in~\eqref{eq:Mell_practical} for the computation of the MLMC-VE estimator of $\E[\Q(u)]$ results into a computational complexity  $\mathcal{O}(N_L^{2p} p^{2(1-2p)})$, independently of whether~$p=1$ or~$p\ge 2$, which is substantially smaller than the complexity of computing the MC-VE estimator $\mc{M}[\Q(u)]$ on the finest mesh~$\mathcal{T}_L$ with the practical choice of $M$ as in~\eqref{eq:M_practical}, which is~$\mathcal O(N_L^{2p+1})$.

\subsection{Verification of the error estimates for the MLMC-VE method}
\label{sec:verification_test_hMLMC}

Let us consider problem~\eqref{eq:strong_pde} defined on $D=(0,1)\times(0,1)\subset\R^2$ with constant forcing term $f(\omega,x_1,x_2)=1$ and random diffusion coefficient $a(\omega,x_1,x_2)= 5 + x_1 + x_2 + \big(\frac{8}{\pi^2}\big)^{2.5}Y(\omega)\sin\big(\frac{\pi}{4}(x_1+1)\big) \sin(\frac{\pi}{4}(x_2+1)\big)$, where $Y\sim\mathcal{U}([-1,1])$ is a uniformly distributed random variable. This example is taken from \cite[Section 6]{BarthSchwabZollinger2011}.
With the aim of approximating $\E[u]$ by means of the MLMC-VE estimator, 
we consider a sequence of nested meshes made of square elements with side length~$1/2^\ell$, ($h_\ell=\frac{\sqrt{2}}{2^{\ell+1}}$), for $\ell=1,\ldots,6$.
In Figure~\ref{fig:NumberSamples}, we show the number of samples~$M_\ell$ per mesh level~$\ell$ required to achieve $\|\E[u]-\mlmc[u]\|_{L^2(\Omega,{\Vb})}=\mathcal{O}(h_L^{p})$
for~$L=6$, as predicted by Theorem~\ref{thm:h-mlmc-cost}. 
We recall that~$\mlmc[u]=\sum_{\ell=1}^L \mc{M_\ell}[w_\ell]$, see~\eqref{eq:mlmc_vem_u}. In particular, Figure~\ref{fig:NumberSamples} (left) displays in semilog scale the number of samples per level
\[
\bigg\{M_\ell=\bigg(h_\ell^p\,h_L^{-p}\ell^{(1+\varepsilon)}\bigg)^2,\, \ell=1,\ldots,6\bigg\}
\] 
needed to compute the MLMC estimator~$\mlmc[u]$ 
with $L=6$ and $\varepsilon=1e-10$, for different values of the order~$p=1,2,3$. In Figure~\ref{fig:NumberSamples} (right), we plot in semilog scale the total number of samples $\sum_{\ell=1}^L M_\ell$ versus the maximum level~$L$ for $L=1,\ldots,6$.
As expected, the number of samples increases as the maximum level $L$ increases, and the rate of increase grows with the 
order~$p$.  
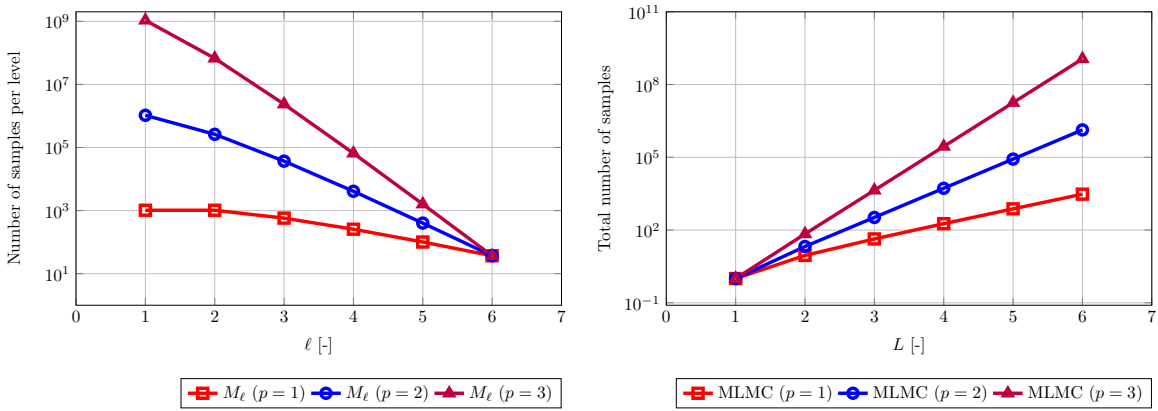
\begin{figure}[!htbp]
    \begin{subfigure}[b]{0.45\textwidth} 
          \resizebox{\textwidth}{!}{\pgfplotsset{
  log x ticks with fixed point/.style={
      xticklabel={
        \pgfkeys{/pgf/fpu=true}
        \pgfmathparse{exp(\tick)}%
        \pgfmathprintnumber[fixed  zerofill, precision=2]{\pgfmathresult}
        \pgfkeys{/pgf/fpu=false}
      }
  }
}

\begin{tikzpicture}

\begin{axis}[%
width=3.875in,
height=2.36in,
at={(2.6in,1.099in)},
scale only axis,
xmin=0,
xmax=7,
xminorticks=true,
xlabel = {$\ell$ [-]},
ylabel = {Number of samples per level},
ymode=log,
ymin=1,
ymax=2e9,
yminorticks=true,
axis background/.style={fill=white},
title style={font=\bfseries},
xmajorgrids,
ymajorgrids,
legend columns=3,
legend style={
        at={(1,-0.3)},
        anchor=east}
]

\addplot [color=red, mark=square,mark size=2.9pt, line width=2.0pt]
  table[row sep=crcr]{%
1   1025\\                    
2   1025\\
3   577\\
4   257\\
5   101\\
6   37\\
 };
\addlegendentry{$M_\ell$ ($p=1$)}

\addplot [color=blue, mark=o,mark size=2.9pt, line width=2.0pt]
table[row sep=crcr]{%
1   1048577\\                    
2   262145\\
3   36865\\
4   4097\\
5   401\\
6   37\\
 };
\addlegendentry{$M_\ell$ ($p=2$)}

\addplot [color=purple, mark=triangle,mark size=2.9pt, line width=2.0pt]
table[row sep=crcr]{%
1   1073741831\\
2   67108865\\
3   2359297\\
4   65537\\
5   1601\\
6   37\\
};
\addlegendentry{$M_\ell$ ($p=3$)}

\end{axis}
\end{tikzpicture}%

\pgfplotsset{
    every mark/.append style={solid},
}}
    \end{subfigure}
    \begin{subfigure}[b]{0.45\textwidth} 
          \resizebox{\textwidth}{!}{\pgfplotsset{
  log x ticks with fixed point/.style={
      xticklabel={
        \pgfkeys{/pgf/fpu=true}
        \pgfmathparse{exp(\tick)}%
        \pgfmathprintnumber[fixed  zerofill, precision=2]{\pgfmathresult}
        \pgfkeys{/pgf/fpu=false}
      }
  }
}

\begin{tikzpicture}

\begin{axis}[%
width=3.875in,
height=2.36in,
at={(2.6in,1.099in)},
scale only axis,
xmin=0  ,
xmax=7,
xminorticks=true,
xlabel = {$L$ [-]},
ylabel = {Total number of samples},
ymode=log,
ymin=0,
ymax=1e11,
yminorticks=true,
axis background/.style={fill=white},
title style={font=\bfseries},
xmajorgrids,
ymajorgrids,
legend columns=3,
legend style={
        at={(1,-0.3)},
        anchor=east}
]

\addplot [color=red, mark=square, mark size=2.9pt, mark options=solid, style = solid, line width=2.0pt]
  table[row sep=crcr]{%
1   1\\
2   9\\
3   43\\
4   183\\
5   749\\
6   3022\\
 };
\addlegendentry{MLMC ($p=1$)}

\addplot [color=blue, mark=o, mark size=2.9pt, mark options=solid, style = solid, line width=2.0pt]
  table[row sep=crcr]{%
1   1\\
2   21\\
3   331\\
4   5283\\
5   84509\\
6   1352122\\
 };
\addlegendentry{MLMC ($p=2$)}

\addplot [color=purple, mark=triangle, mark size=2.9pt, mark options=solid, style = solid, line width=2.0pt]
  table[row sep=crcr]{%
1   1\\
2   69\\
3   4363\\
4   279123\\
5   17863709\\
6   1143277162\\
 };
\addlegendentry{MLMC ($p=3$)}

\end{axis}
\end{tikzpicture}%

\pgfplotsset{
    every mark/.append style={solid},
}}
    \end{subfigure}
    \caption{(Left) Number of samples per level $\{M_\ell\}_{\ell=1}^6$ needed to compute the MLMC-VE estimator $\mlmc{[u]}=\sum_{\ell=1}^L \mc{M_\ell}[w_\ell]$ with $L=6$ according to~\eqref{eq:Mell_practical} with $\varepsilon=1e-10$. (Right) Total number of samples $\sum_{\ell=1}^LM_\ell$ needed to compute the MLMC estimator $\mlmc[u]$ for increasing $L=2,\ldots,6$. Both figures are in semilog scale.}
    \label{fig:NumberSamples}
\end{figure}

For the orders 
$p=1,2$, we compute the MLMC-VE estimator~$\mlmc[u]$ 
for increasing values of $L$ ($L=1,\ldots,5$ for $p=1$ and $L=1,\ldots,4$ for $p=2$, respectively). Since the analytical solution $\E[u]$ is unknown, for each $p$, we consider, as the reference solution, 
the MLMC-VE estimator with the maximum 
level $L+1$ (i.e., maximum level~$6$ 
for $p=1$ and maximum level~$5$ 
for $p=2$).
In Figure~\ref{fig:errors-mlmch} (left), we plot in loglog scale the $H^1(D)$ norm of the errors of the MC-VE method and MLMC-VE method versus $h_L$. The theoretical estimates obtained in Sections~\ref{sec:mcvem} and~\ref{sec:MLMC-VE} are confirmed (see Proposition~\ref{prop:mcvem} and Remark~\ref{rem:mc_Mopt} for the MC-VE method, and Theorem~\ref{thm:mlmc_Mopt} for the~MLMC-VE method). Indeed, the plots clearly show that both the MC-VE and MLMC-VE errors converge with order~$\mathcal O(h_L^p)$:
\[
\|\E[u]-\mc{M}[\Pi^\nabla_{h,p} u_h]\|_{H^1(D)}=\mathcal O(h_{L}^p), \qquad
\|\E[u]-\mlmc[u]\|_{H^1(D)}=\mathcal O(h_L^p).
\]
%
Figure~\ref{fig:errors-mlmch} (right) reports the MLMC-VE errors versus the maximum level $L$ in semilog scale, for~$L=1,\ldots,6$. 
The results shows exponential decay in~$L$ with approximate slope of $0.8$ for~$p=1$ and $1.5$ for $p=2$.
In the same setting, to assess the relative accuracy of MLMC-VE and MC-VE, we fix several levels of computational complexity and plot in loglog scale the corresponding errors for each method in Figure~\ref{fig:errors-mlmch-vs-L-compl}. We recall that, with the choices and the notation of Section~\ref{sec:implementation_choices}, the computational complexity~${\mathbb{C}}_{MLMC}(L_{MLMC})$ for MLCM with maximum level~$L_{MLCM}$ is, up to a constant,~$\sum_{\ell=1}^{L_{MLMC}}M_{\ell}N_{\ell}$, and the computational complexity~${\mathbb{C}}_{MC}(L_{MC})$ for MC at level~$L_{MC}$ is~$MN_{L_{MC}}$.
For a fixed value of~${\mathbb{C}}_{MLMC}$, the accuracy of the MLMC-VE method improves as $p$ increases. In contrast, for a fixed~${\mathbb{C}}_{MC}$, the MC-VE method produces solutions with a level of accuracy independent of~$p$. More importantly, at a given computational complexity, MLMC-VE consistently achieves higher accuracy, yielding errors roughly one order of magnitude smaller than those of MC-VE.

Finally, in Figure~\ref{fig:Qoi-errors-mlmch} (left), we plot in loglog scale the MLMC-VE error on $\E[\Q(u)]$ for the choice $\Q(u)=\int_D u\,\dx$. The number of samples~$M_\ell$ per level  is fixed according to~\eqref{eq:Mell_practical}, 
and the predicted rate of convergence 
\[
|\E[\Q(u)]-\mlmc[\Q(u)]|=\mathcal O(h_L^{2p}).
\]
is confirmed. The same errors are plotted in loglog scale versus the computational complexity in Figure~\ref{fig:Qoi-errors-mlmch} (right), where it is again evident that MLMC-VE consistently achieves higher accuracy than MC-VE.

\begin{figure}[!htbp]
    \begin{center}
    \begin{subfigure}[b]{0.45\textwidth} 
          \resizebox{\textwidth}{!}{\pgfplotsset{
  log x ticks with fixed point/.style={
      xticklabel={
        \pgfkeys{/pgf/fpu=true}
        \pgfmathparse{exp(\tick)}%
        \pgfmathprintnumber[fixed  zerofill, precision=2]{\pgfmathresult}
        \pgfkeys{/pgf/fpu=false}
      }
  }
}

\begin{tikzpicture}

\begin{axis}[%
width=3.875in,
height=2.36in,
at={(2.6in,1.099in)},
scale only axis,
xmode=log,
xmin=0.01,
xmax=1,
xminorticks=true,
xlabel = {$h_L$ [-]},
ylabel = {Errors on $\E[u]$},
ymode=log,
ymin=1e-5,
ymax=1e-1,
yminorticks=true,
axis background/.style={fill=white},
title style={font=\bfseries},
xmajorgrids,
ymajorgrids,
legend columns=3,
legend style={
        at={(1,-0.3)},
        anchor=east}
]


\addplot [color=orange, mark=square, mark size=2.9pt, mark options=solid, style = solid, line width=2.0pt]
  table[row sep=crcr]{%
3.535534e-01   0.0180035725514754\\
1.767767e-01   0.00809842419951163\\
8.838835e-02   0.00349574600016957\\
4.419417e-02   0.00149465798647225\\
2.209709e-02   0.000663505872640797\\
 };
\addlegendentry{MLMC ($p=1$)}
                           
\addplot [color=magenta, mark=o, mark size=2.9pt, mark options=solid, style = solid, line width=2.0pt]
  table[row sep=crcr]{%
3.535534e-01  0.00427289635214014\\
1.767767e-01  0.00107419559081763\\
8.838835e-02  0.000251349867446481\\
4.419417e-02  6.86298740958629e-05\\
 };
\addlegendentry{MLMC ($p=2$)}


\addplot [color=blue, mark=square, mark size=2.9pt, mark options=solid, style = dashed, line width=2.0pt]
  table[row sep=crcr]{%
3.535534e-01   0.0174672648643881\\
1.767767e-01   0.00799412093192411\\
8.838835e-02   0.00331383391423862\\
4.419417e-02   0.00151485867456923\\
2.209709e-02   0.000668469505026143\\
 };
\addlegendentry{MC ($p=1$)}

\addplot [color=teal, mark=o, mark size=2.9pt, mark options=solid, style = dashed, line width=2.0pt]
  table[row sep=crcr]{%
3.535534e-01   0.00376765379416038\\
1.767767e-01   0.00095800592256433\\
8.838835e-02   0.000245370897454095 \\
 };
\addlegendentry{MC ($p=2$)}

\addplot [color=black, mark=square, mark size=2.9pt, mark options=solid, style = dashdotdotted, line width=2.0pt]
  table[row sep=crcr]{%
3.535534e-01   3.535534e-02\\
1.767767e-01   1.767767e-02\\
8.838835e-02   8.838835e-03\\
4.419417e-02   4.419417e-03\\
2.209709e-02   2.209709e-03\\
 };
\addlegendentry{$h_L$}
                           
\addplot [color=black, mark=o, mark size=2.9pt, mark options=solid, style = dashdotdotted, line width=2.0pt]
  table[row sep=crcr]{%
3.535534e-01  0.0125000000628095 \\
1.767767e-01  0.00312500001570237 \\
8.838835e-02  0.000781250003925593\\
4.419417e-02  0.000195312500981398\\
 };
\addlegendentry{$h_L^2$}

\end{axis}
\end{tikzpicture}%
}
    \end{subfigure}%
    \begin{subfigure}[b]{0.45\textwidth} 
          \resizebox{\textwidth}{!}{\pgfplotsset{
  log x ticks with fixed point/.style={
      xticklabel={
        \pgfkeys{/pgf/fpu=true}
        \pgfmathparse{exp(\tick)}%
        \pgfmathprintnumber[fixed  zerofill, precision=2]{\pgfmathresult}
        \pgfkeys{/pgf/fpu=false}
      }
  }
}

\begin{tikzpicture}

\begin{axis}[%
width=3.875in,
height=2.36in,
at={(2.6in,1.099in)},
scale only axis,
xmin=0,
xmax=6,
xminorticks=true,
xlabel = {$L$ [-]},
ylabel = {Errors on $\E[u]$},
ymode=log,
ymin=1e-5,
ymax=1e-1,
yminorticks=true,
axis background/.style={fill=white},
title style={font=\bfseries},
xmajorgrids,
ymajorgrids,
legend columns=3,
legend style={
        at={(1,-0.3)},
        anchor=east}
]


\addplot [color=orange, mark=square, mark size=2.9pt, mark options=solid, style = solid, line width=2.0pt]
  table[row sep=crcr]{%
1   0.0180035725514754\\
2   0.00809842419951163\\
3   0.00349574600016957\\
4   0.00149465798647225\\
5   0.000663505872640797\\
 };
\addlegendentry{MLMC ($p=1$)}
                           
\addplot [color=magenta, mark=o, mark size=2.9pt, mark options=solid, style = solid, line width=2.0pt]
  table[row sep=crcr]{%
1  0.00427289635214014\\
2  0.00107419559081763\\
3  0.000251349867446481\\
4  6.86298740958629e-05\\
 };
\addlegendentry{MLMC ($p=2$)}


\addplot [color=black, mark=square, mark size=2.9pt, mark options=solid, style = dashdotdotted, line width=2.0pt]
  table[row sep=crcr]{%
1   0.0449328964117222\\
2   0.0201896517994655\\
3   0.00907179532894125\\
4   0.00407622039783662\\
5   0.00183156388887342\\
 };
\addlegendentry{$L^{-0.8}$}
                           
\addplot [color=black, mark=o, mark size=2.9pt, mark options=solid, style = dashdotdotted, line width=2.0pt]
  table[row sep=crcr]{%
1   0.0022313016014843\\
2   0.00049787068367864\\
3   0.000111089965382423\\
4   2.47875217666636e-05\\
 };
\addlegendentry{$L^{-1.5}$}

\end{axis}
\end{tikzpicture}%
}
    \end{subfigure}%
    \end{center}
    \caption{
    (Left) MC-VE and MLMC-VE errors on $\E[u]$ in the $H^1(D)$ norm, plotted versus $h_{L}$ in loglog scale. (Right) MLMC-VE errors on $\E[u]$ in the $H^1(D)$ norm, plotted versus $L$ in semilog scale.
    }
    \label{fig:errors-mlmch}
\end{figure}
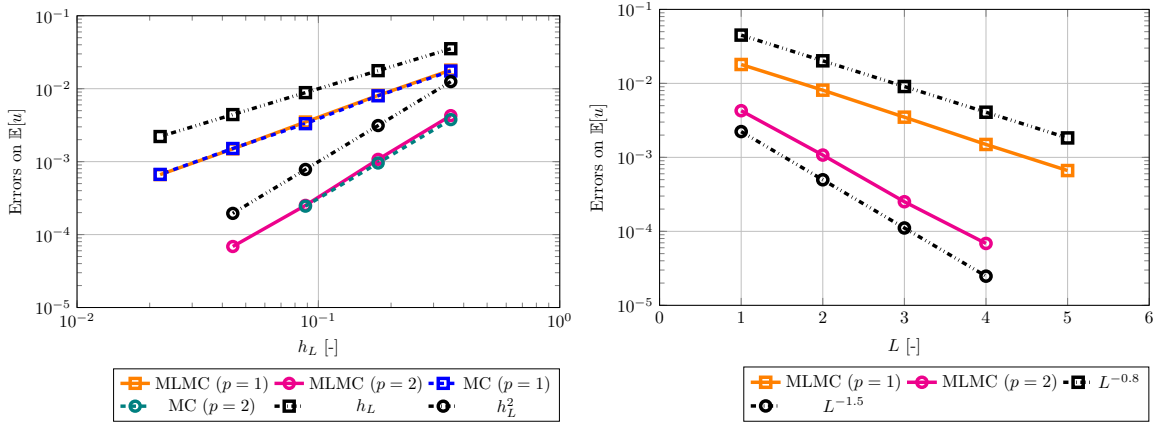

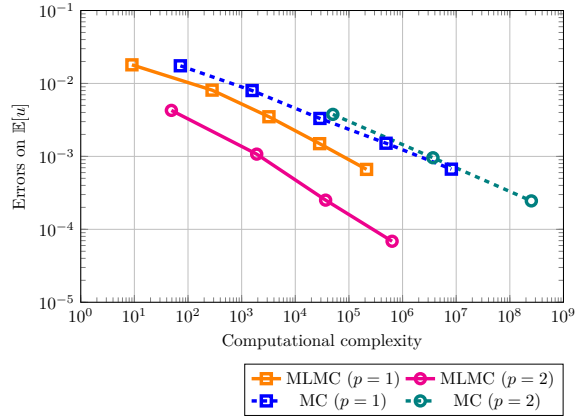
\begin{figure}[!htbp]
\begin{center}
    \begin{subfigure}[b]{0.45\textwidth} 
          \resizebox{\textwidth}{!}{\pgfplotsset{
  log x ticks with fixed point/.style={
      xticklabel={
        \pgfkeys{/pgf/fpu=true}
        \pgfmathparse{exp(\tick)}%
        \pgfmathprintnumber[fixed  zerofill, precision=2]{\pgfmathresult}
        \pgfkeys{/pgf/fpu=false}
      }
  }
}

\begin{tikzpicture}

\begin{axis}[%
width=3.875in,
height=2.36in,
at={(2.6in,1.099in)},
scale only axis,
ymode=log,
ymin=1e-5,
ymax=1e-1,
yminorticks=true,
ylabel = {Errors on $\mathbb{E}[u]$},
xlabel = {Computational complexity},
xmode=log,
xmin=1,
xmax=1e9,
xminorticks=true,
axis background/.style={fill=white},
title style={font=\bfseries},
xmajorgrids,
ymajorgrids,
legend columns=2,
legend style={
        at={(1,-0.3)},
        anchor=east}
]


\addplot [color=orange, mark=square, mark size=2.9pt, mark options=solid, style = solid, line width=2.0pt]
  table[row sep=crcr]{%
9  0.0180035725514754\\
281  0.00809842419951163\\
3227    0.00349574600016957\\
28423  0.00149465798647225\\
213181  0.000663505872640797\\
 };
\addlegendentry{MLMC ($p=1$)}                    

\addplot [color=magenta, mark=o, mark size=2.9pt, mark options=solid, style = solid, line width=2.0pt]
  table[row sep=crcr]{%
49  0.00427289635214014\\
1909  0.00107419559081763\\
36779  0.000251349867446481\\
638147  6.86298740958629e-05\\ 
};
\addlegendentry{MLMC ($p=2$)}


\addplot [color=blue, mark=square,mark size=2.9pt, style = dashed, mark options=solid, line width=2.0pt]
  table[row sep=crcr]{%
  72      0.0174672648643881\\
  1568     0.00799412093192411 \\
  28800    0.00331383391423862\\
  492032    0.00151485867456923\\
  8128512   0.000668469505026143\\
  };
\addlegendentry{MC ($p=1$)} 

\addplot [color=teal, mark=o,mark size=2.9pt, style = dashed, mark options=solid, line width=2.0pt]
  table[row sep=crcr]{%
  50176  0.00376765379416038\\
  3686400  0.00095800592256433\\
  251920384  0.000245370897454095\\
 }; 
\addlegendentry{MC ($p=2$)}

\end{axis}
\end{tikzpicture}%
}
    \end{subfigure}%
    \caption{
    MLMC-VE and MC-VE errors on $\E[u]$ in the $H^1(D)$ norm versus the computational complexity in loglog scale. 
    }
    \label{fig:errors-mlmch-vs-L-compl}
\end{center}
\end{figure}
\begin{figure}[!htbp]
\begin{center}
    \begin{subfigure}[b]{0.46\textwidth} 
          \resizebox{\textwidth}{!}{\pgfplotsset{
  log x ticks with fixed point/.style={
      xticklabel={
        \pgfkeys{/pgf/fpu=true}
        \pgfmathparse{exp(\tick)}%
        \pgfmathprintnumber[fixed  zerofill, precision=2]{\pgfmathresult}
        \pgfkeys{/pgf/fpu=false}
      }
  }
}

\begin{tikzpicture}

\begin{axis}[%
width=3.875in,
height=2.36in,
at={(2.6in,1.099in)},
scale only axis,
xmode=log,
xmin=0.01,
xmax=1,
xminorticks=true,
xlabel = {$h_L$ [-]},
ylabel = {Errors on $\E[\mathcal Q(u)]$},
ymode=log,
ymin=1e-8,
ymax=1e-1,
yminorticks=true,
axis background/.style={fill=white},
title style={font=\bfseries},
xmajorgrids,
ymajorgrids,
legend columns=3,
legend style={
        at={(1,-0.3)},
        anchor=east}
]


\addplot [color=orange, mark=square, mark size=2.9pt, mark options=solid, style = solid, line width=2.0pt]
  table[row sep=crcr]{%
3.535534e-01   0.00303291114927012\\
1.767767e-01   0.0011287009447539\\
8.838835e-02   0.000283653108032327\\
4.419417e-02   5.68111870706822e-05\\
 };                     
\addlegendentry{MLMC ($p=1$)}

\addplot [color=magenta, mark=o, mark size=2.9pt, mark options=solid, style = solid, line width=2.0pt]
  table[row sep=crcr]{%
3.535534e-01   0.000454773874067745\\
1.767767e-01   1.12311520423687e-05\\
8.838835e-02   4.42394662222845e-08 \\
 };                     
\addlegendentry{MLMC ($p=2$)}

\addplot [color=blue, mark=square, mark size=2.9pt, mark options=solid, style = dashed, line width=2.0pt]
  table[row sep=crcr]{%
3.535534e-01   0.00290675961905482\\
1.767767e-01   0.00103537253622074\\
8.838835e-02   0.000282130865441006\\
4.419417e-02   5.84599853465703e-05\\
 };
\addlegendentry{MC ($p=1$)}

\addplot [color=teal, mark=o, mark size=2.9pt, mark options=solid, style = dashed, line width=2.0pt]
  table[row sep=crcr]{%
3.535534e-01   0.000178886328696763\\
1.767767e-01   1.76225525332632e-05\\
8.838835e-02   1.34199065677815e-06\\
 };
\addlegendentry{MC ($p=2$)}

\addplot [color=black, mark=square, mark size=2.9pt, mark options=solid, style = dashdotdotted, line width=2.0pt]
  table[row sep=crcr]{%
3.535534e-01  0.0125000000628095 \\
1.767767e-01  0.00312500001570237 \\
8.838835e-02  0.000781250003925593\\
4.419417e-02  0.000195312500981398\\
 };
\addlegendentry{$h_L^2$}
                           
\addplot [color=black, mark=o, mark size=2.9pt, mark options=solid, style = dashdotdotted, line width=2.0pt]
  table[row sep=crcr]{%
3.535534e-01  0.00156250001570237 \\
1.767767e-01  0.0000976562509813983 \\
8.838835e-02  6.10351568633739e-06\\
 };
\addlegendentry{$h_L^4$}

\end{axis}
\end{tikzpicture}%
}
    \end{subfigure}%
    \begin{subfigure}[b]{0.46\textwidth} 
          \resizebox{\textwidth}{!}{\pgfplotsset{
  log x ticks with fixed point/.style={
      xticklabel={
        \pgfkeys{/pgf/fpu=true}
        \pgfmathparse{exp(\tick)}%
        \pgfmathprintnumber[fixed  zerofill, precision=2]{\pgfmathresult}
        \pgfkeys{/pgf/fpu=false}
      }
  }
}

\begin{tikzpicture}

\begin{axis}[%
width=3.875in,
height=2.36in,
at={(2.6in,1.099in)},
scale only axis,
xmode=log,
xmin=1,
xmax=1e12,
xminorticks=true,
xlabel = {Computational complexity},
ylabel = {Errors on $\E[\mathcal Q(u)]$},
ymode=log,
ymin=1e-8,
ymax=1e-2,
yminorticks=true,
axis background/.style={fill=white},
title style={font=\bfseries},
xmajorgrids,
ymajorgrids,
legend columns=3,
legend style={
        at={(1,-0.3)},
        anchor=east}
]


\addplot [color=orange, mark=square, mark size=2.9pt, mark options=solid, style = solid, line width=2.0pt]
  table[row sep=crcr]{%
9   0.00303291114927012\\
389   0.0011287009447539\\
7739   0.000283653108032327\\
136051   5.68111870706822e-05\\
 };                     
\addlegendentry{MLMC ($p=1$)}

\addplot [color=magenta, mark=o, mark size=2.9pt, mark options=solid, style = solid, line width=2.0pt]
  table[row sep=crcr]{%
49   0.000454773874067745\\
13669   1.12311520423687e-05\\
3451499   4.42394662222845e-08 \\
 };                     
\addlegendentry{MLMC ($p=2$)}

\addplot [color=blue, mark=square, mark size=2.9pt, mark options=solid, style = dashed, line width=2.0pt]
  table[row sep=crcr]{%
576   0.00290675961905482\\
50176   0.00103537253622074\\
3686400   0.000282130865441006\\
251920384   5.84599853465703e-05\\
 };
\addlegendentry{MC ($p=1$)}

\addplot [color=teal, mark=o, mark size=2.9pt, mark options=solid, style = dashed, line width=2.0pt]
  table[row sep=crcr]{%
200704   0.000178886328696763\\
235929600  1.76225525332632e-05\\
257966468411   1.34199065677815e-06\\
 };
\addlegendentry{MC ($p=2$)}

\end{axis}
\end{tikzpicture}%
}
    \end{subfigure}%
    \caption{
    (Left) MLMC-VEM errors on $\E[\Q(u)]$ plotted versus $h_{L}$ in loglog scale. (Right) MLMC-VEM errors on $\E[\Q(u)]$ versus the computational complexity in loglog scale. $\Q(u)=\int_D u\, \dx$.
    }
    \label{fig:Qoi-errors-mlmch}
\end{center}
\end{figure}
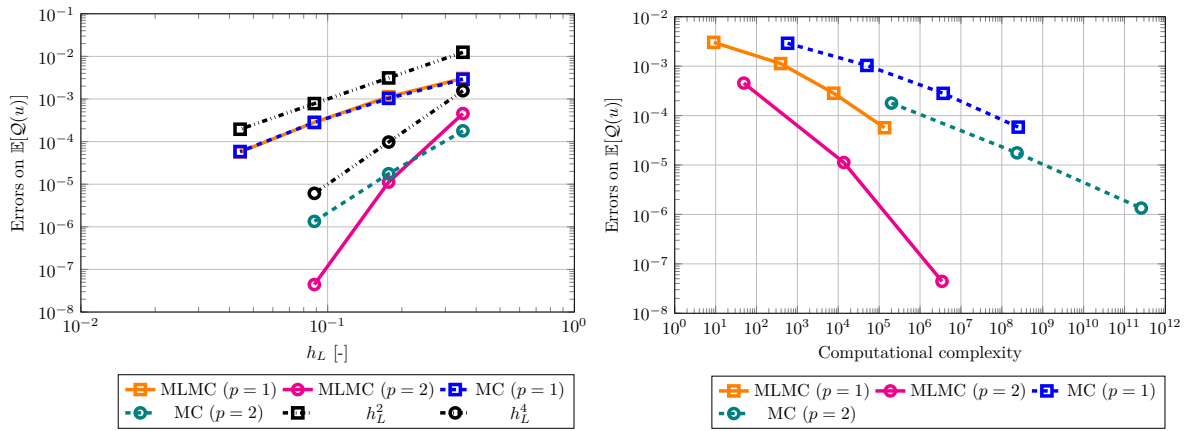

\FloatBarrier

\subsection{Validation test}
\label{sec:emilia}

In this section, we present some numerical results to demonstrate the practical capabilities of the proposed scheme. We consider problem~\eqref{eq:strong_pde} defined over the rectangular domain $D=(0,4)\times(0,1)$, which is split into seven non-overlapping regions $\{D_r\}_{r=1}^7$ (see Figure~\ref{fig:regions}) modeling rock strata in the subsurface. We take the forcing term constant and
equal to~$1$. The random diffusion coefficient is defined as 
$a(\omega,x)=\sum_{r=1}^7\chi_{D_r}(x)Y_r(\omega)$,
where~$\{Y_r\}_{r=1}^7$ are uniformly distributed random variables. In particular,  \[
a(\omega,x)|_{D_r}=Y_r(\omega)\in\mathbb{R}\qquad \text{for all $r=1,\ldots,7$},
\]
so that the realizations of the random diffusion coefficient are piecewise constant over the regions, with each local coefficient following a uniform distribution. This example goes beyond the theoretical framework considered in the previous sections, as the solution of problem~\eqref{eq:strong_pde} with a discontinuous diffusion coefficient does not satisfy the regularity assumption $u\in L^p(\Omega,H^{2}(D))$ (see Proposition~\ref{prop:mcvem} and Theorem~\ref{thm:mlmc_u_conv}).
\begin{figure}[!htbp]
\begin{center}
   \begin{subfigure}[b]{0.6\textwidth} 
          \includegraphics[width=\textwidth, height=5cm]{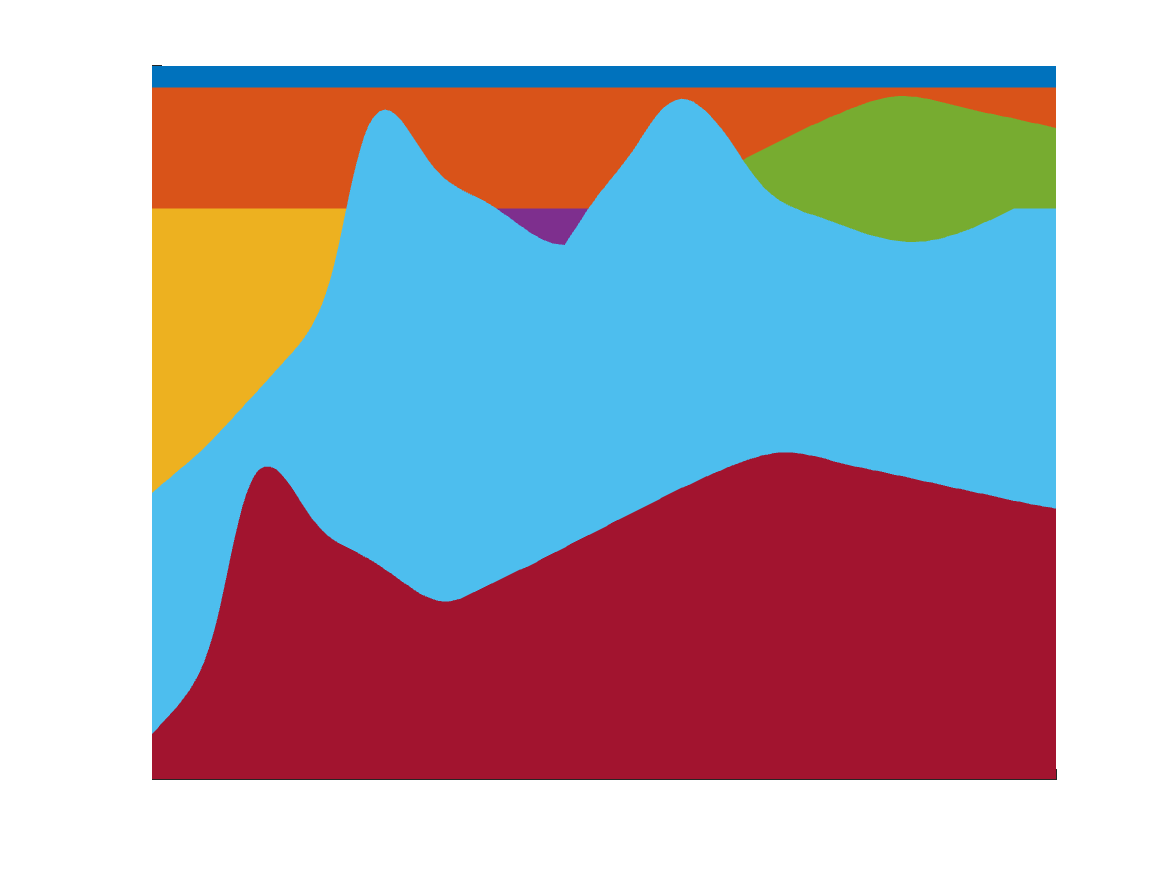}
    \end{subfigure}%
    \caption{Subdivision of the rectangular domain into subregions, as considered in Section~\ref{sec:emilia}.}
    \label{fig:regions}
\end{center}
\end{figure}
\begin{figure}[!htbp]
   \begin{subfigure}[b]{0.5\textwidth}
          \includegraphics[width=\textwidth,height=5cm]{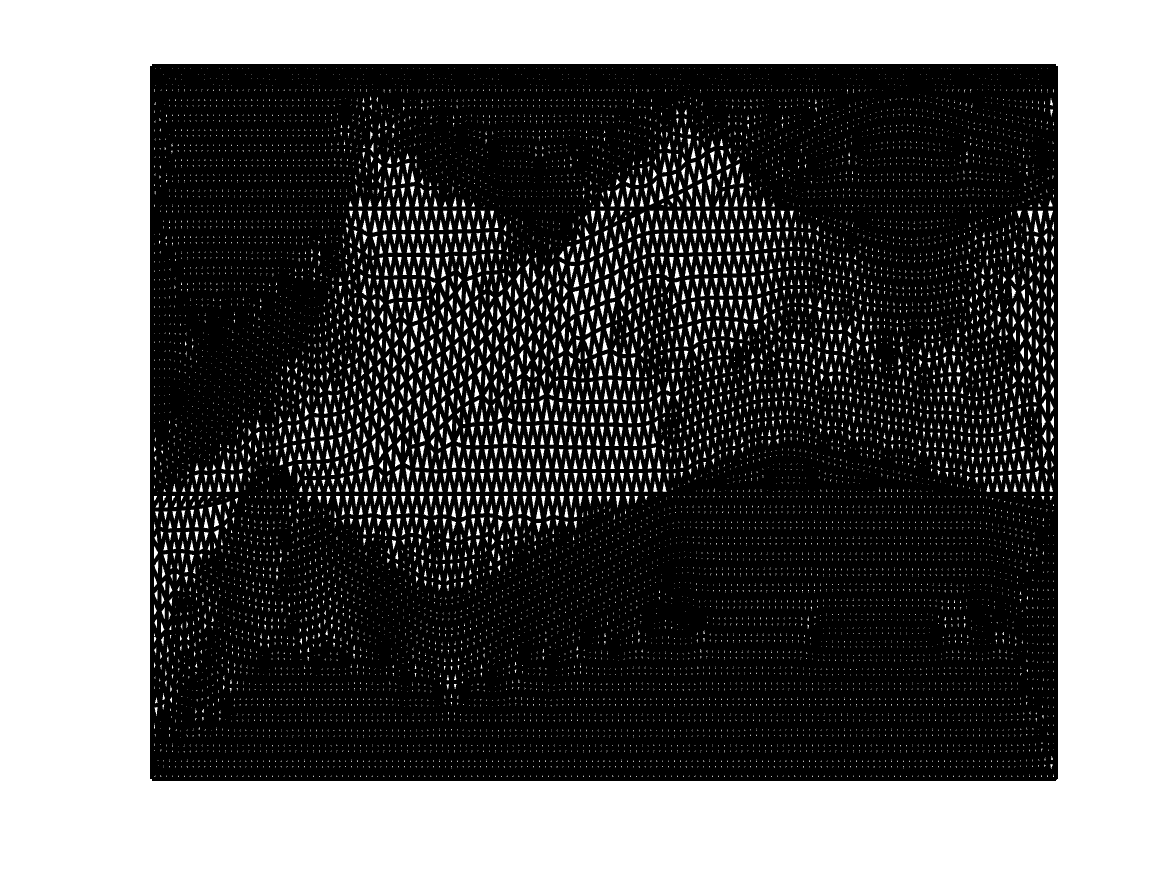}
    \end{subfigure}%
    \begin{subfigure}[b]{0.5\textwidth}
          \includegraphics[width=\textwidth,height=5cm]{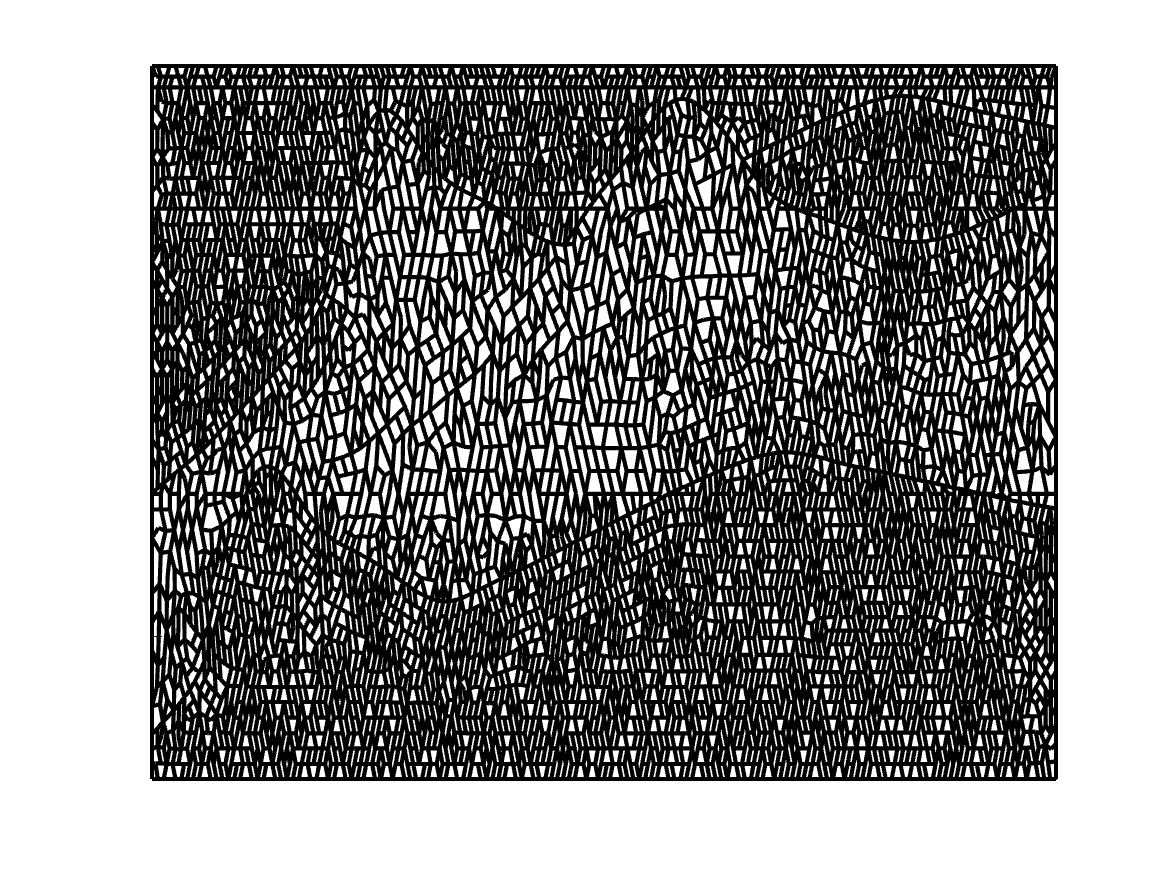}
    \end{subfigure}%
    \\
    \begin{subfigure}[b]{0.5\textwidth}
          \includegraphics[width=\textwidth,height=5cm]{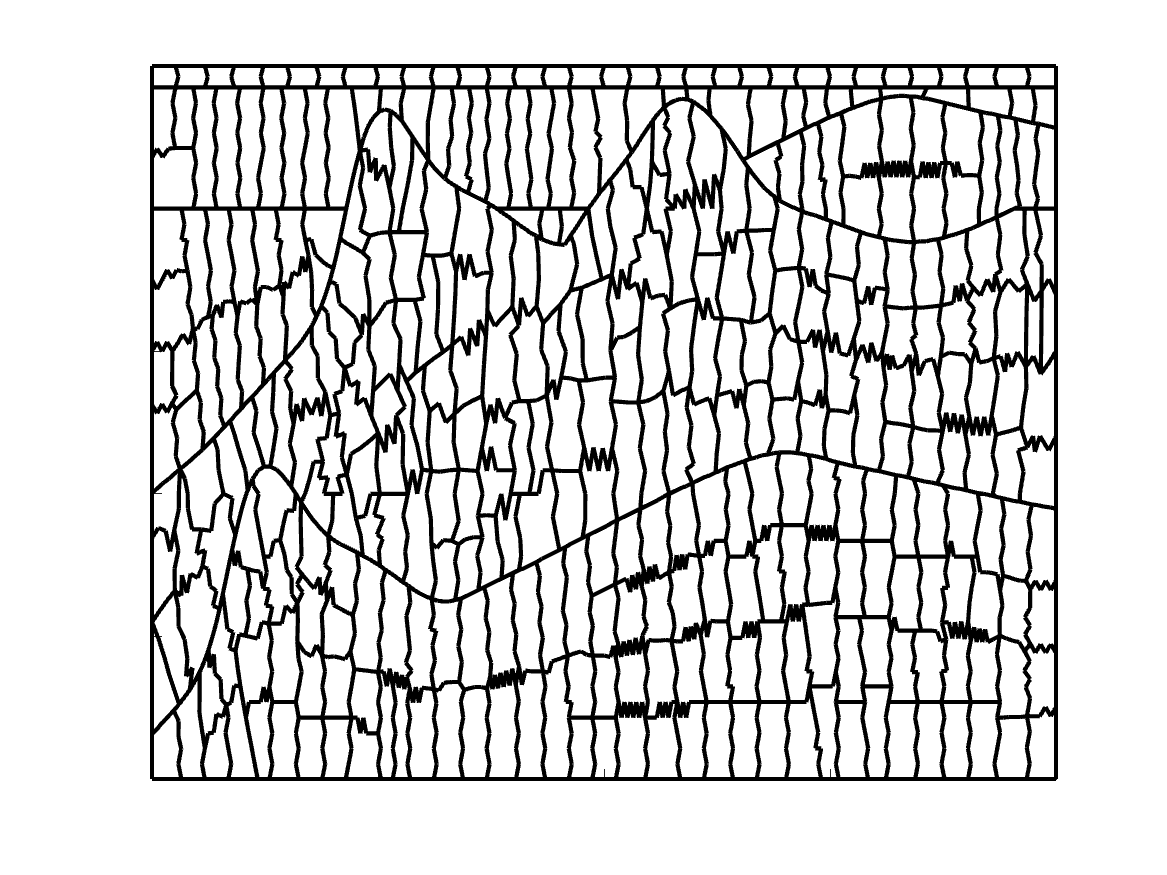}
    \end{subfigure}%
    \begin{subfigure}[b]{0.5\textwidth}
          \includegraphics[width=\textwidth,height=5cm]{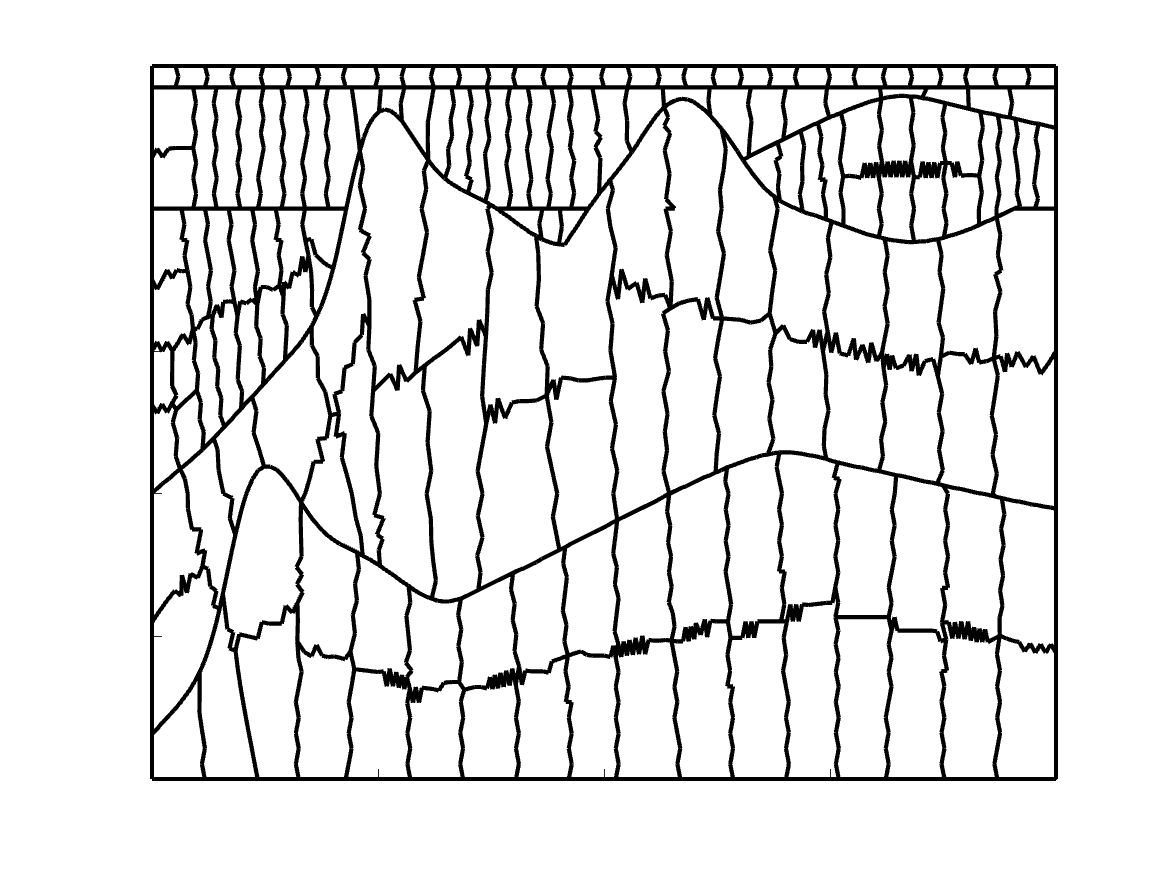}
    \end{subfigure}%
    \caption{Sequence of nested meshes used in the numerical tests of Section~\ref{sec:emilia}, obtained via a mesh-agglomeration strategy from a fine triangular grid shown in the top--right panel. 
    }
    \label{fig:MeshesEmilia}
\end{figure}
The MLMC-VE method is defined on the sequence of four nested polygonal meshes $\{\T_\ell\}_{\ell= 0}^3$ represented in Figure~\ref{fig:MeshesEmilia}. The sequence satisfies~$h_\ell\simeq\frac{h_{\ell-1}}{2}$ and is constructed using a mesh-agglomeration strategy. Specifically, starting from the fine (triangular) mesh~$\ T_L=\T_3$, at each level $\ell=L-1, \ldots,  0$, we recursively group fine-level elements into coarse agglomerates so that each coarse cell is the union of a set of fine cells. For the numerical simulations, we use the algorithm proposed in \cite{ANTONIETTI2024, Antonietti2026} and implemented in \texttt{MAGNET}, an open-source Python library for mesh agglomeration in two and three dimensions based on Graph Neural Networks (see~\cite{antonietti2025magnet}).

\begin{figure}
    \centering
    \includegraphics[width=0.5\textwidth]{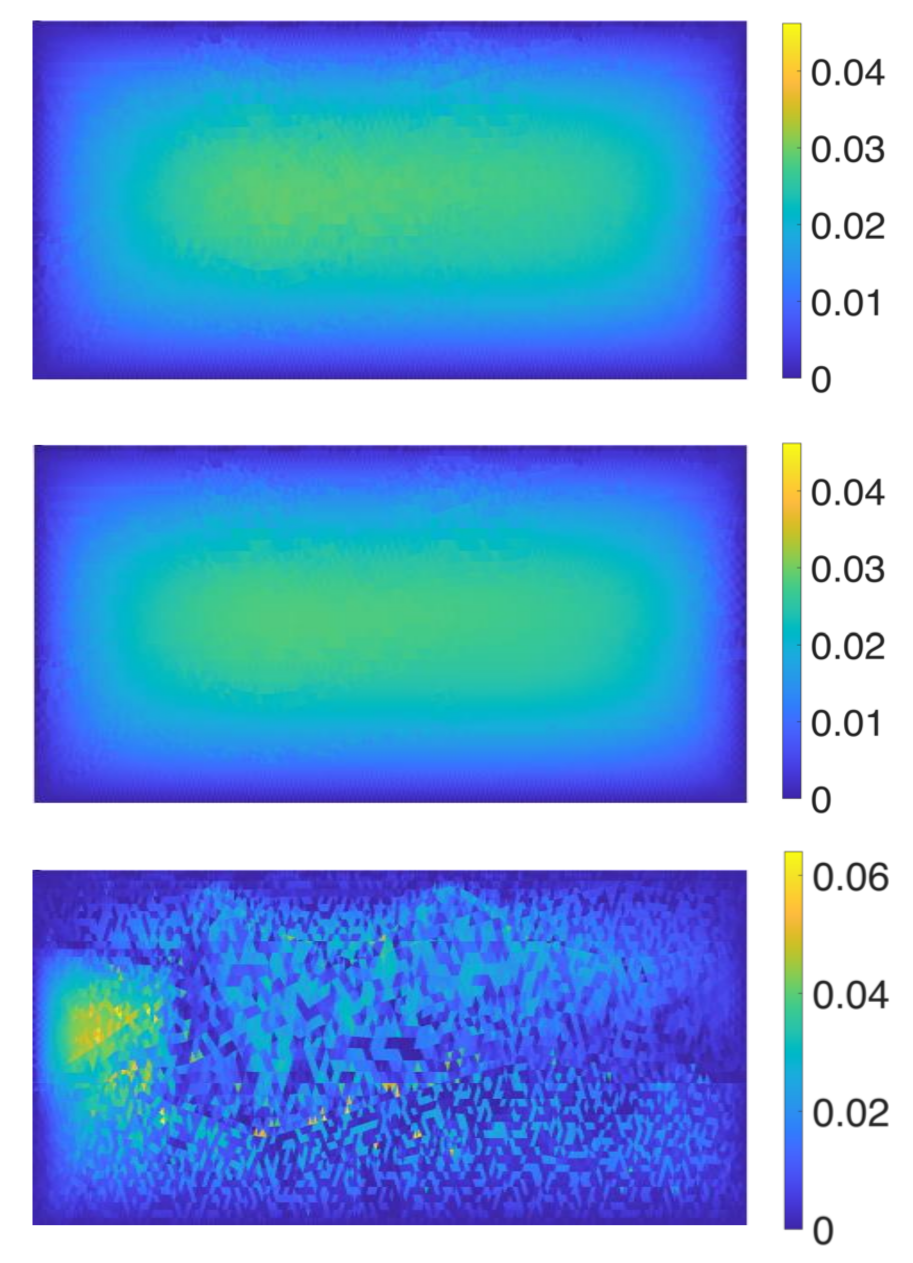}
    \caption{(Top): MLMC-VE approximation of $\E[u]$ for $a(\omega,x)=\sum_{r=1}^7\chi_{D_r}(x)Y_r(\omega)$, where $Y_r\sim\mathcal U([1,10])$ for all $r=1,\ldots,7$.
    (Middle): MC approximation of $\E[u]$.
    (Bottom): Absolute value of the difference of the two approximations divided by the maximum norm of the MLMC-VE approximation.
    }
    \label{fig:aggEmilia1}
\end{figure}


We first consider the case $Y_r\sim\mathcal U([1,10])$ for all $r=1,\ldots,7$, modeling the setting where the diffusivity of all regions is comparable (ranging between 1 and 10), namely, all regions are composed of the same material.
In Figure~\ref{fig:aggEmilia1}, we compare the MLMC estimator (top) with the MC estimator (middle) of $\E[u]$. The pointwise absolute difference is plotted in Figure~\ref{fig:aggEmilia1} (bottom), showing agreement between the two approximations up to the third decimal place. 
The same quantities are then computed for~$Y_r\sim\mathcal U([1,2])$ for all $r\neq 7$ and $Y_7\sim\mathcal U([1,100])$.
This configuration models a case in which the first six regions have comparable diffusivity (varying within the smaller interval~$[1,2]$), whereas the seventh region (the red one in Figure~\ref{fig:regions}) has a much larger diffusivity, ranging from ~$1$ to~$100$. The latter region can therefore be interpreted as a layer made of a different material.
The MLMC and the MC estimators, as well as their pointwise absolute difference, are reported in Figure~\ref{fig:aggEmilia2}. 
Again, we observe agreement between the two approximations to three decimal digits. Moreover, a comparison between  Figures~\ref{fig:aggEmilia1} and~\ref{fig:aggEmilia2} shows that the pointwise profile of the estimators is affected by the range of variation of the random variables $\{Y_r\}_{r=1}^7$, becoming flatter in regions characterized by higher diffusivity.
Finally, we consider the QoI defined as the average over the physical domain~$D$ and compare the MC-VE and MLMC-VE estimators of its expected value. We observe agreement to five decimal digits in both configurations: when the diffusivity of all regions is comparable, and when the first six regions have comparable diffusivity, whereas the seventh has a larger diffusivity.

\begin{figure}
    \centering
    \includegraphics[width=0.5\textwidth]{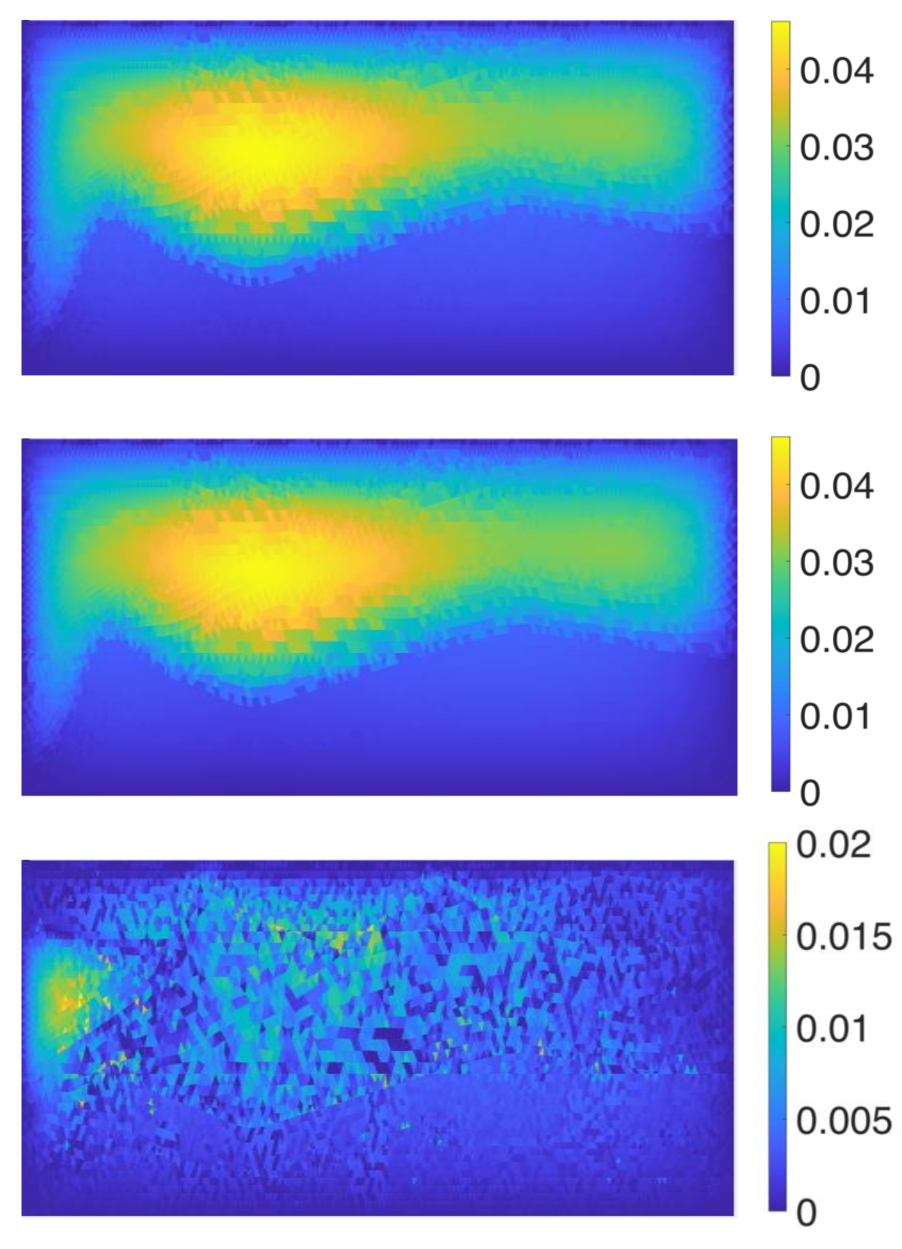}
    \caption{(Top): MLMC-VE approximation of $\E[u]$ for $a(\omega,x)=\sum_{r=1}^7\chi_{D_r}(x)Y_r(\omega)$, where $Y_r\sim\mathcal U([1,2])$ for all $r\neq 7$ and $Y_7\sim\mathcal U([1,100])$.
    (Middle): MC approximation of $\E[u]$.
   (Bottom): Absolute value of the difference of the two approximations divided by the maximum norm of the MLMC-VE approximation.
    }
    \label{fig:aggEmilia2}
\end{figure}

\FloatBarrier

\section{Conclusions}
\label{sec:conclusions}
In this work, we introduced a Monte Carlo Virtual Element method for stochastic elliptic partial differential equations with random diffusion coefficients and proved error estimates for the statistical approximation error of both the solution and relevant (linear) quantities of interest. We also developed and analyzed a Multilevel Monte Carlo Virtual Element method that integrates stochastic sampling with a hierarchy of VE spaces constructed from agglomerated meshes. We proved the convergence of the proposed Multilevel Monte Carlo Virtual Element method and demonstrated that the MLMC-VE strategy yields significant cost reductions compared with the standard Monte Carlo approach.  Numerical experiments demonstrate the theoretical bounds and the computational efficiency of the proposed approach. 
Possible future developments include extending the proposed analysis to the $p$-version of the Virtual Element method
and stochastic multiphysics models with uncertain data. Another promising direction is the investigation of adaptive multilevel strategies to further reduce computational costs.

\section*{Acknowledgments}
The author sincerely thanks Matteo Caldana and Lorenzo Mascotto for their helpful discussions and valuable suggestions during the development of this work.

\bibliographystyle{abbrv}
\bibliography{biblio_new}
\end{document}